\newtheorem{theorem}{Theorem}[section]
\newtheorem{proposition}[theorem]{Proposition}
\newtheorem{lemma}[theorem]{Lemma}
\theoremstyle{definition}
\newtheorem*{definition}{Definition}
\begin{document}

\title{On the Growth of the Wallpaper Groups}
\author{Rostislav Grigorchuk, Cosmas Kravaris}
\date{\today}
\maketitle

\begin{abstract} 
We develop further Cannon's method of cone types for finding the growth function of a group, which can also be used to find the coordination sequences of certain infinite graphs. We then apply this method to compute the growth functions and series of the wallpaper groups (the 2 dimensional crystallographic groups). The paper has a number of illustrating colored figures and tables summarizing the results.
\end{abstract}

\newpage
\pagenumbering{arabic}

\section{Introduction}

Given a locally finite rooted graph $\Gamma = (V, E, o)$ where $o \in V$ is a distinguished vertex (and as usual, V is the set of vertices and E the set of edges), one can consider the sequence:
$$\gamma(n) = \gamma_{\;\Gamma,o}(n) = |\{v\in V : d(o,v)\leq n\}|,$$
where $d(u,v)$ is the combinatorial distance in $\Gamma$, i.e. the number of edges in the shortest path connecting u and v in $\Gamma$.
We consider the case when $\Gamma$ is an undirected graph, so $d(u,v)$ is a well defined metric.
When $\Gamma$ is an infinite graph, the growth rate of $\gamma(n)$ when $n \to \infty$ is an important characteristic of the graph. The function $\gamma(n)$ is called the \textbf{growth function} or \textbf{coordination sequence} of $\Gamma$, and it appears in 
the study of lattices (\cite{Conway_Sloane_Coordination_Seq} and \cite{Coordinationsequencesforlattices}), 
the study of  crystallographic objects such as Zeolites in chemistry (\cite{Brunner_Zeolite_Chemistry_Paper} and \cite{Zeolite_LTA_Structure_Chemistry_Paper}), 
and also in geometric group theory \cite{dalaHarpe:GGTbook}.

Although $\gamma$ depends on the root $o$, the rate of growth of $\gamma(n)$ does not depend on the choice of $o$. In many cases it is convenient to represent growth functions by their corresponding growth series
$$ \Gamma(z) = \sum_{n=0}^{\infty}\gamma(n)z^n.$$
This is especially useful when the series represent a rational function, $\Gamma(z)=P(z)/Q(z)$ (where P and Q are polynomials). In this case, the growth rate of $\gamma(n)$ is polynomial or exponential, depending on the positive root of $Q(z)$, $z_0$, which is closest to zero along with its multiplicity when $z_0 = 1$. If $P(z)$ and $Q(z)$ are known, then classical analysis allows us to represent $\gamma(n)$ by a finite set of polynomials or exponential functions.

The papers of Conway and Sloane \cite{Conway_Sloane_Coordination_Seq}, Baake and Grimm \cite{BaakeGrimm::CoordinationSeq}, Goodman-Strauss and Sloane \cite{Coloring_Book_Approach}, Bacher, de la Harpe and Venkov \cite{Bacher_deLaHarpe_Venkov_Paper} give numerous examples for computations of the growth series and show the non-triviality of this task even in relatively easy situations.

The use of growth functions in group theory comes via the classical notion of a Cayley graph since the growth function of a group $G$ w.r.t a given generating set $S= \{s_1,...s_m\}$ coincides with the growth function on the corresponding Cayley graph $\Gamma(G,S)$. 
For groups, the growth rate of $\gamma(n)$ can be polynomial, exponential or of intermediate type (between polynomial and exponential). 
The question about the existence of groups of intermediate growth was raised by Milnor \cite{milnor:problem} and answered by the first author \cite{grigorch:milnor83}. 
For such groups, the growth series is transcendental.

There are several important classes of groups for which it is known that $\Gamma(z)$ is rational for any system of generators. 
For instance, this is true for virtually abelian groups (i.e. groups containing an abelian subgroup of finite index) (\cite{Benson_Paper} and \cite{Klarner_Paper_2}) and for Gromov hyperbolic groups \cite{HyperbolicGroupsBook}. 
However, given a group $G$ with a system of generators $S$ it is usually not easy to compute $\Gamma(z)$, even when its rationality is known. 
In addition to their relation to growth, another reason for computing the growth series of groups is that, for certain groups 
(e.g. of Coxeter type \cite{Serre_Paper_EulerCharacteristic} 
and for closed surface groups \cite{Cannon_EulerCharacteristic_Paper})
$\Gamma(1) = 1/\chi(G)$, where $\chi (G)$ is the Euler characteristic of the group. See \cite{Brazil_Paper} for further discussion on this topic.

An important class of virtually abelian groups are the \textbf{crystallographic groups}. 
By a classical result of Bieberbach in 1912, it is known that for any dimension d=2,3,..., there are finitely many such groups up to group isomorphism. For instance, when d=2, there are 17 groups (proven by Fedorov in 1891) and when d=3 there are 219 many (up to group isomorphism). 
Planar crystallographic groups (which are also called \textbf{wallpaper groups}) have a natural elegance because of the beauty of their Cayley graphs, presented for instance, in the book of Coxeter and Moser \cite{Coxeter_Moser_Book}. 
Coxeter and Moser choose the most natural system of generators arising from their geometry. 
Nonetheless, there are still alternate systems of generators for which the study of properties (including spectral properties) is important in models of image reconstruction of the human eye \cite{Health_Book_with_Groups} (as was indicated by A. Agrachev).

The graphs presented in the book of Coxeter and Moser are oriented. In the oriented case, one can also define a pseudometric on the graph, and hence also a growth function. 
In \cite{Shutov_Paper}, Shutov presented the growth functions of all 20 oriented graphs (some groups are presented by Coxeter and Moser with more than one system of generators and relators), using the method of the article of Zhuravlev \cite{Zhuravlev_Paper}.
Unfortunately, the expressions for the oriented version of $\gamma(n)$ are claimed to be valid for $n \geq n_0$ for some $n_0$ not indicated in \cite{Shutov_Paper}. 
Also, details of the proofs are missing and the growth of $\Gamma$ based on its orientation does not always coincide with the growth of $\Gamma$ as an undirected graph.

In this article, we rigorously compute $\Gamma(z)$ for all 17 wallpaper groups (20 group presentations in total), but for the non-oriented Cayley graphs determined by the same generating sets as in \cite{Shutov_Paper}. 
In 9 out of the 20 cases, our growth functions coincide with the ones in Shutov's paper, since all the generators are involutions (i.e elements of order 2) and so the edges in the Cayley graph are not oriented. It turns out that when we ignore the orientation, we only have 7 different Cayley graphs.
For the convenience of the reader, we present tables for both the growth series and the growth functions.

Our computations are based on the method of "cone types", first suggested by J. Cannon in \cite{cannon:growth80} and \cite{Cannon:Paper}. One of the important results of Cannon states that the Cayley graph of a classical hyperbolic group for any system of generators has only finitely many cone types (the same result also holds more generally for Gromov hyperbolic groups with basically the same proof).
This implies that the computation of $\Gamma(z)$ can be reduced to solving a system of linear equations, provided that the classification of cone types is known.
There is no such general result for a virtually abelian group even though we know that $\Gamma(z)$ in this case will be rational \cite{Benson_Paper}.
But at least for the wallpaper groups we have finitely many cone types when we correctly define the notion of a cone type.
In sections 3.2-3.5 we introduce several variations of the notion of a cone type and finally arrive at the notion of an extended cone type. 
Then in section 3.6, we show that, in general, extended cone types yield a system equations for the $\Gamma(z)$ and in section 3.8 we provide a systematic approach on how to rigorously arrive at these equations.
In sections 4.1-4.7, we apply this method for the wallpaper groups (in particular, their seven different non oriented Cayley graphs)
We note that in some cases the number of extended cone types can be quite large (in the graph 4.6.12 it is 42) while their shape can be quite complicated. Our results are summarized in section 5.

\section{Preliminaries}

	\subsection{Cayley Graphs}
		We begin by giving a short account of what we mean by a Cayley graph.
		Let $G$ be a  group. A subset $ S \subset G $ is a \textbf{generating set} for $G$
		if every element $ g \in G $ can be expressed as the product of elements in $S$ and their inverses. That is, 
		$$ g = s_{i_1}^{\epsilon_1}  ...  s_{i_n}^{\epsilon_n}, \;\;\;\; (1) $$
		where $ s_{i_j} \in S$ (not necessarily distinct) and $ \epsilon_j \in \{+1,-1\}$  for each $ j = 1 ... n $
		A group is \textbf{finitely generated} if it has a finite generating set.
		From now on, we assume that G is finitely generated.
		
		Now, fixing a generating set $S$, an element $g \in G$ can be represented as in (1) in many different ways,
		and the smallest possible index $n\in \mathbb{N}$ in (1) is called the \textbf{length} of g, denoted by $ |g|_S $. 
		Observe that the length of g depends on our choice of the generating set $S$.
		
		Sometimes we will deal with sets $S \subset G $ which generate G as a \textbf{semigroup}.
		This means every $g \in G$ can be presented in the form
		$$ g = s_{i_1} ...  s_{i_n}, \;\;\;\; (2) $$
		where $ s_{i_j} \in S$ for each $ j = 1 ... n $.
		For example, $\{2,3\}$ is merely a group generating set of Z while $\{-2,3\}$ is a semigroup generated set of Z
		Also note that every group generating set $S$ can be converted into a semigroup generating set
		$ S \cup S^{-1} $ where $ S^{-1} = \{s^{-1}|s\in S\} $. 
		
		Given a group generating set $S \subset G$, the \textbf{(left) Cayley graph} $\Gamma_l = \Gamma_l (G,S)$
		is a directed labeled graph with set of vertices $ G $ and set of oriented edges $ e $ of the form
		$$ e = (g, sg), \;\;\;\; where\;\; g\in G, s\in S. \;\;\;\; (3)$$
		The direction of $e$ is from $g$ to $sg$, while the label of $e$ in (3) is the generator s.
		We shall omit the (G,S) part of
		the notation when the underlying group and generating set are obvious.
		We can similarly define the \textbf{right Cayley graph} $\Gamma_r = \Gamma_r (G,S)$, which has directed edges of the form 
		$e = (g, gs)$ similar to (3).
		
		Furthermore, the group $G$ naturally acts on the structure of $\Gamma_l$ (i.e. on $G$) by right multiplication. 
		Similarly, the action of left multiplication preserves the structure of the right Cayley graph $\Gamma_r$. 
		In fact, there is a natural isomorphism between left and right Cayley graphs; as a result, we will only work
		with left Cayley graph and write $\Gamma$ for $\Gamma_l$.

	\subsection{Directions and Labels}
		
		Our notation will be as follows: 
		From now on, we denote by
		$\Gamma = \Gamma(G,S)$, $\Gamma_{+} = \Gamma_{+}(G,S)$, and $\Gamma_{\#}^{+} = \Gamma_{\#}^{+}(G,S)$ the undirected unlabeled, directed unlabeled
		and directed labeled (left) Caley graphs of $ G $ w.r.t. $ S $ (respectively). 
		Then
		$ Aut \Gamma$ is the group of undirected unlabelled (graph) automorphisms of $ \Gamma$, 
		$ Aut \Gamma^{+}$ is the group of directed unlabelled automorphisms of $ \Gamma^{+}$, and  
		$ Aut \Gamma_{\#}^{+}$ is the group of directed labelled automorphisms of $ \Gamma_{\#}^{+}$.
		Under this notation, we can see that via the group action in the above paragraph, G can be viewed as a subgroup of $ Aut_{\#}^{+} \Gamma$.
		In fact, the group $ G $ is naturally isomorphic to $ Aut \Gamma_{\#}^{+} $. In many situations, it is reasonable to ignore the directions and labels and view $ G $ as a subgroup of the (often) much larger group $Aut \Gamma$.
		
		Observe that $\Gamma$ is $2 | S| $ regular (that is, each vertex is adjacent to $2 |S| $ vertices). Also note that if the identity element
		$1$ is not in $S$, then $\Gamma$ has no loops and if the elements of $S$ are all distinct and not of order 2, then $\Gamma$ has no
		double edges (but there could be situations where it is reasonable to allow repetitions of generators
		 or the presence of the identity in S).
		\begin{figure}[h]
		\begin{center}
			\includegraphics[scale=0.3]{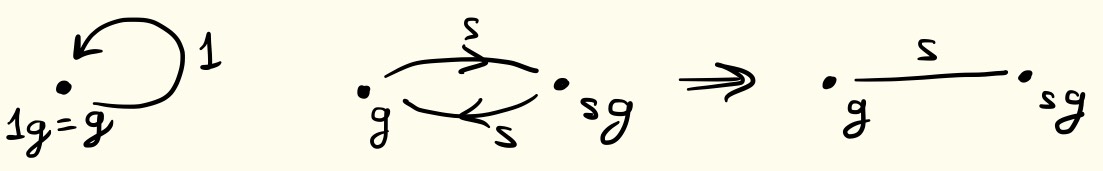}
			\caption{Loops and double edges.}
			\label{fig:2.2.1}
		\end{center}
		\end{figure}
		Another important point is treating generators of order 2. If $s \in S$ has order 2 (then $s = s^{-1}$), so it
		usually convenient to replace a pair of edges $(g,sg), (sg,g)$ with a single non-oriented edge. 
		Several examples of wallpaper groups considered in this article have generating elements of order 2,
		and we will follow the above convention. Finally, if $S = \{s_1, s_2, ... s_k, s_{k+1}, ..., s_m\}$ where all the $s_i$
		are distinct, $s_{k+1}, ..., s_m$ have order 2 while $s_1, s_2, ... s_k$ have order greater than 2,
		then $\Gamma(G,S)$ is a d-regular graph with $d = 2k + (m-k) = k+m$.	

	\subsection{Combinatorial Distance}
		
		For any given connected undirected graph $(V,E)$ with vertices $V$ and edges $E$, we define a \textbf{discrete metric} on $V$ by letting the distance between two vertices $u,v$ be the minimal combinatorial length of paths connecting $u$ to $v$ (the combinatorial length
		of a path p is the number of edges in p). In the case of the undirected graph $\Gamma$, we will denote this metric by $d_\Gamma$. It is not difficult to see that $d_\Gamma(g,h) = |g h^{-1}|$, (where $|\cdot|$ is the length
		of an element w.r.t. the generating set S), $(G,d_\Gamma)$ is a metric space, and $d_\Gamma$ is a right invariant metric (i.e. the action of G on the vertices via right multiplication consists of isometries w.r.t $d_\Gamma$). Note that since $\Gamma$ depends on the generating set $S$, so does the metric $d_\Gamma$.
		
		Sometimes, mathematicians (for instance Cannon in \cite{Cannon:Paper}) extend the metric on the whole graph $\Gamma$ 
		with the edges viewed as the unit interval with the standard metric. This turns $\Gamma$ into a
		path connected metric space.
		
		Another variation is considering a "distance" on the directed Caley graph $\Gamma^{+}$. For $g,h \in G$, the \textbf{discrete directed metric} 
		$d^{+}_\Gamma(g,h)$ is the minimal combinatorial length of an oriented path connecting g with h in $\Gamma^{+}$.
		When no such path exists, the distance in infinite. Observe that although $d^{+}_\Gamma$ satisfies the triangle inequality, 
		it is not always symmetric, and therefore $(G,d^{+}_\Gamma)$ is not a metric space in general.

	\subsection{Growth Function and Growth Series}

	Using the metric $d_\Gamma$ or the length function $|\cdot|$ we define the \textbf{cumulative (or volume) growth function} of $G \; w.r.t.\; S$:
	$$\gamma(n) :=  |\{g \in G : |g| \leq n \}| =  |\{g \in G : d_\Gamma(1,g) \leq n \}| \;\;\;\forall n \in \mathbb{N},$$
	and the \textbf{spherical growth function}:
	$$ \delta(n)  =  |\{g \in G : |g| = n \}| =  |\{g \in G : d_\Gamma(1,g) = n \}| \;\;\;\forall n \in \mathbb{N}.$$	
	Note that we have the obvious relation:
	$\gamma(n) = \sum_{k=0}^{n} \delta(k)$.\\
	We also define the corresponding generating functions of these sequences,
	the \textbf{cumulative (or volume) growth series}:
	$$ \Gamma(z)=\sum_{n=0}^{\infty}\gamma(n)z^{n} \;\;\;\forall z \in \mathbb{C}, $$
	and the \textbf{spherical growth series}:
	$$ \Delta(z) = \sum_{n=0}^{\infty} \delta(n) z^{n} \;\;\;\forall z \in \mathbb{C}.$$
	From the previous relation, we get
	$\Gamma(z) = \dfrac{\Delta(z)}{1-z}$.
	
	A variation of growth considered by Shutov in \cite{Shutov_Paper} uses the discrete directed metric $d^{+}_\Gamma$ instead $d_\Gamma$. The \textbf{directed cumulative growth function} is
	$$\gamma_{+}(n) =  |\{g \in G | d^{+}_\Gamma(1,g) \leq n \}|.$$
	Similarly we may define $\delta_{+}(n),\; \Gamma_{+}(z),\;\Delta_{+}(z)$, with the same relations holding as above.
	
	Note that all the growth functions and growth series depend on the choice of generating set $S$, but the rate of growth as defined by Milnor \cite{milnor:problem} does not depend on $S$.
	
	Finally, the cumulative growth series $\Gamma(z)$ have a radius of convergence
	$$ R =\dfrac{1}{  \limsup_{n \to \infty} \gamma(n)^{1/n}  },$$
	and hence determines an analytic function in a ball of radius $R$ centered at 0. We are mainly interested
	in the rationality of the growth series $\Gamma(z)$ and obtaining  polynomials $P(z)$ and $Q(z)$ such that
	$$ \Gamma(z) = \dfrac{P(z)}{Q(z)}.$$

	\subsection{The Wallpaper Groups and their Cayley Graphs} 

	A wallpaper group (or planar crystallographic group) is a discrete subgroup
	of isometries of the Eucledian plane that contains 2 linearly independent translations.
	To be more precise, a subgroup $G$ of the group of isometries of $\mathbb{R}^2$ is a  \textbf{wallpaper group} 
	if there exists a compact set $W \subset \mathbb{R}^2$, called a fundamental domain, such that the following hold:
	$$\cup_{g \in G}\;gW = \mathbb{R}^2 \;\;\; and \;\;\; \forall g \neq h \in G \;\;\; Area( gW \cap hW) = 0\;,$$
	where "Area" denotes the Lebesgue measure on $\mathbb{R}^2$.\
	
	In 1891, Fedorov proved that there exist only 17 wallpaper groups. 
	Their presentations by generators and relators, and figures of the (oriented) Cayley graphs
	are presented, for instance, in the classic book by Coxeter and Moser \cite{Coxeter_Moser_Book}. 
	It turns out that, up to graph isomorphism, there are only 7 non-oriented Cayley graphs and these graphs can all be realized as tessellations of $\mathbb{R}^2$.
	We denote these 7 graphs by their Schläfli symbols $3^6, 4^4, 6^3, (3.6)^2, 4.8^2, 3.12^2$ and $4.6.12.$ 
	as shown in Figure \ref{fig:2.5.1}, and we denote the 17 wallpaper groups by their Hermann-Mauguin codes shown in the first column of Table \ref{table1}. That way, we follow classical notations used  in \cite{Coxeter_Moser_Book} and \cite{Coloring_Book_Approach} and other places.
	
	\begin{figure}[h]
		\begin{center}
			\includegraphics[scale=0.08]{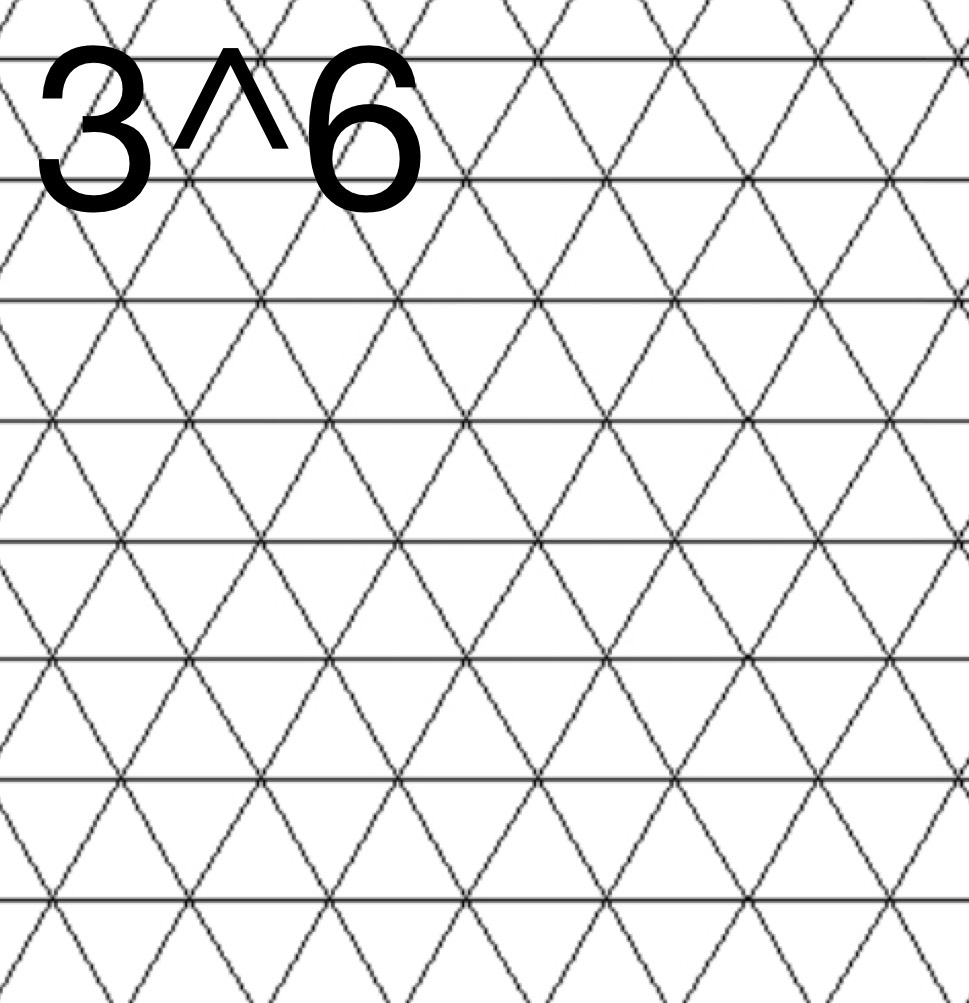}
			\includegraphics[scale=0.08]{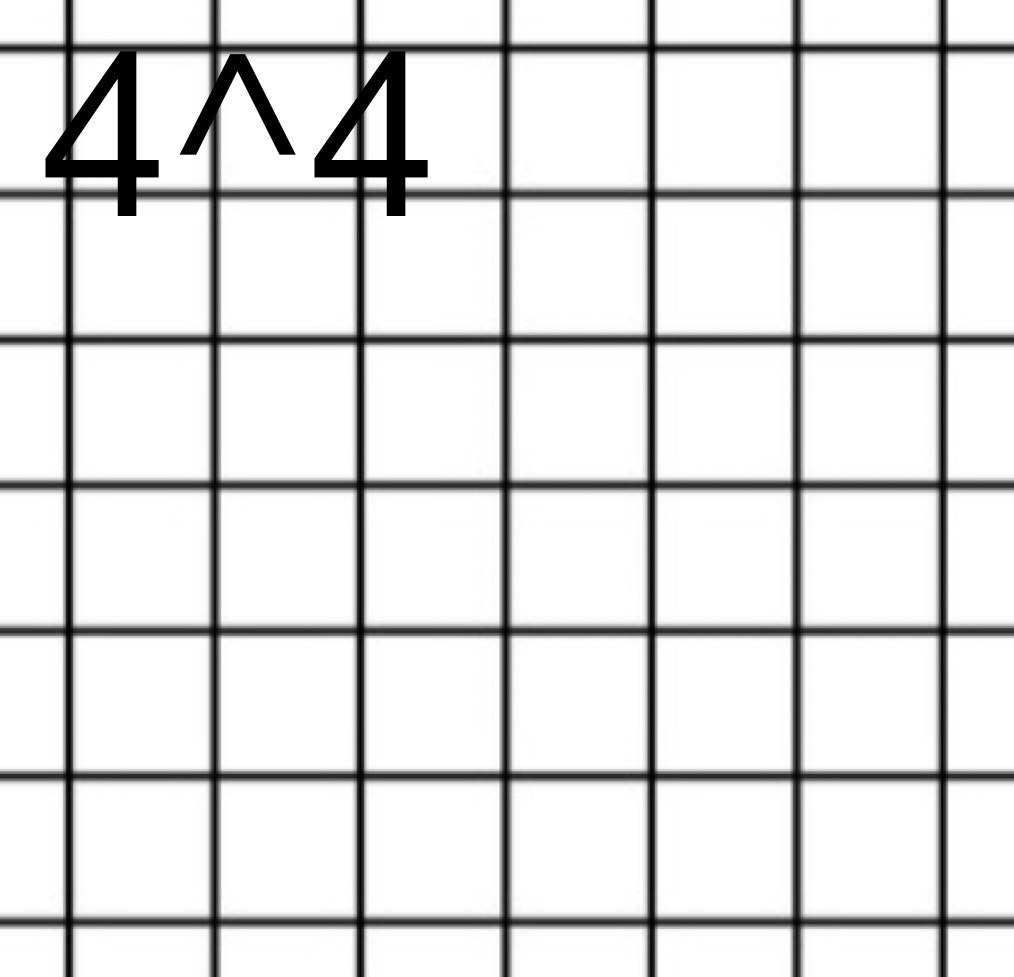}
			\includegraphics[scale=0.08]{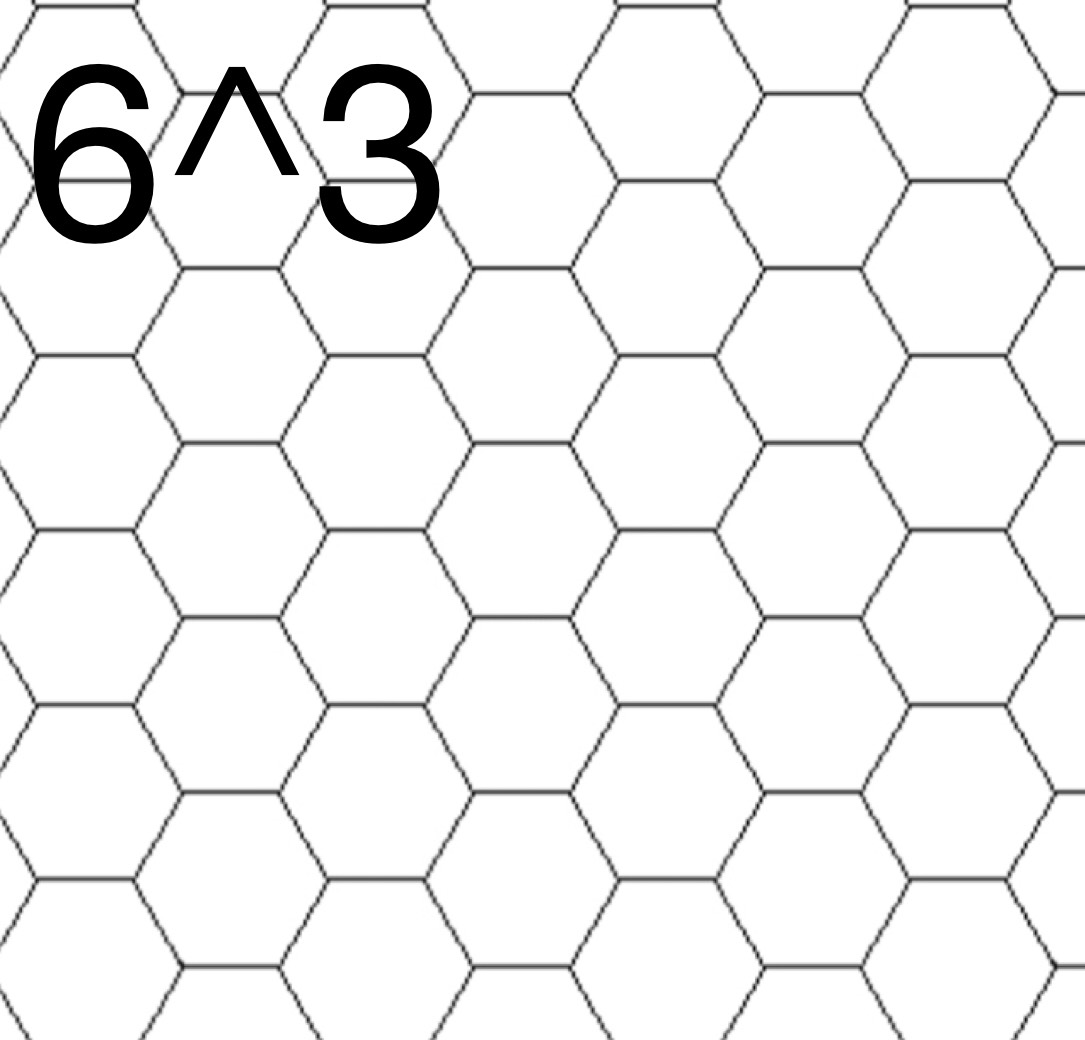}
			\includegraphics[scale=0.09]{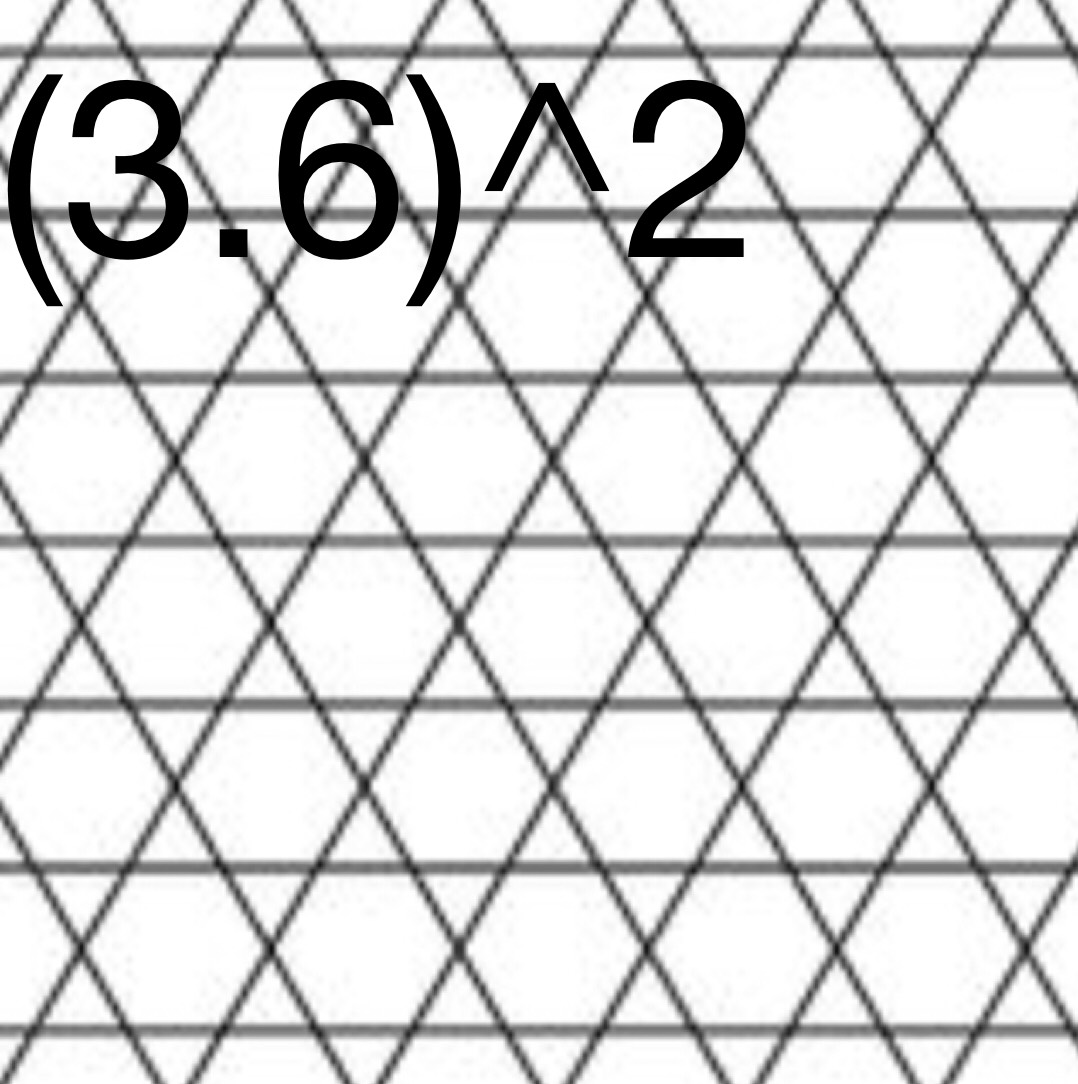}
			\includegraphics[scale=0.09]{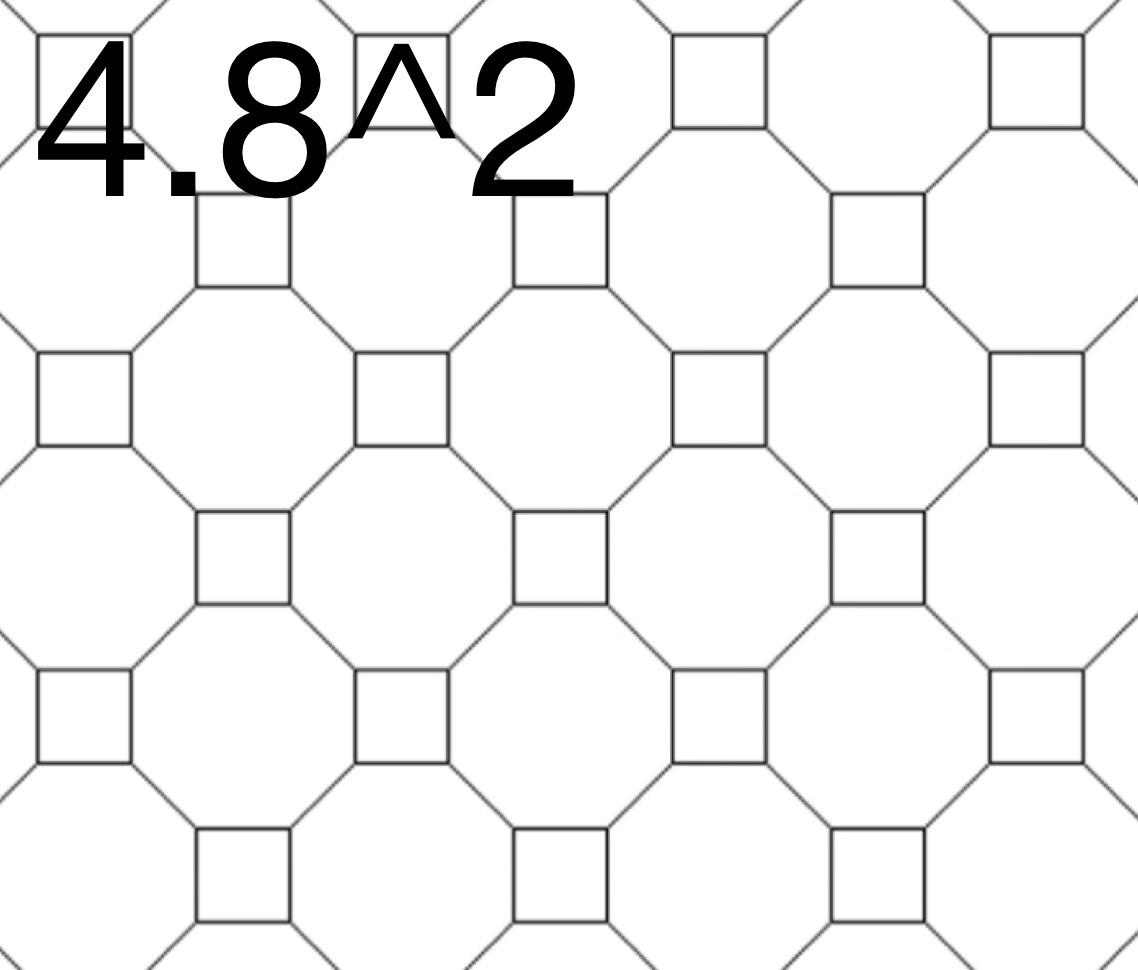}
			\includegraphics[scale=0.1]{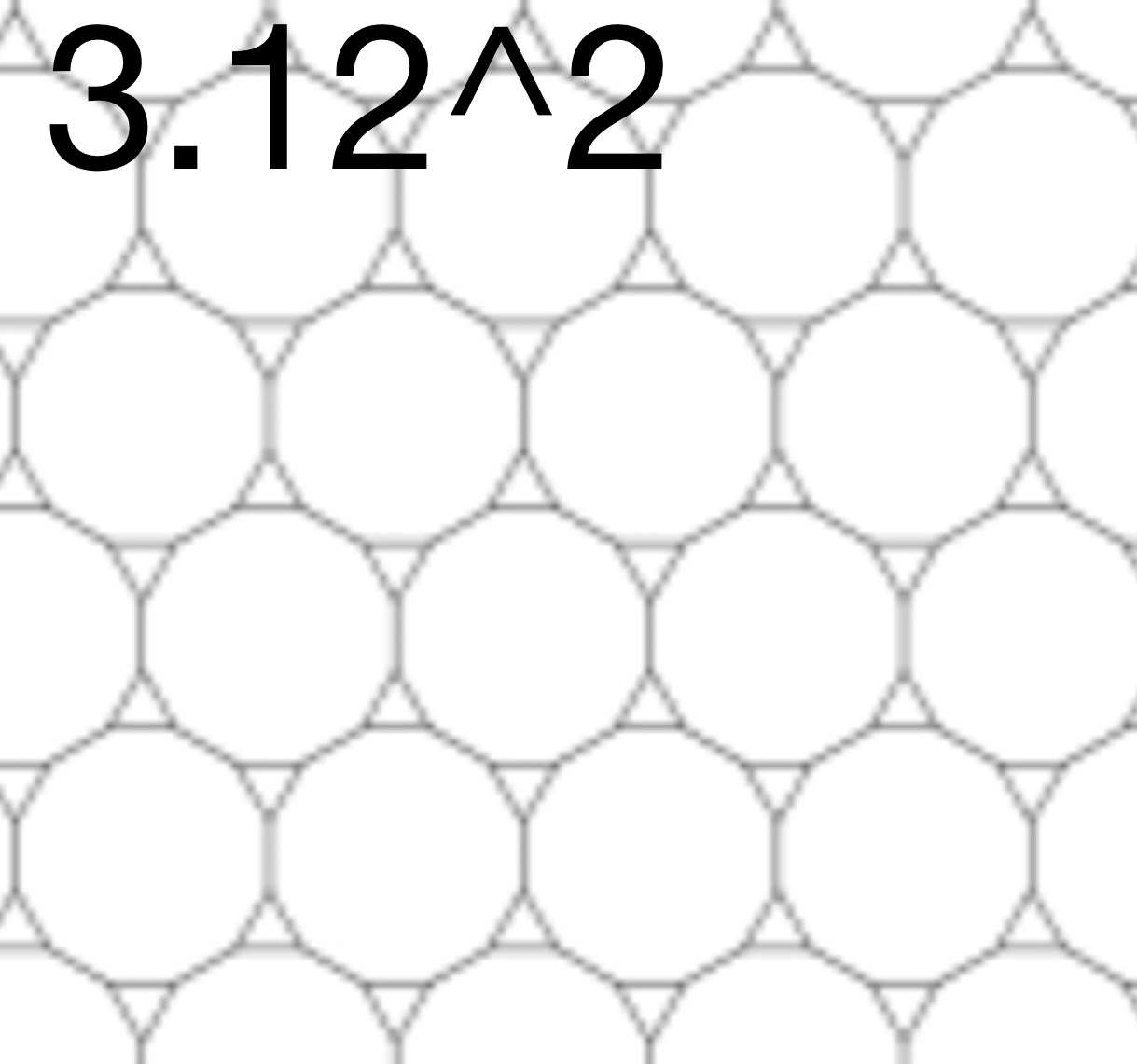}
			\includegraphics[scale=0.1]{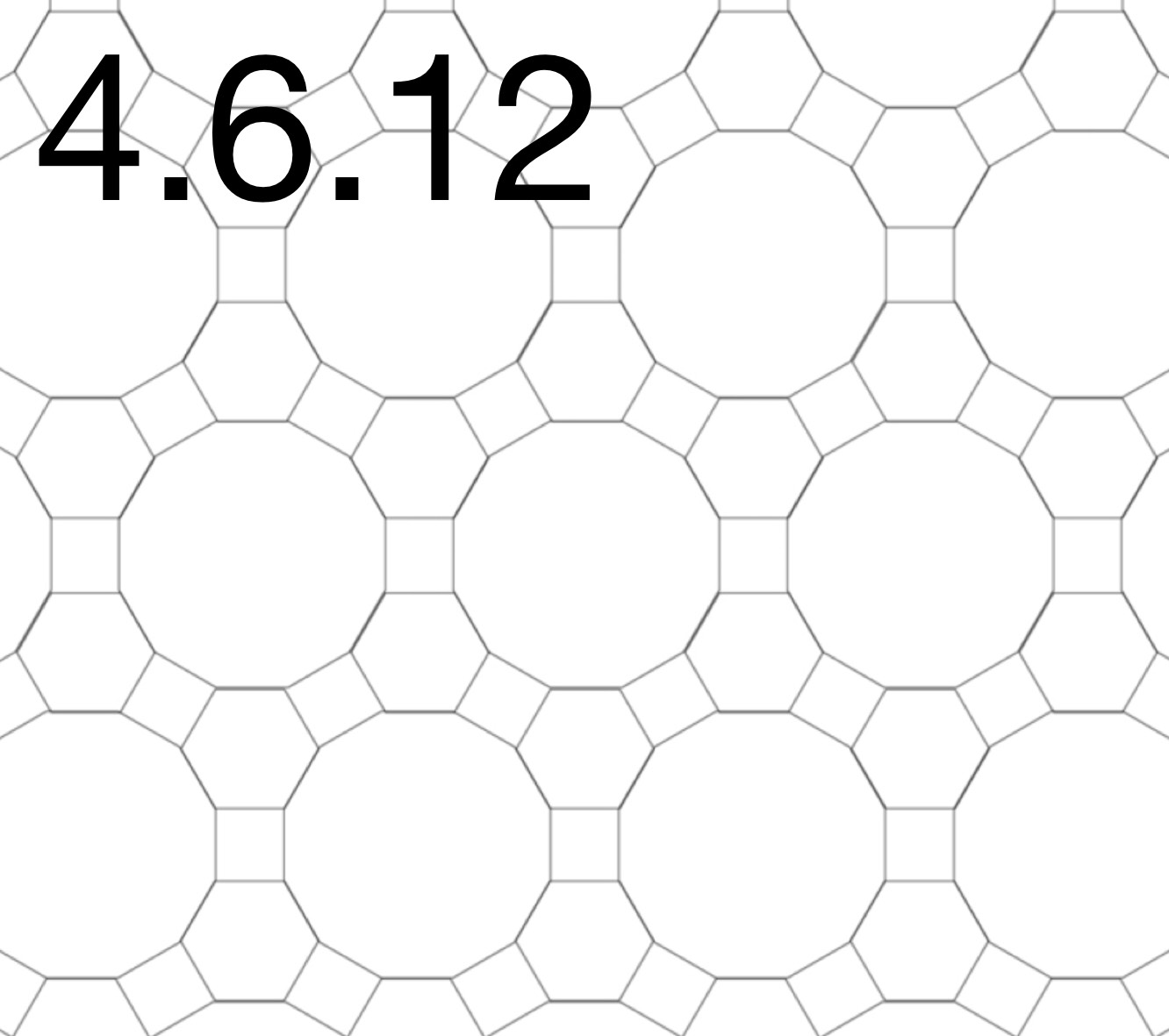}
			\caption{The Seven Cayley graphs of the wallpaper groups.}
			\label{fig:2.5.1}
		\end{center}
	\end{figure}
	We are going to calculate the spherical and cumulative growth functions and growth series of the wallpaper groups.
	Since the growth series of a group depends only on its non-oriented Cayley graph,
	we only need to consider the 7 graphs above. Then Table \ref{table1} shows how to obtain the growth series of each wallpaper group, as long as we know the growth series for each of the 7 graphs. We emphasize that for each of the 17 groups we use generating sets found in \cite{Coxeter_Moser_Book} to present a group via generators and relations. This choice is quite classical, but nonetheless depending on the situation there could be other reasonable systems of generators.
	
	\begin{table}[h]
	\begin{center}
	\begin{tabular}{ | c | c | m{20em} | c |}
	\hline
	\hline
	Group & Generators & Relations & Cayley Graph \\
	\hline
	\hline
	p1   	& $X, Y$ 				& $XY=YX$												& $4^4$\\
	\hline
	p1   	& $X,Y,Z$				& $XYZ = ZYX = 1$										& $3^6$\\
	\hline
	p2   	& $T_1,T_2,T_3$			&$T_1^2 = T_2^2 = T_3^2 = (T_1 T_2 T_3)^2=1$			& $6^3$\\
	\hline
	p2   	& $T_1,T_2,T_3,T_4$ 	&$T_1^2 = T_2^2 = T_3^2 = T_4^2 = T_1 T_2 T_3 T_4 = 1$	& $4^4$\\
	\hline
	pm  	& $Y,R,R'$ 				&$R^2 = R'^2 = 1, RY=YR, R'Y=YR'$						& $4^4$ \\
	\hline
	pg   	& $P,Q$					&$P^2 = Q^2$											& $4^4$ \\
	\hline
	cm   	& $P,R$ 				&$R^2=1,R P^2=P^2 R$									& $6^3$ \\
	\hline
	pmm 	&$R_1,R_2,R_3,R_4$		
	 &$R_1^2 = R_2^2 = R_3^2 = R_4^2  = (R_1 R_2)^2 = (R_2 R_3)^2 = (R_3 R_4)^2 = (R_4 R_1)^2 = 1$
	& $4^4$ \\
	\hline
	pmg 	& $R,T_1,T_2$ 			&$R^2=T_1^2=T_2^2=1, T_1 R T_1 = T_2 R T_2$				& $6^3$ \\
	\hline
	pgg 	& $P,O$					&$(PO)^2=(P^{-1}O)^2=1$									& $4^4$ \\
	\hline
	cmm 	& $R_1,R_2,T$ 			&$R_1^2 = R_2^2 =T^2=(R_1 R_2)^2 = (R_1 T R_2 T)^2 = 1$	& $4.8^2$ \\
	\hline
	p4  	& $S,T$ 				&$S^4 = T^2 = (ST)^4 = 1$								& $4.8^2$ \\
	\hline
	p4m 	& $R,R_1,R_2$ 			&$R^2=R_1^2=R_2^2=(RR_1)^4=(R_1R_2)^2=(R_2R)^4=1$		& $4.8^2$ \\
	\hline
	p4g 	& $R,S$					&$R^2=S^4=(RS^{-1}RS)^2=1$								& $4.8^2$ \\
	\hline
	p3 		& $S_1,S_2,S_3$ 		&$S_1^3=S_2^3=S_3^3=S_1S_2S_3=1$						& $3^6$ \\
	\hline
	p3 		& $S_1,S_2$ 			&$S_1^3=S_2^3=(S_1S_2)^3=1$								& $(3.6)^2$ \\
	\hline
	p31m 	& $R,S$ 				&$R^2=S^3=(RS^{-1}RS)^3=1$								& $3.12^2$ \\
	\hline
	p3m1 	& $R_1,R_2,R_3$ 		&$R_1^2=R_2^2=R_3^2=(R_1R_2)^3=(R_2R_3)^3=(R_3R_1)^3=1$	& $6^3$ \\
	\hline
	p6 		& $S,T$ 				&$S^3=T^2=(ST)^6=1$										& $3.12^2$ \\
	\hline
	p6m 	& $R, R_1,R_2$ 		&$R^2=R_1^2=R_2^2=(R_1R_2)^3=(R_2R)^2=(RR_1)^6=1$& $4.6.12$ \\
	\hline
	\hline
	\end{tabular}
	\end{center}
	\caption{The wallpaper groups given via generators and relations}
	\label{table1}
	\end{table}

\section{Cone Types}
	
	The idea of cone types is credited to Cannon (\cite{cannon:growth80} and \cite{Cannon:Paper}). We introduce modified versions
	of a cone and a cone type and use them in Section 4 to calculate the growth series and growth functions of the wallpaper groups.

	\subsection{Geodesic Words and Geodesic Growth}
	
		Once again, let $G$ be a group and $S$ be a group generating set of $G$. 
		Let $A := S \cup S^{-1}$ and consider $A$ as an alphabet.
		By $A^{*}$ we denote the set of all finite words (strings) over A, including the empty word $\o$.
		To every word $w=a_1... a_n \in A^*$ corresponds an element $\bar{w} \in G$ and a path in $\Gamma_{\#}^{+}(G,S)$
		that begins at $1 \in G$ and follows the directed labeled edges according to the letters $a_1, ... a_n$ 
		(if $a_i=s^{-1}$ for some $s \in S$ then we follow the corresponding edge in the opposite direction) 
		In addition, we have the special case $\bar{\o} = 1$.
		
		A word $w \in A$ is a \textbf{geodesic word} (or simply geodesic) 
		if its string length $|w|$ coincides the length of the element $\bar{w}$ it represents.
		Equivalently, the path represented by $w$, $p_w$ is one of the shortest paths connecting $1$ with $\bar{w} \in G$
		and is called a \textbf{geodesic path} in $\Gamma^{+}_{\#}$.
		Surely geodesic words are freely reduced (i.e.there is no occurrence of $s s^{-1}$ or $s^{-1}s$ for $s \in S$ in $w$; equivalently, there is no backtracking in $p_w$)
		
		Let $Geo(G,S) \subset A^*$ be the set of all geodesic words. We call $Geo(G,S)$ the language of geodesics.
		Define the \textbf{geodesic growth function} of $G$ w.r.t. $S$ to be the sequence: $\forall n \in \mathbb{N}$
		$$l(n) = |\{w \in Geo(G,S) | |w| = n\}| \;\;\;\forall n \in \mathbb{N},$$
		and the \textbf{geodesic growth series} to be:
		$$L(z) = \sum_{n=0}^{\infty} l(n) z^n \;\;\;\forall n \in \mathbb{C}.$$
		It is clear that $\delta(n) \leq l(n)$ for any n.

	\subsection{Word Cones}

	We begin with the definition of a word cone following \cite{Automatic_Group_Theory_Book}.
	Let $g \in G$ and choose $u \in Geo(G,S)$ such that $\bar{u}=g$.
	The \textbf{word cone} $Cone(g)$ of $g$ is the set of all words
	$w$ in our alphabet $A$ such that the concatenated word $uw$
	is a geodesic, i.e.
	$$ Cone(g) \; := \; \{ w \in A^* | uw \in Geo(G,S) \} \; = \; \{ w \in Geo(G,S) | uw \in Geo(G,S) \}. $$ 
	It is clear from the definition that $Cone(g)$ does \textit{not}
	depend on the choice of $u$. Next, two elements $g, h \in G$
	are said to have the same \textbf{word cone type} if
	$Cone(g) = Cone(h)$.
	
	For every geodesic word $w \in Geod(G,S)$ corresponds a geodesic path $p_w$ from $1$ to $\bar{w}$, which we may translate (since right multiplication is an automorphism in the left Cayley graph $\Gamma_{\#}^{+}$) to a geodesic path from $g$ to $g \bar{w}$. As a result, we may geometrically visualize $Cone(g)$ as the set of all geodesics starting from $g$ which extend any geodesic from $1$ to $g$. For an example, consider the wallpaper group p1 with presentation $<X,Y,Z|XYZ=ZYX=1>$ whose Cayley graph is $3^6$, and look at $g=\bar{ZX^{-1}}$ (see figure \ref{fig:3.2.1} below). Then the geodesic word $w=X^{-2}Z$ belongs to the word cone of $g$. In fact the geodesic word $w'=X^{-1}ZX^{-1}Z$ is also in $Cone(g)$. Observe that even though $\bar{w}=\bar{w}'$, $w$ and $w'$ represent two different elements of the word cone.
	
	\begin{figure}[h]
		\begin{center}
			\includegraphics[scale=0.17]{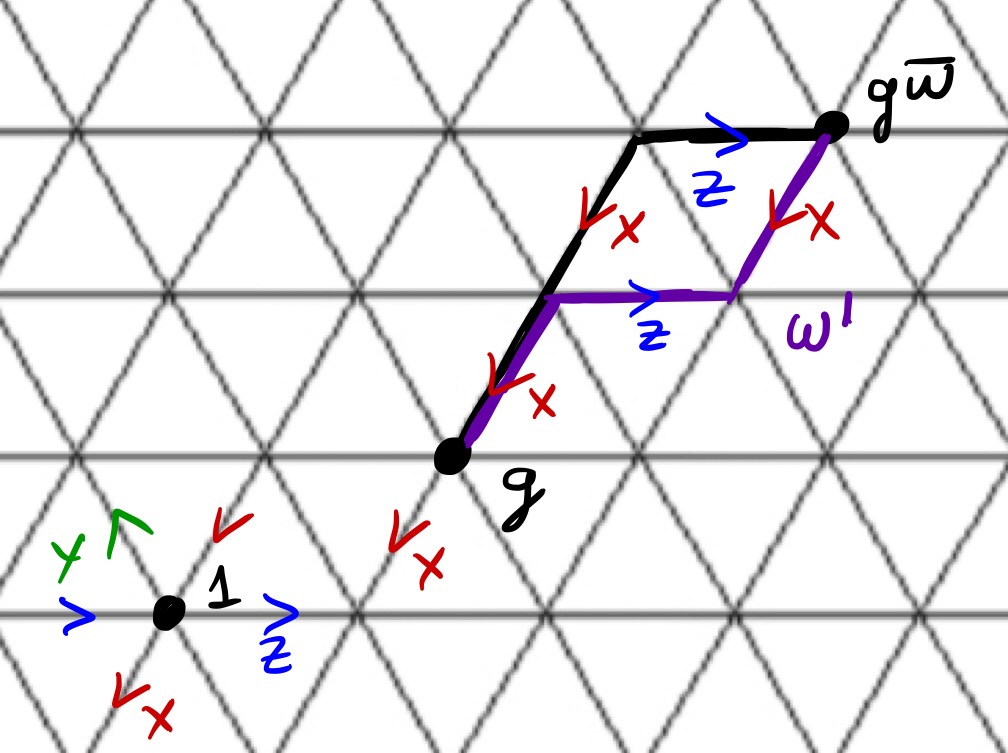}
		\end{center}
		\caption{Example of elements of a word cone.}
		\label{fig:3.2.1}
	\end{figure}
	
	The definition of a word cone turns out
	to be useful in the study of groups with automatic structure as explained in \cite{Automatic_Group_Theory_Book}.
	We should note that in automatic group theory, word cones are referred to simply as cones. 
	For the purposes of this paper, we mainly focus on a version of cones and cone types which is closer to the original definition proposed by Cannon \cite{Cannon:Paper}. We will call this 
	version	(graph) cones and (graph) cone types.

	\subsection{Graph Cones}
	
	For $g,h \in G$, we say that $g \leq h$ when there exists a geodesic path on $\Gamma$ from the identity element $1$ to $h$ which passes through g. It is not hard to see that $(G,\leq)$ is a partial order on $G$.
	
	We now define graph cones. For each $g\in G$, the \textbf{(graph) cone} $C(g)$ is a sub-graph of $\Gamma$ whose vertices are
	$$ \{ h \in G | g \leq h \} = \{h \in G | \exists \; geodesic \; path \; from \; 1 \; to \; h \; passing \; through \; g\}, $$
	and whose edges are all the edges of $\Gamma$ which connect the vertices of $C(g)$.
	We call vertex $g$ the \textbf{root vertex} of the (graph) cone $C(g)$.
	For the rest of this paper, we shall refer to graph cones simply as \textbf{cones}. Figure \ref{fig:3.3.1} shows three examples of cones in the graphs $3^6$, $(3.6)^2$ and $4.6.12$.
	
	\begin{figure}[h]
	\begin{center}
		\includegraphics[scale=0.1]{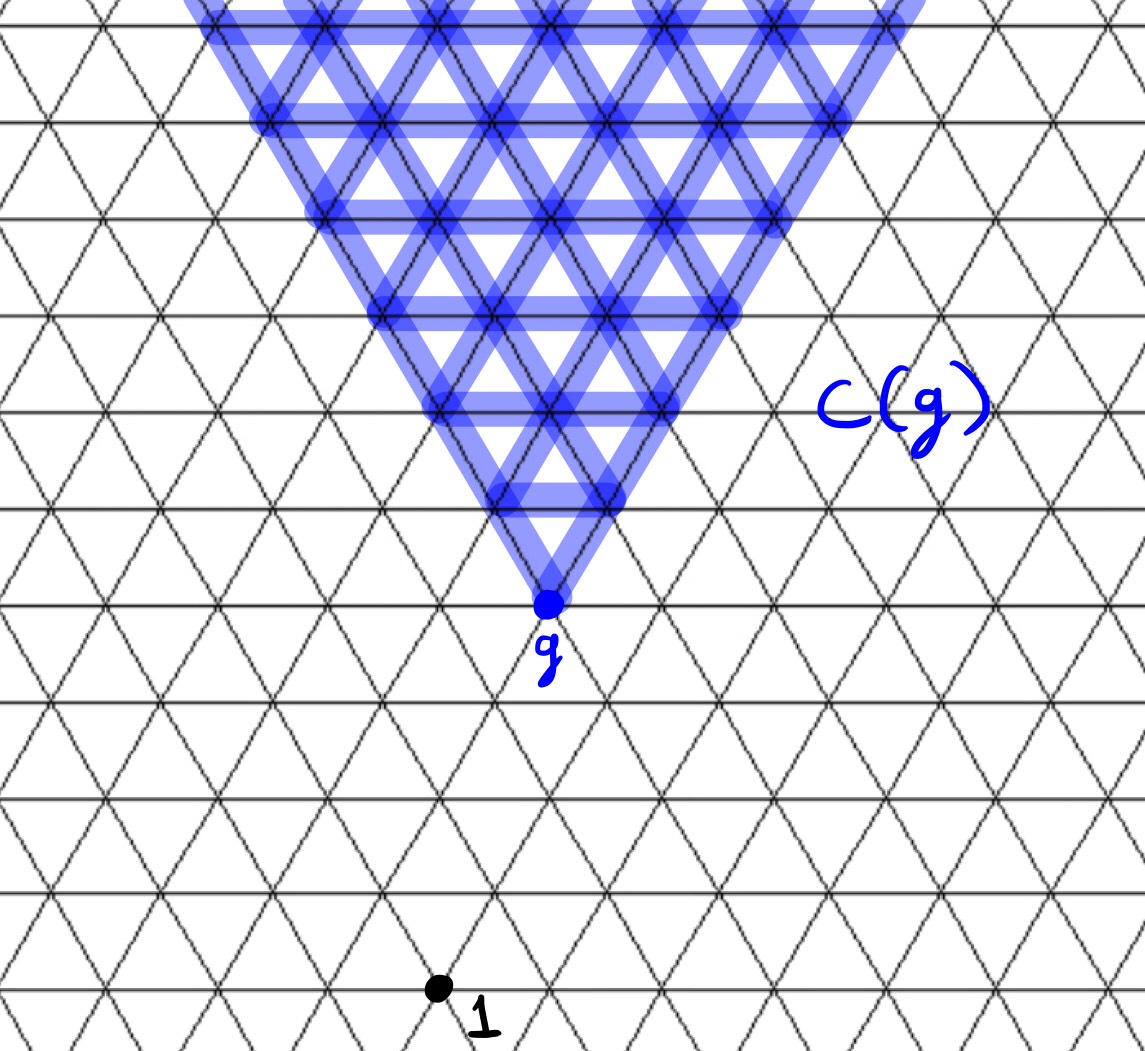}
		\includegraphics[scale=0.1]{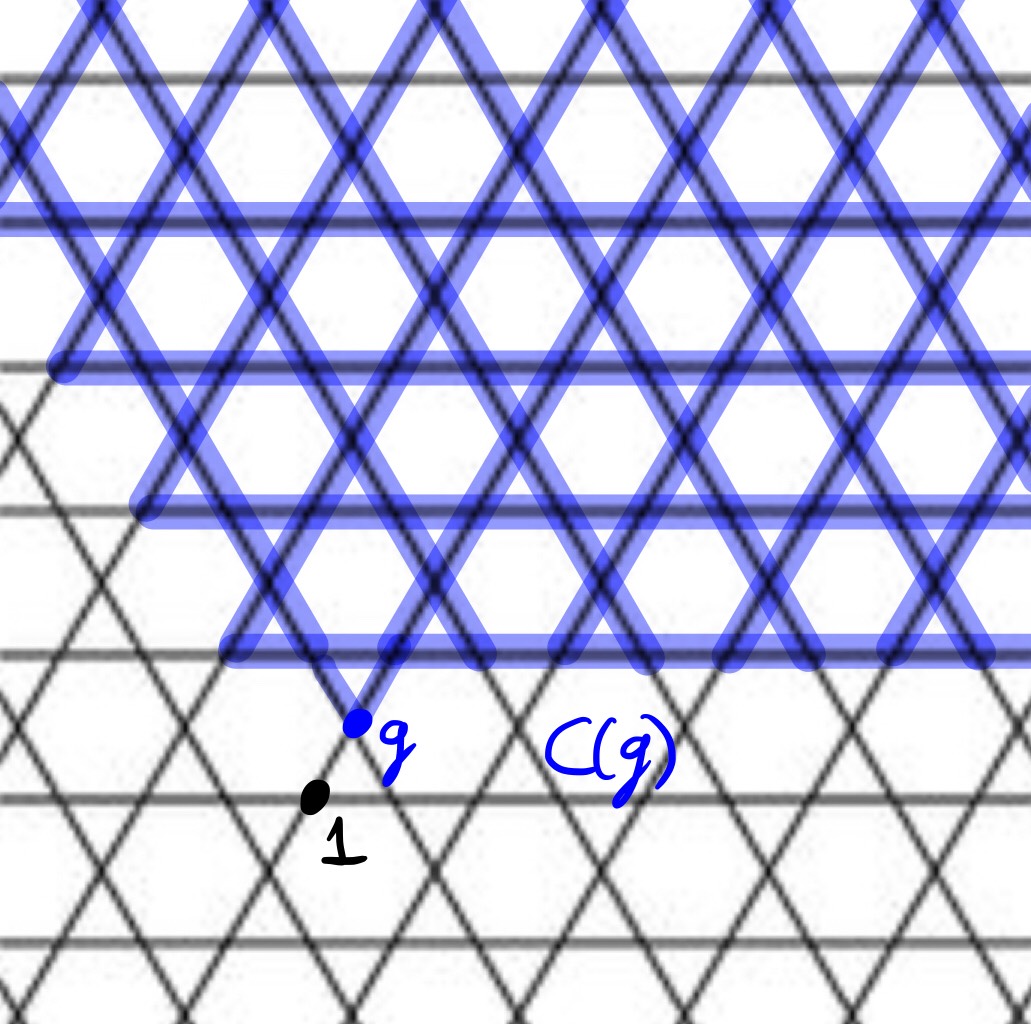}
		\includegraphics[scale=0.1]{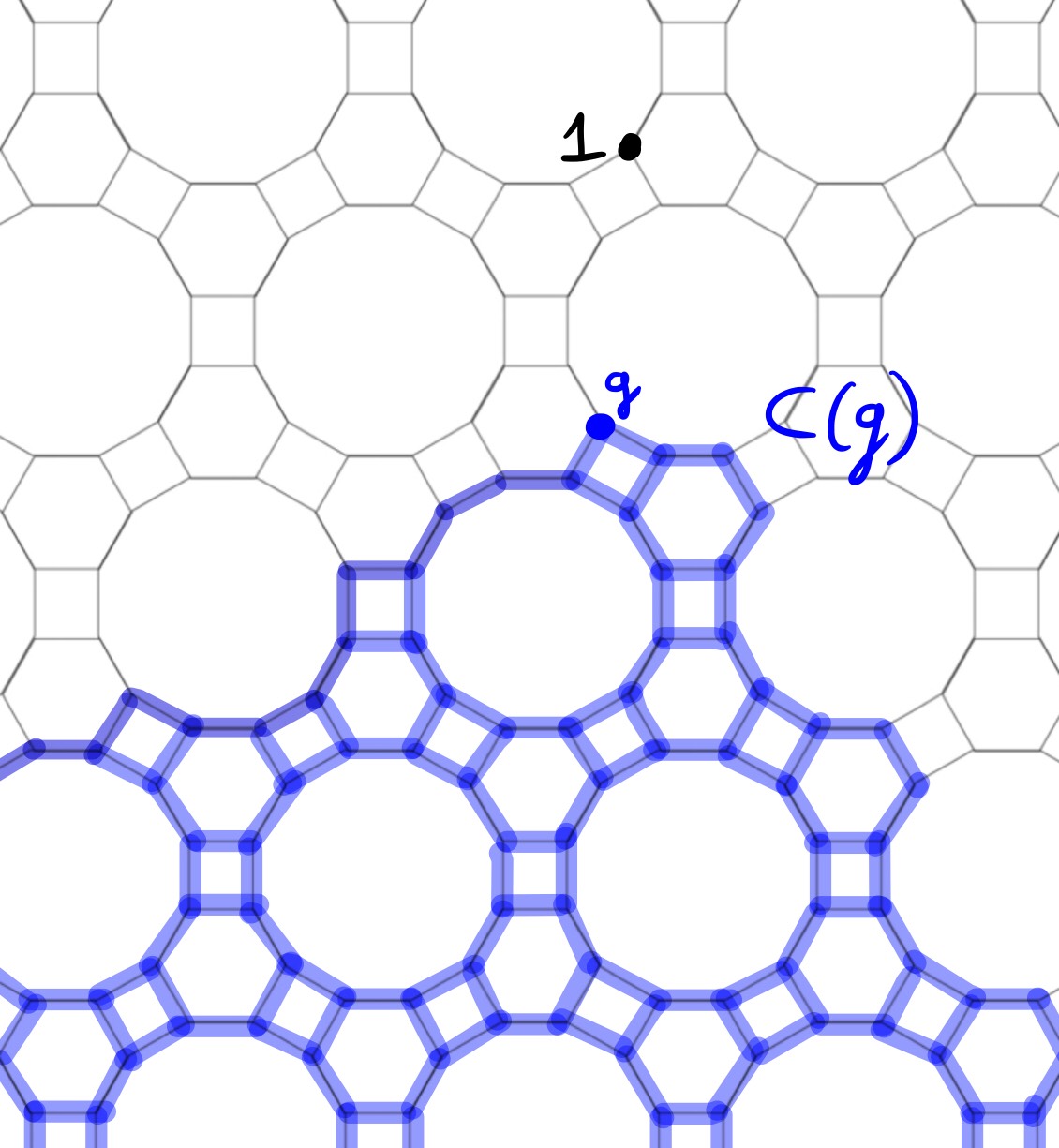}
		\caption{Examples of graph cones.}
		\label{fig:3.3.1}
	\end{center}
	\end{figure}

	Next, we say that two cones $C(g)$, $C(h)$ are \textbf{weakly equivalent} (written as $C(g) \sim C(h)$) when there is a graph isomorphism $T: C(g) \rightarrow C(h)$ (i.e. T is a 1-1 correspondence of vertices to vertices, edges to edges and preserves the graph structure) sending the root vertex $g$ to the root vertex $h$. So the \textbf{(graph) cone type} of $g$ is the equivalence class (formed by the above notion of weak equivalence) in which $C(g)$ belongs to.
	The notion of cone types will be useful in obtaining recursive relations for the geodesic growth of G. 

	\subsection{The Tangent Edges}
	
	Next we partition all the edges of the non-oriented unlabeled Cayley graph $\Gamma$ into two sets. We say that an edge $e=(g,h)$ ($g,h\in G$) is \textbf{tangent} if $|g|=|h|$, and is \textbf{normal} otherwise, i.e. when $||g|-|h||=1$. 
	Furthermore, we write \\
	$A_g$ for the set of all 
	edges (without orientation) that are attached to $g$, \\
	$A^t_g$ for the set of all tangent edges in $A_g$, \\
	$A^+_g$ for the set of all normal edges $(g,h)$ in $A_g$ with $|h|=|g|+1$ and \\
	$A^-_g$ for the set of all normal edges $(g,h)$ in $A_g$ with $|g|=|h|+1$ \\
	The edges in $A^+_g$ are often called the \textbf{norm increasing edges} starting at $g$, and \textbf{norm decreasing edges} for those in $A^-_g$.
	
	It is important to remark that if $\Gamma$ is a bipartite graph (meaning that the graph has no closed paths of odd length) then $\Gamma$ has no tangent edges (for if $(g,h)\in A^t_g$, then the loop starting at $1$ going, via a geodesic, to $g$ then to $h$ and next going back to $1$ via a geodesic has length $2|g|+1$ and is thus an odd length closed path).  
	
	Out of the 7 Cayley graphs that we will study, 4 are bipartite: $4^4$, $6^3$, $4.8^2$ and $4.6.12$.
	Figure \ref{fig:3.4.1} sketches with color the tangent(with red) and normal(with blue) edges of each of remaining 3 non-bipartite Cayley graphs: $3^6$, $(3.6)^2$ and $3.12^2$. The tangent edges may be  intuitively interpreted as being tangent to the discrete spheres centered at the identity.
	
	\begin{figure}[h]
	\begin{center}
		\includegraphics[scale=0.07]{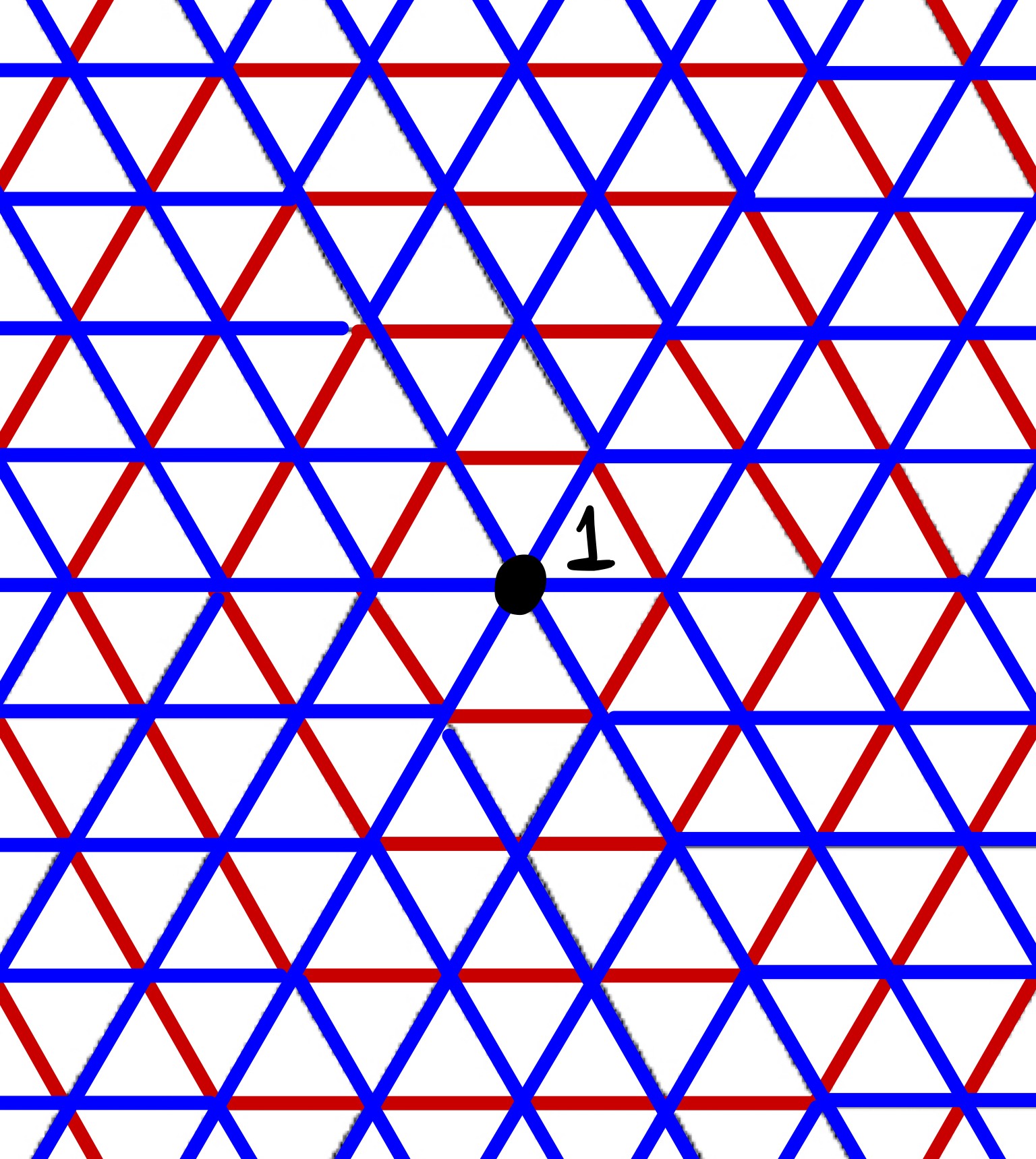}
		\includegraphics[scale=0.07]{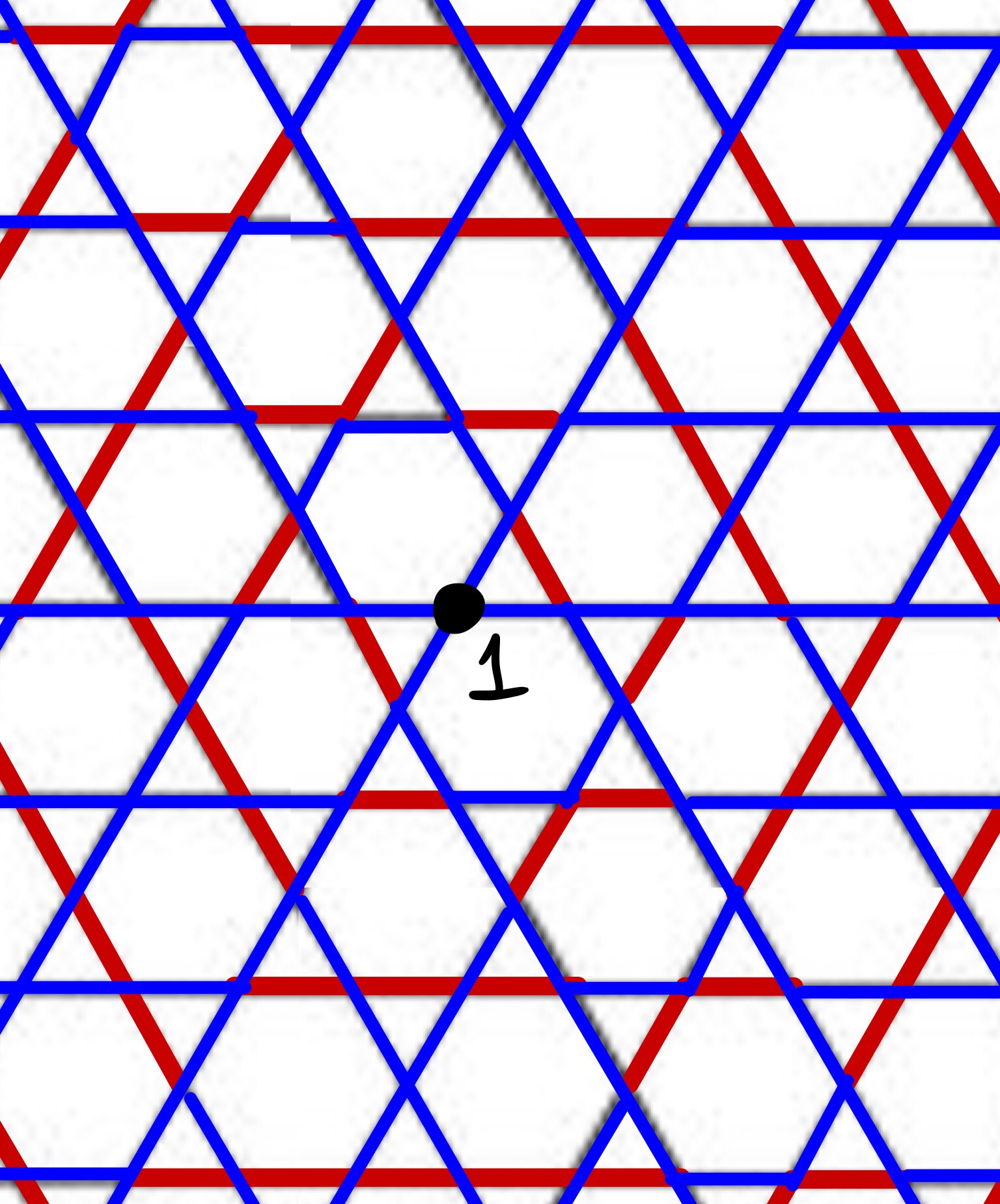}
		\includegraphics[scale=0.10]{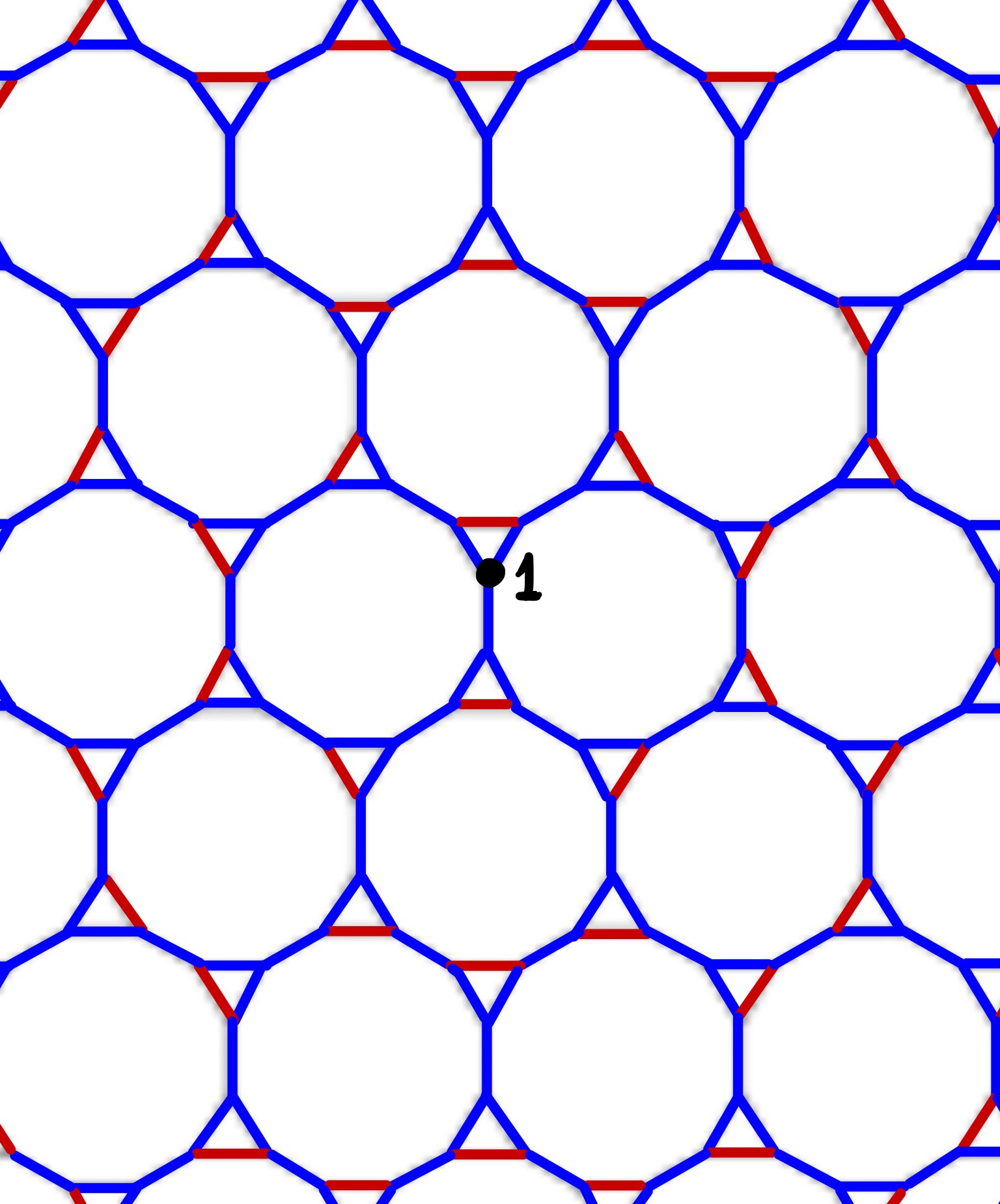}
		\caption{Examples of normal(blue) and tangent(red) edges.}
		\label{fig:3.4.1}
	\end{center}
	\end{figure}	 
	
	We next prove three basic lemmas that will be needed in subsequent sections.
	
	\begin{lemma}
	A path in $\Gamma$ from $1$ to $g \in G$ is a geodesic if and only if every edge on the path is norm increasing.	
	\end{lemma}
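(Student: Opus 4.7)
The plan is to prove both implications directly from the definition of the length $|g|$ as the minimum number of edges in a path from $1$ to $g$, together with the fact that subpaths of a geodesic are themselves geodesic.

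For the ``only if'' direction, I would take a geodesic path $1 = v_0, v_1, \dots, v_n = g$ with $n = |g|$, and argue that for each $i$ we have $|v_i| = i$. The inequality $|v_i| \leq i$ is automatic since $v_0, \dots, v_i$ is a path of length $i$ from $1$ to $v_i$. For the reverse inequality, if $|v_i| < i$ for some $i$, one could concatenate a geodesic from $1$ to $v_i$ with the tail $v_i, v_{i+1}, \dots, v_n$ to obtain a path from $1$ to $g$ of length strictly less than $n$, contradicting $n = |g|$. Hence $|v_i| = i = |v_{i-1}| + 1$ for every $i$, so every edge on the path is norm increasing.

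For the ``if'' direction, suppose every edge $(v_{i-1}, v_i)$ on the path is norm increasing, so $|v_i| = |v_{i-1}| + 1$. Telescoping from $|v_0| = |1| = 0$ yields $|v_n| = n$. Since the path has length $n$ and connects $1$ to $v_n = g$ with $|g| = n$, it is a geodesic.

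The argument is essentially just bookkeeping with the length function, and I do not foresee any real obstacle; the only small subtlety is invoking the prefix property of geodesics (subpaths of a geodesic are geodesic), which itself follows immediately from the definition of $|\cdot|$ as a minimum.
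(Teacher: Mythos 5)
Your proposal is correct and follows essentially the same route as the paper: the backward direction is the identical telescoping of $|v_i| = |v_{i-1}|+1$ down to $|v_0|=0$, and the forward direction uses the same shortening device (replace the prefix up to an offending vertex by a geodesic and splice on the tail), merely packaged as the claim $|v_i|=i$ rather than as the paper's contrapositive locating a tangent or norm-decreasing edge. No gaps.
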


	\begin{proof}
	$\rightarrow:$
	If a path $p$ from $1$ to $g$ has a tangent or norm decreasing edge $(h,k)$ (i.e. the path goes first to h and then to k, and $|k|<|h|+1$), then we may take a geodesic from $1$ to $k$ which will have $|k|$ many edges and combine it with the sub-path of $p$ from $k$ to $g$. The result is a path from $1$ to $g$ having one or two less edges, and since it is shorter in length that $p$, $p$ cannot be a geodesic.
	\\ $\leftarrow$ Let $p$ be a path from $1$ to $g$ which goes through the points $1=x_0, x_1,x_2, ... x_n = g$ where $(x_i,x_{i+1}) \in A^+_{x_i}$ (i.e. is norm increasing) for every $i=1,2,..n-1$. Then by the definition of $A^+_{x_i}$, we have:
	$$|g|=|x_n|=|x_{n-1}|+1=|x_{n-2}|+1+1=...$$
	$$...=|x_1|+(1+1+...+1)(n-1\;times)=|x_0|+n=n.$$
	Since $|g|=n$, $p$ is a geodesic.
	\end{proof}
	
	\begin{lemma}
	For any $g \in G$, $k \in C(g)$,
	$$ d_{C(g)}(g,k) = |k|-|g| = d_{\Gamma}(g,k),$$
	where $d_{C(g)}$ is the discrete metric of the graph $C(g)$.
	\end{lemma}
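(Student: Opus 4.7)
The plan is to prove the two equalities by sandwiching $d_{C(g)}(g,k)$ and $d_\Gamma(g,k)$ between $|k|-|g|$ from both sides. The basic input is the defining property of $C(g)$: there exists a geodesic $p$ in $\Gamma$ from $1$ to $k$ that passes through $g$. Such a path splits into an initial segment $p_1$ from $1$ to $g$ of length $|g|$ and a terminal segment $p_2$ from $g$ to $k$ of length $|k|-|g|$.

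First I would establish $d_\Gamma(g,k) = |k|-|g|$. The path $p_2$ immediately gives $d_\Gamma(g,k)\leq |k|-|g|$. For the reverse inequality, the triangle inequality in $(G,d_\Gamma)$ yields
\[
|k| = d_\Gamma(1,k) \leq d_\Gamma(1,g) + d_\Gamma(g,k) = |g| + d_\Gamma(g,k),
\]
so $d_\Gamma(g,k)\geq |k|-|g|$. This gives the second equality in the statement.

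Next I would show $d_{C(g)}(g,k) = |k|-|g|$. Since $C(g)$ is a subgraph of $\Gamma$, we automatically have $d_{C(g)}(g,k)\geq d_\Gamma(g,k) = |k|-|g|$. For the opposite inequality the key step, and really the only nontrivial point in the proof, is to verify that the segment $p_2$ lies \emph{entirely inside} $C(g)$, i.e.\ that every intermediate vertex of $p_2$ belongs to $C(g)$. Let $g = y_0,y_1,\dots, y_m = k$ (with $m = |k|-|g|$) be the successive vertices of $p_2$. Concatenating $p_1$ with the initial sub-path $y_0,\dots,y_i$ of $p_2$ produces a path from $1$ to $y_i$ of length $|g|+i$; since the full path $p = p_1\cdot p_2$ is a geodesic, every initial sub-path is also a geodesic, hence this concatenation is a geodesic from $1$ to $y_i$ passing through $g$. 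Thus $g\leq y_i$, so $y_i\in C(g)$ for every $i$. The edges of $p_2$ connect consecutive vertices of $C(g)$ and so belong to $C(g)$ by definition, which means $p_2$ is a path of length $|k|-|g|$ inside $C(g)$ from $g$ to $k$. Therefore $d_{C(g)}(g,k)\leq |k|-|g|$, completing the chain of equalities.

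The main (minor) obstacle to get right is this last verification that $C(g)$ is ``geodesically convex from $g$'': one must argue at the level of initial sub-paths of the geodesic $p$ rather than just invoking the existence of some geodesic from $1$ to $k$ through $g$. Once this observation is made, everything else is just the triangle inequality and the definition of the subgraph metric.
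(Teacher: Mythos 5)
Your proposal is correct and follows essentially the same route as the paper's own proof: both derive $d_\Gamma(g,k)=|k|-|g|$ from the sub-path of a geodesic through $g$, and both establish $d_{C(g)}(g,k)\leq |k|-|g|$ by showing that every intermediate vertex of that sub-path lies in $C(g)$ via initial sub-paths of the geodesic. Your write-up is in fact slightly more explicit than the paper's about why the intermediate vertices belong to $C(g)$ and why the edges of the sub-path are edges of the induced subgraph.
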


	\begin{proof}
	By definition, take a geodesic path $p$ from $1$ to $g$ to $k$, so the sub-path $p'$ of this geodesic path from $g$ to $k$ is also a geodesic and has length $|k|-|g|$. From this, the second equality follows. \\
	For the first equality, since $C(g)$ is a su-graph of $\Gamma$, $|k|-|g| = d_{\Gamma}(g,k) \leq d_{C(g)}(g,k)$. We show that $p'$ lies in $C(g)$ from which we get $d_{C(g)}(g,k) \leq |k|-|g|$ and equality follows. \\
	By the definition of $C(g)$, it suffices to show the vertices of $p'$ are in $C(g)$. Let $x$ be a vertex of $p'$. Then $x$ is a vertex of $p$ so we consider the sub-path of $p$ from $1$ to $x$. $p''$ is then a geodesic path in $\Gamma$ from $1$ to $x$ which passes through $g$. By definition, $x \in C(g)$ and so $p'$ lies in $C(g)$.
	\end{proof}
	
	\begin{lemma}
	If $T:C(g) \rightarrow C(h)$ is a weak cone equivalence,\\ then T maps the tangent edges, norm increasing (oriented) edges and the norm decreasing (oriented) edges of $C(g) \subseteq \Gamma$ \\ to the tangent edges, norm increasing (oriented) edges and the norm decreasing (oriented) edges of $C(h) \subseteq \Gamma$ respectively.
	\end{lemma}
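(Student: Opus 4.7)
The plan is to reduce the statement to a single numerical identity: for every vertex $k \in C(g)$, I want to show
$$|T(k)| - |h| = |k| - |g|,$$
because then for any edge $(k_1,k_2)$ in $C(g)$ we have $|T(k_1)| - |T(k_2)| = |k_1|-|k_2|$, which immediately classifies $T(e)$ as tangent, norm increasing or norm decreasing exactly as $e$ is.

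To establish that identity, the key input is Lemma 3.2. Since $T$ is a graph isomorphism of $C(g)$ onto $C(h)$ fixing the root (sending $g$ to $h$), $T$ is an isometry with respect to the intrinsic metrics $d_{C(g)}$ and $d_{C(h)}$. For any $k \in C(g)$, Lemma 3.2 applied in $C(g)$ gives $d_{C(g)}(g,k) = |k|-|g|$, and applied in $C(h)$ to the vertex $T(k)$ gives $d_{C(h)}(h,T(k)) = |T(k)|-|h|$. Equating the two via the isometry yields the desired identity.

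The last step is just bookkeeping on edges: by the definition of a graph cone in Section 3.3, every edge of $C(g)$ is an edge of $\Gamma$ whose endpoints both lie in $C(g)$, and similarly for $C(h)$; since $T$ is a graph isomorphism it sends edges to edges. Then the three cases
$|k_1|=|k_2|$, $|k_2|=|k_1|+1$, $|k_2|=|k_1|-1$
translate into the three cases for $T(e)$ by the identity above, proving tangent, norm increasing and norm decreasing edges are preserved (and the orientation, determined by which endpoint has larger norm, is preserved as well).

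I do not anticipate any real obstacle: the only subtlety is making sure Lemma 3.2 can be applied inside $C(h)$ to the image vertex $T(k)$, which is fine since $T(k)\in C(h)$ by hypothesis and Lemma 3.2 applies to every element of a cone. No case analysis on the type of graph (bipartite or not) is needed since the proof uses only the cone-metric identity.
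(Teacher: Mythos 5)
Your proposal is correct and is essentially the paper's own argument: the paper likewise combines Lemma 3.2 with the fact that $T$ is an isometry of the intrinsic cone metrics fixing the root, obtaining the chain $|Tk|-|h| = d_{C(h)}(h,Tk) = d_{C(g)}(g,k) = |k|-|g|$ and reading off the edge type from the resulting norm differences. Your packaging (prove the identity $|T(k)|-|h|=|k|-|g|$ once, then handle all three edge types uniformly) is a slightly tidier presentation of the same proof.
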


	\begin{proof}
	Let $(k_1,k_2)\in C(g)$ be a tangent edge in $C(g)$, so $|k_1|=|k_2|$. Then of course $(Tk_1,Tk_2)$ is an edge of $C(h)$. Since $T$ is a graph automorphism, the discrete graph metrics of $C(g)$ and $C(g)$ are equivalent with $T$ as an isometry. Using the above remark and Lemma 3.2, we have:
	$$|Tk_1|-|h| = d_{C(h)}(h,Tk_1) = d_{C(g)}(g,k_1) = |k_1|-|g|$$ $$ = |k_2|-|g| = d_{C(g)}(g,k_2) = d_{C(h)}(h,Tk_2) = |Tk_2|-|h|.$$
	Hence $|Tk_1|=|Tk_2|$, i.e. $(Tk_1,Tk_2)$ is tangent. The norm increasing/decreasing cases are identical to this one.
	\end{proof}
	
	As we will see later, in order to write down well defined recursive relations for the spherical growth using cone types, the cone equivalence will at least need to preserve $|A^t_k|$, the number of half edges attached to any element of the cone $k\in C(g)$. Lemma 3.3 tells us that at least $|A^t_k \cap C(g)|$ is preserved.
	
	Cannon's original definition considered the same notion of a cone but restricted cone equivalence only through graph isomorphisms $T:C(g) \rightarrow C(h)$ given by the right multiplication map $T(k):=hg^{-1}k$ (where $k\in G$). In addition, Cannon required that the above map $T:G \rightarrow G$ must send the tangent edges of any $k\in C(g)$ to the tangent edges of $Tk\in C(h)$. Unfortunately, for the wallpaper groups, this definition has two major difficulties. First, in certain complicated graphs, like $3.(12)^2$, it is difficult to distinguish which edges attached to C(g) are tangent, unless these edges are inside C(g) (then one can use Lemma 3.3). The second problem is a bit more practical. If we restrict to graph isomorphisms given by left multiplication, then we end up with a very large number of cone types, and hence our systems of equations becomes too large, tiresome and vulnerable to human error. The above observations force us to consider a more general notion of cones and cone types, strict enough to preserve tangent edges and yield well defined relations, but flexible enough to make calculations feasible.

	\subsection{Extended Graph Cones}
	
	The \textbf{extended (graph) cone} of $g$, $\bar{C}(g)$ will be another sub-graph of  $\Gamma$, with the same vertices and edges as C(g) along with the additional vertices
	$$\{ k\in G | \exists h\in C(g), (h,k)\in A^t_h \},$$	
	and the additional edges
	$$\{ (k,h)\in \Gamma | h\in C(g), (h,k)\in A^t_h \}.$$	
	
	Figure \ref{fig:3.5.1} shows the extended cones for the same vertices considered in figure \ref{fig:3.3.1}. Observe that the extended cone of the vertex in the graph $4.6.12$ is the same graph as the cone at the same vertex. This occurs because $4.6.12$ is bipartite; Bipartite graphs have no tangent edges and therefore extended cones are the same as cones.
	
	\begin{figure}[h]
	\begin{center}
		\includegraphics[scale=0.12]{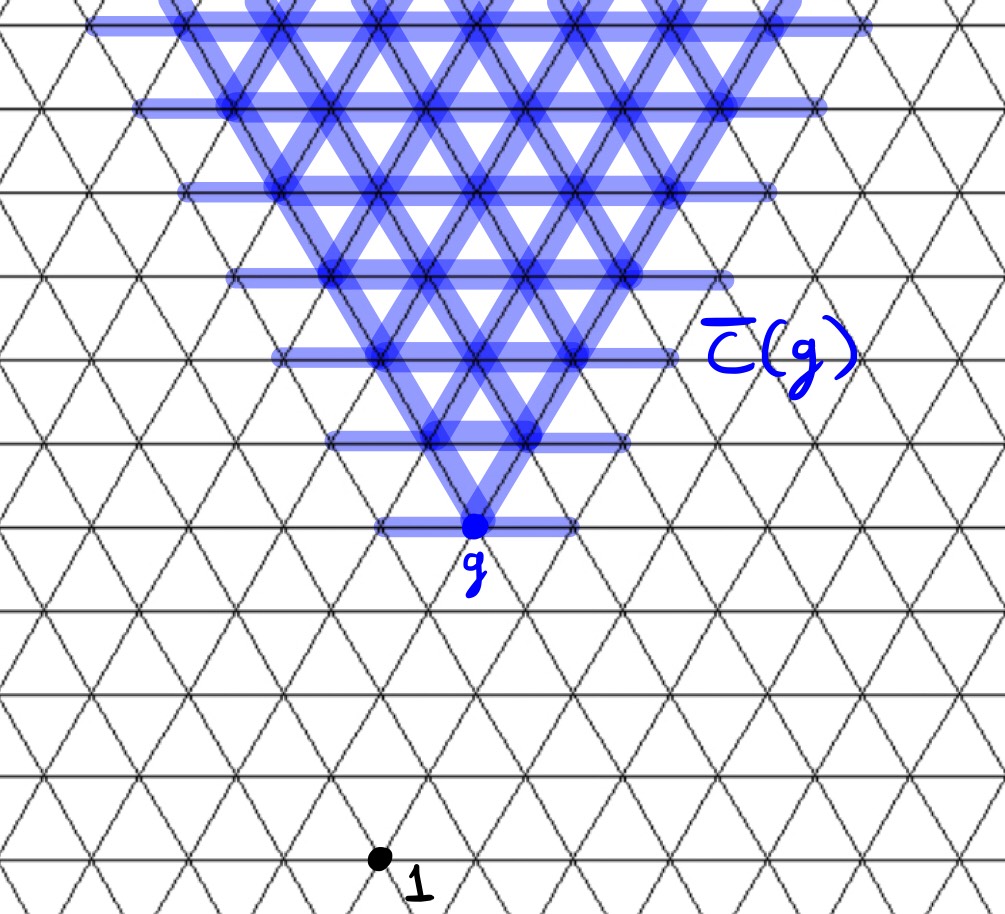}
		\includegraphics[scale=0.12]{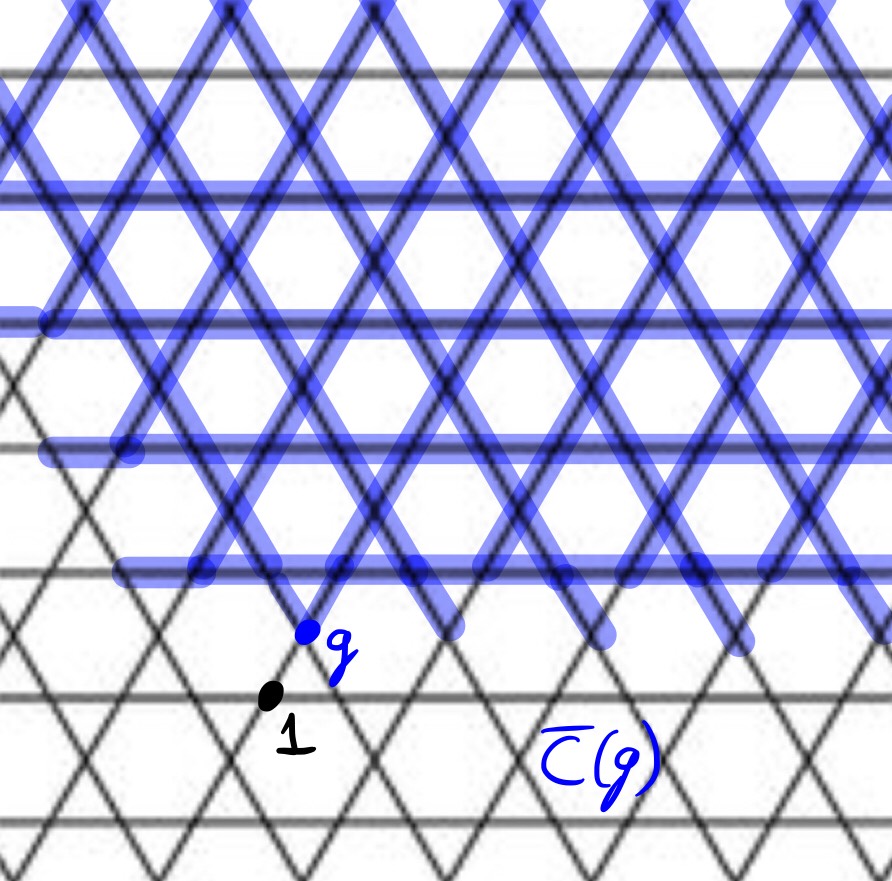}
		\includegraphics[scale=0.12]{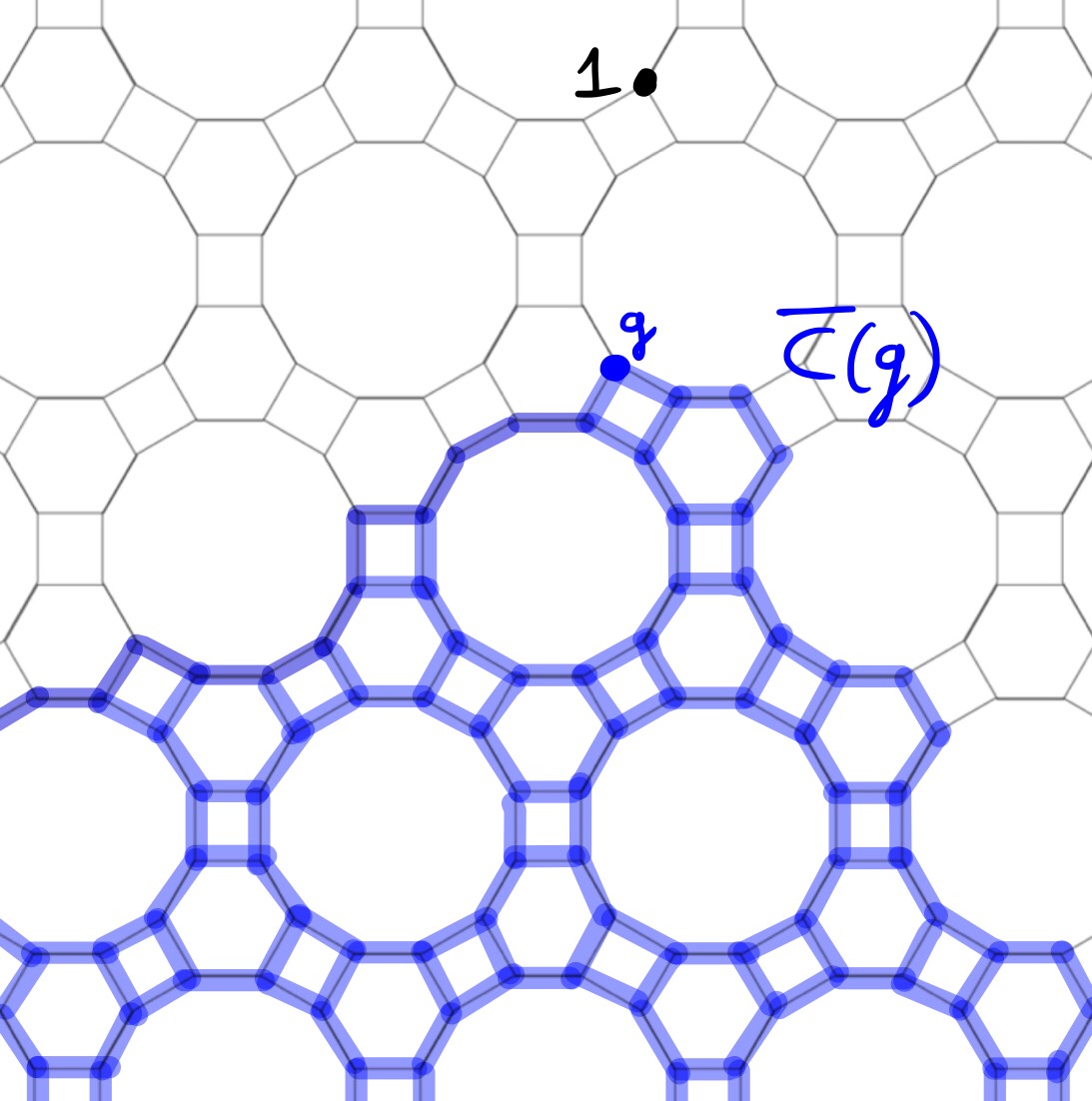}
		\caption{Examples of extended graph cones.}
		\label{fig:3.5.1}
	\end{center}
	\end{figure}
	
	As an amusing analogy, we may view $\bar{C}(g)$ as a cone with "antennas", which extend our cone to capture information about the surroundings. It is 	essentially this information that allows us to obtain a well defined system of recursive relations from which we find the spherical growth.	
	
	Next, we say that two extended cones $\bar{C}(g)$, $\bar{C}(h)$ are \textbf{strongly equivalent} (written as $\bar{C}(g) \sim \bar{C}(h)$) if there exists a graph isomorphism $T:\bar{C}(g) \rightarrow \bar{C}(h)$ such that T(g)=h and T(C(g))=C(h). The equivalence classes that are formed using the above equivalence relation are called \textbf{extended cone types}. It follows from the definition that $\bar{C}(g) \sim \bar{C}(h) \implies C(g) \sim C(h)$. We also have:	
	
	\begin{lemma}
	For any $g,h \in G$,
	$$ h\in C(g) \implies C(h) \subseteq C(g) \;and\; \bar{C}(h) \subseteq \bar{C}(g). $$
	\end{lemma}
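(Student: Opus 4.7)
The plan is to prove the vertex-level containment $C(h) \subseteq C(g)$ first, from which the edge-level containment for the graph cones will be automatic, and then bootstrap to the extended cones. Morally, this is just transitivity of the partial order $\leq$, re-expressed in cone language.

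For the vertex inclusion, I would take an arbitrary $k \in C(h)$ and construct a geodesic from $1$ to $k$ that passes through $g$. By the hypothesis $h \in C(g)$, there is a geodesic $p_1$ from $1$ to $h$ passing through $g$; since $k \in C(h)$, there is a geodesic $p_2$ from $1$ to $k$ passing through $h$. The candidate witness is the concatenation of $p_1$ with the terminal portion of $p_2$ running from $h$ to $k$ (which is itself a geodesic as a subpath of a geodesic). Its total length is $|h| + d_\Gamma(h,k)$, and Lemma 3.2 applied to $k \in C(h)$ gives $d_\Gamma(h,k) = |k|-|h|$, so the concatenation has length exactly $|k|$ and is therefore a geodesic from $1$ to $k$. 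Since it visits $g$ along the way, we conclude $k \in C(g)$.

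For the edge inclusion, I would invoke the fact that $C(g)$ is defined as the induced subgraph of $\Gamma$ on its vertex set, and similarly for $C(h)$; once the vertex sets are nested, every edge of $C(h)$ is automatically an edge of $\Gamma$ between two vertices of $C(g)$ and hence an edge of $C(g)$. For the extended cones, any additional vertex $v$ of $\bar{C}(h)\setminus C(h)$ arises as the endpoint of a tangent edge $(h',v)\in A^t_{h'}$ for some $h' \in C(h)$; by the vertex inclusion already established, $h' \in C(g)$, which by definition places both $v$ and the edge $(h',v)$ inside $\bar{C}(g)$. Together with $C(h) \subseteq C(g) \subseteq \bar{C}(g)$ this gives $\bar{C}(h) \subseteq \bar{C}(g)$. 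The only step with any genuine content is verifying that the concatenated path really is a geodesic, and Lemma 3.2 is precisely the tool that makes this routine; I do not anticipate any other obstacle, as the rest is bookkeeping directly from the definitions of $C(\cdot)$ and $\bar{C}(\cdot)$.
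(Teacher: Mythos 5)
Your proposal is correct and follows essentially the same route as the paper: concatenate a geodesic from $1$ through $g$ to $h$ with the geodesic segment from $h$ to $k$, then handle the extra tangent edges of $\bar{C}(h)$ by noting their attachment points already lie in $C(g)$. You are somewhat more explicit than the paper in verifying (via Lemma 3.2) that the concatenation has length $|k|$ and is therefore a geodesic, but this is the same argument.
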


	\begin{proof}
	If $k \in C(h)$ then we have two geodesics from $1$ to $g$ to $h$ and from $1$ to $h$ to $k$. We may then combine these two geodesics to get a new one from $1$ to $g$ to $h$ to $k$, implying that $k \in C(g)$. Next, if $l \in \bar{C}(h)\setminus C(h)$, then there is a tangent edge $(l,k)$ with $k \in C(h)$. But then since $k \in C(g)$, this tangent edge together with $l$ is also included in $\bar{C}(g)$.
	\end{proof}	
	
	Finally, it so happens that weak cone equivalence and strong cone equivalence are the same for the 7 Cayley graphs discussed in this paper. As already discussed before, this is also the case for any bipartite Cayley graph. In general, it is not known whether there exists a Cayley graph in which weak cone equivalence and strong cone equivalence are not the same.
	
	\subsection{Cannon's System of Equations}
		
	In this section, we generalize Theorem 7 from Cannon \cite{Cannon:Paper}.
	
	We define numbers $m(\cdot)$ and $n(\cdot ,\cdot)$, show that they are invariant under strong cone equivalence and then use them to deduce Cannon's system of equations for the spherical growth series.
	
	For each $g\in G$ let $m(\bar{C}(g))$ be number of norm decreasing edges of $g$, i.e.: 
	$$m(\bar{C}(g)) := |A^-_g| = |A_g|-|A^t_g|-|A^+_g| = \{(g,h) \in \Gamma : |g|=|h|+1\}.$$
	Next, for each $g,h,\in G$, let $n(\bar{C}(g),\bar{C}(h))$ be the number of edges $(g,h')$ in $C(g)$ with $\bar{C}(h') ~ \bar{C}(h)$, that is:
	$$ n(\bar{C}(g),\bar{C}(h)) := \{(g,h')\in A^+_g| \bar{C}(h) \sim \bar{C}(h') \}.$$
	
	\begin{lemma}
	For any $g_1,g_2,h_1,h_2 \in G$,
	$$ \bar{C}(g_1) \sim \bar{C}(g_2) \;and\; \bar{C}(h_1) \sim \bar{C}(h_2) \implies $$ $$ m(\bar{C}(g_1))=m(\bar{C}(g_2)) \;and \; n(\bar{C}(g_1),\bar{C}(h_1)) = n(\bar{C}(g_2),\bar{C}(h_2)). $$
	\end{lemma}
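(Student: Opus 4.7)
The plan is to extract everything from the strong cone equivalence $T : \bar{C}(g_1) \to \bar{C}(g_2)$ (which by definition satisfies $T(g_1) = g_2$ and $T(C(g_1)) = C(g_2)$). Lemma 3.3 already says that $T$ preserves edge type (tangent, norm-increasing, norm-decreasing) on the edges of $C(g_1)$; the extra work is to push this information to the root $g_1$ itself, to the antenna edges, and finally to descend the equivalence to sub-cones.

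For the $m$-equality, I would first classify the $|A_{g_1}|$ edges of $\Gamma$ incident to $g_1$. A norm-decreasing edge $(g_1,h)$ has $|h|<|g_1|$, so no geodesic from $1$ to $h$ can pass through $g_1$; thus $h \notin C(g_1)$, and since the edge is not tangent it is absent from $\bar{C}(g_1)$ as well. A norm-increasing edge lies entirely in $C(g_1)$. A tangent edge at $g_1$ has the other endpoint outside $C(g_1)$, but the edge is present in $\bar{C}(g_1)$ as an antenna. So the edges at $g_1$ split as $|A^+_{g_1}|$ edges inside $C(g_1)$, $|A^t_{g_1}|$ edges in $\bar{C}(g_1)\setminus C(g_1)$, and $|A^-_{g_1}|$ edges outside $\bar{C}(g_1)$. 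Since $T$ is a graph isomorphism sending $C(g_1)$ onto $C(g_2)$ and $g_1$ onto $g_2$, it gives a bijection between edges at $g_1$ in $C(g_1)$ and edges at $g_2$ in $C(g_2)$, forcing $|A^+_{g_1}|=|A^+_{g_2}|$; the same accounting for $\bar{C}(g_1)\setminus C(g_1)$ yields $|A^t_{g_1}|=|A^t_{g_2}|$. Regularity of $\Gamma$ gives $|A_{g_1}|=|A_{g_2}|$, hence $m(\bar{C}(g_1)) = |A_{g_1}| - |A^t_{g_1}| - |A^+_{g_1}|$ agrees with $m(\bar{C}(g_2))$.

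For the $n$-equality, the central technical step is to show that whenever $h' \in C(g_1)$, the restriction of $T$ is itself a strong cone equivalence $\bar{C}(h') \to \bar{C}(T(h'))$. Lemma 3.4 ensures that $\bar{C}(h')\subseteq \bar{C}(g_1)$ and $\bar{C}(T(h'))\subseteq \bar{C}(g_2)$, so the restriction makes sense. To verify $T(C(h'))=C(T(h'))$, I would take $k \in C(h')$: concatenating a $1$-to-$g_1$ geodesic with the geodesic $g_1\to h'\to k$ (which lies in $C(g_1)$ and is norm-increasing by Lemma 3.1) produces a geodesic $1\to g_1\to h'\to k$; applying $T$ and Lemma 3.3 transfers the norm-increasing piece into $C(g_2)$, and Lemma 3.1 then certifies that $1\to g_2\to T(h')\to T(k)$ is also a geodesic, so $T(k)\in C(T(h'))$. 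The reverse inclusion follows by running the same argument for $T^{-1}$. To verify $T(\bar{C}(h'))=\bar{C}(T(h'))$, I observe that the remaining (antenna) vertices and edges of $\bar{C}(h')$ are precisely the tangent edges at vertices of $C(h')$; Lemma 3.3 transports these to tangent edges at vertices of $C(T(h'))$, which are exactly the antennas of $\bar{C}(T(h'))$.

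Once this sub-cone descent is in place, $T$ gives a bijection between $A^+_{g_1}$ and $A^+_{g_2}$ that sends an edge $(g_1,h')$ with $\bar{C}(h')\sim \bar{C}(h_1)$ to an edge $(g_2,T(h'))$ with $\bar{C}(T(h'))\sim \bar{C}(h')\sim \bar{C}(h_1)\sim \bar{C}(h_2)$, and inverting via $T^{-1}$ shows the same bijection restricts to the two sets counted by $n$. Hence $n(\bar{C}(g_1),\bar{C}(h_1))=n(\bar{C}(g_2),\bar{C}(h_2))$. The main obstacle is exactly this sub-cone descent: one has to certify that the isomorphism $T$ living on $\bar{C}(g_1)$ genuinely carries each sub-cone $\bar{C}(h')$ (for $h' \in C(g_1)$) onto $\bar{C}(T(h'))$, both as a vertex set (where Lemma 3.1 is needed to promote preserved norm-increasing paths to geodesics) and as a graph (where Lemma 3.3 is needed to transport the tangent antennas).
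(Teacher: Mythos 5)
Your proposal is correct and follows essentially the same route as the paper: the $m$-equality comes from counting the edges at the root that survive into the extended cone and subtracting from the (constant) degree, and the $n$-equality comes from the same key technical step, namely that $T$ restricted to a sub-cone $\bar{C}(h')$ is again a strong cone equivalence onto $\bar{C}(Th')$, proved via Lemmas 3.1, 3.3 and 3.4 and then symmetrized with $T^{-1}$. The only place the paper is more careful is in transporting the antennas of $\bar{C}(h')$: a tangent edge $(x,z)$ with $x\in C(h')$ may have $z\notin C(g_1)$, in which case Lemma 3.3 (stated for edges of $C(g_1)$) does not literally apply and one must instead observe that such an edge is an antenna of $\bar{C}(g_1)$, hence is carried by the strong equivalence to an antenna of $\bar{C}(g_2)$, which is tangent by construction.
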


	\begin{proof}
	Let $T:\bar{C}(g_1) \rightarrow \bar{C}(g_2)$ be the corresponding strong cone equivalence.
	\\For the $m(\cdot)$ values, observe that $A^+_{g_i} \cup A^t_{g_i} = A_{g_i} \cap \bar{C}(g_i)$ ($i=1,2$). Since $T$ is a graph automorphism,
	$$|A^+_{g_1}|+|A^t_{g_1}| = |A_{g_1} \cap \bar{C}(g_1)| = deg_{\bar{C}(g_1)}g_1 $$ $$ =  deg_{\bar{C}(g_2)}g_2 = |A_{g_2} \cap \bar{C}(g_2)| = |A^+_{g_2}|+|A^t_{g_2}|,$$
	where $deg_{\cdot}\cdot$ is the degree of a vertex w.r.t. the indicated graph, and so
	$$ m(\bar{C}(g_1))= |A_{g_1}|-|A^t_{g_1}|-|A^+_{g_1}| =  |A_{g_2}|-|A^t_{g_2}|-|A^+_{g_2}| = m(\bar{C}(g_2)). $$
	For the $n(\cdot,\cdot)$ values, first observe that $n(\bar{C}(g_1),\bar{C}(h_1)) = n(\bar{C}(g_1),\bar{C}(h_2))$ follows easily from the definition of $n(\cdot,\cdot)$and the transitivity of strong cone type equivalence. Hence it remains to show $n(\bar{C}(g_1),\bar{C}(h_2)) = n(\bar{C}(g_2),\bar{C}(h_2))$. 
	\\Let $k\in A^+_{g_1}$ such that $\bar{C}(k) \sim \bar{C}(h_2)$. We show that $\bar{C}(Tk) \sim \bar{C}(k)$ and T restricted to $\bar{C}(k)$ is the map for this strong cone equivalence. Since T is injective, looking at $A^+_{g_1}$, this will imply that $n(\bar{C}(g_1),\bar{C}(h_2)) \leq n(\bar{C}(g_2),\bar{C}(h_2))$.
	\\Obviously T restricted to $\bar{C}(k)$ is a graph automorphism onto its image and sends the root vertex of $\bar{C}(k)$ to the root vertex of $\bar{C}(T(k))$ (also, T is well defined on its whole domain by the Lemma 3.4). What remains to be shown is that $T[C(k)] = C(Tk)$ and $T[\bar{C}(k)] = \bar{C}(Tk)$.
	\\Let $x \in C(k)$. Then there exists a geodesic path from $1$ to $g_1$ to $k$ and a geodesic path from $1$ to $k$ to $x$ which we may combine into a geodesic path from $1$ to $g_1$ to $k$ to $x$. Denote this geodesic by $p$. By Lemma 3.1, $p$ consists of only norm increasing edges. Denote the sub-path of $p$ from $g_1$ to $x$ by $p'$. By Lemma 3.3 $Tp'$,the image of $p'$ under graph automorphism T on $C(g_1)$ also consists of norm increasing edges. Combining $Tp'$ with any geodesic path from $1$ to $g_2$, we get a geodesic path from $1$ to $h$ to $Tk$ to $Tx$, thus $Tx\in C(Tk)$ and $T[C(k)] \subseteq C(Tk)$
	\begin{figure}[h]
		\begin{center}
			\includegraphics[scale=0.20]{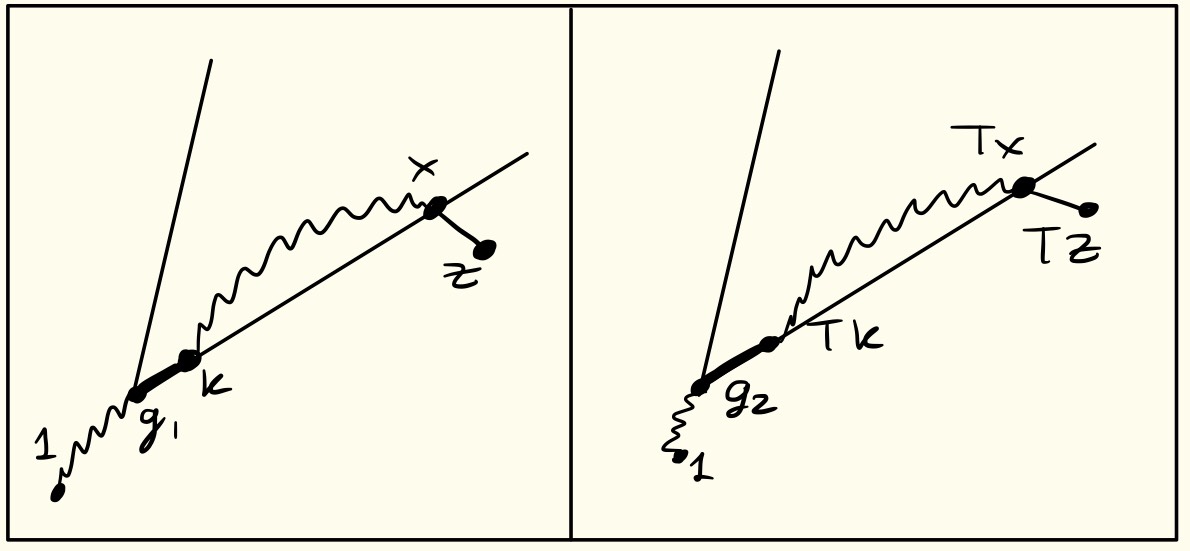}
			\caption{Picture illustrating the proof of Lemma 3.5}
			\label{fig:3.6.1}
		\end{center}
	\end{figure}
	\\
	Since T is bijective, we may apply exactly the same argument to $y\in C(Tk)$ obtaining $T^{-1}y\in C(k)$, and hence we get $T[C(k)] = C(Tk)$\\
	Next let $(x,z)\in \bar{C}(k)$ be a tangent edge with $x\in C(k)$ and $z \notin C(k)$. Then there are two cases, either $(x,z)\in C(g_1)$ or $(x,z)\notin C(g_1)$. If $(x,z)\in C(g_1)$, then by Lemma 3, $(Tx,Tz)$ is a tangent edge of $C(g_2)$. Since $Tx\in C(Tk)$, this means that $(Tx,Tz)\in \bar{C}(Tk)$ and $Tz \in \bar{C}(Tk)$. On the other hand, if $(x,z)\notin C(g_1)$ then  $(Tx,Tz)\notin C(g_2)$ (since T maps $C(g_1)$ to $C(g_2)$) and so $(Tx,Tz)$ must be tangent (by the definition of strong cone equivalence). Once again, since $Tx\in C(Tk)$, this means that $(Tx,Tz)\in \bar{C}(Tk)$ and $Tz \in \bar{C}(Tk)$. We get that $T[\bar{C}(k)] \subseteq \bar{C}(Tk)$. Applying the exact same argument with $T^{-1}$, we get equality.
	Finally, as noted in step 4, we have shown that $n(\bar{C}(g_1),\bar{C}(h_2)) \leq n(\bar{C}(g_2),\bar{C}(h_2))$. Applying the same argument but switching $g_1$ with $g_2$ we get the reverse inequality and thus $n(\bar{C}(g_1),\bar{C}(h_2)) = n(\bar{C}(g_2),\bar{C}(h_2))$ as we wanted.
	\end{proof}	
	
	Suppose that $\Gamma(G,S)$ has \textit{finitely many extended cone types}.
	Denote them by $T=\{t_0, t_1, .. t_q\}$ (where $q\in N$) so that $\bar{C}(1) \in t_0$.
	By the above lemma, we may choose representative extended cones $\bar{C}(1), \; \bar{C}(g_1),...\; \bar{C}(g_q)$ from each extended cone type, and obtain well defined numbers $m_j = m(t_j) := m(\bar{C}(g_j))$ and $n_{i,j} = n(t_i, t_j) := n(\bar{C}(g_i),\bar{C}(g_j))$ for all $t_i, t_j \in T$.
	
	\begin{theorem}
	Let G be a group with finite generating set S such that $\Gamma(G,S)$ has
	finitely many extended cone types. Then the growth series $\Delta(z)$ is rational and
	its presentation in the form of a ratio $\dfrac{P(z)}{Q(z)}$ of two polynomials
	can be computed using the data $((t_j)_j, (m_j)_j, (n_{i,j})_{i,j})$ about the set of extended cone types.
	In fact, $\Delta(z) = \sum_{j=0}^q f_j(z)$, where $(f_j (z))_{j=0}^q$ are obtained by solving the linear system of equations:
	$$ f_0(z) = 1, \;\;\; f_j(z) =  \dfrac{z}{m_j} \sum_{i=0}^q n_{i,j} f_i(z) \;\; \forall j \neq 0.$$
	\end{theorem}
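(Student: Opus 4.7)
\medskip

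\textbf{Proof proposal.} The plan is to set up a generating function $f_j(z)$ for each extended cone type $t_j$ that records, weighted by $z^{|g|}$, exactly those elements $g \in G$ with $\bar{C}(g) \in t_j$, and then extract the claimed system of equations by double-counting norm-decreasing edges, relying crucially on Lemma~3.5 to guarantee that the counts depend only on extended cone types.

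First I would define, for each $j = 0, 1, \dots, q$,
\begin{equation*}
f_j(z) \;=\; \sum_{n=0}^{\infty} \bigl|\{g \in G : |g| = n,\ \bar{C}(g) \in t_j\}\bigr|\, z^n .
\end{equation*}
Since every $g \in G$ lies in exactly one extended cone type, $\sum_{j=0}^q f_j(z) = \Delta(z)$, giving the final identity once the $f_j$ are determined. Next I would verify the initial condition $f_0(z) = 1$: the full Cayley graph $\Gamma = \bar{C}(1)$ has the identity as root, and any graph isomorphism of $\bar{C}(g)$ onto $\bar{C}(1) = \Gamma$ sending $g$ to $1$ is impossible when $g \neq 1$ because $C(g)$ does not contain $1$ (so $C(g) \subsetneq \Gamma = C(1)$ cannot be matched bijectively with $C(1)$ via an isomorphism preserving the root). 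Hence the only element of type $t_0$ is the identity, yielding $f_0(z) = 1$.

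The core step is the recursion for $j \neq 0$. For a fixed $j \neq 0$ and fixed $n \geq 1$, I would count in two ways the set of ordered pairs $(g, g')$ with $|g| = n$, $\bar{C}(g) \in t_j$ and $(g, g') \in A_g^-$ (a norm-decreasing edge). On the one hand each such $g$ contributes exactly $|A_g^-| = m_j$ pairs, where by Lemma~3.5 this count is intrinsic to the type $t_j$. On the other hand, grouping by $g'$ of norm $n - 1$ and extended cone type $t_i$, each such $g'$ contributes exactly $n_{i,j}$ pairs, again intrinsic to $t_i$ and $t_j$ by Lemma~3.5 (note that $(g, g') \in A_g^-$ iff $(g', g) \in A_{g'}^+$, and this norm-increasing edge from $g'$ lands inside $C(g')$ since $g$ is then reachable by a geodesic through $g'$). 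Equating and multiplying by $z^n$, then summing over $n \geq 1$, gives
\begin{equation*}
m_j\, f_j(z) \;=\; z \sum_{i=0}^{q} n_{i,j}\, f_i(z) \qquad (j \neq 0),
\end{equation*}
which is exactly the stated system. The main obstacle here will be the careful verification that $n_{i,j}$ genuinely depends only on the pair of types $(t_i, t_j)$ and not on chosen representatives, together with bookkeeping the convention that norm-increasing edges from $g'$ always lie inside $C(g')$ so that Lemma~3.5 applies.

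Finally I would handle existence, uniqueness, and rationality of the solution. Writing $\mathbf{f}(z) = (f_0(z), \dots, f_q(z))^T$, the system together with $f_0(z) = 1$ takes the linear form $A(z)\mathbf{f}(z) = \mathbf{b}$, where $A(z)$ has polynomial entries of degree at most $1$. Evaluating at $z = 0$ gives $A(0) = \mathrm{diag}(1, m_1, \dots, m_q)$, which is invertible since $m_j \geq 1$ for $j \neq 0$ (any $g \neq 1$ has at least one norm-decreasing neighbor on a geodesic back to $1$). Therefore $\det A(z)$ is a polynomial with $\det A(0) \neq 0$, so $A(z)^{-1}$ exists over the field of rational functions and Cramer's rule yields each $f_j(z)$ as a ratio of polynomials. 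Summing gives $\Delta(z) = P(z)/Q(z)$ as required, and the entire computation is algorithmically determined by the finite data $((t_j), (m_j), (n_{i,j}))$.
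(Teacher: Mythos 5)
Your proposal is correct and follows essentially the same route as the paper: the same generating functions $f_j$ (the paper's $a_{n,j}$), the same double count of norm-decreasing edges between consecutive spheres to get $m_j f_j(z) = z\sum_i n_{i,j} f_i(z)$, and the same passage to the linear system and to $\Delta(z)=\sum_j f_j(z)$. Your two additions are minor but welcome: a (somewhat informal) justification that only the identity has type $t_0$, and the observation that $A(0)=\mathrm{diag}(1,m_1,\dots,m_q)$ is invertible because $m_j\geq 1$ for $j\neq 0$, which secures unique solvability and rationality where the paper leaves this implicit.
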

	
	\begin{proof}
	Let $a_{n,i}$ denote the number of elements $g \in G$ s.t. $|g|=n$ and $\bar{C}(g) \in t_i$. 
		Then $$ a_{n,0}= \begin{array}{cc}
					1 & n=0 \\
					0 & n>0\\
					\end{array} \;\;\;\; and \;\;\; a_{0,j} = 0 \;\; for\; j \neq 0,$$
		along with the following recursive equations:
		$$ a_{n,j} = \dfrac{1}{m_j} \sum_{i=0}^q n_{i,j} a_{n-1,i} \;\;\; for \; n>0 \; j \neq 0.$$
		which follow easily from the definitions of $m_j$ and $n_{i,j}$ by the famous "double-counting" technique from enumerative combinatorics:
		\\We count $|\{(v,w)\in \Gamma : |v|+1=|w|=n,\; \bar{C}(w) \in t_j\}|$ in two different ways. First, for each vertex $v$ of length $n-1$, we count the number of vertices $w$ with length $n$ which are adjacent to $v$ and have cone type $t_j$. If $v$ has cone type $t_i$, then for each $v$ we count exactly $n_{i,j}$-many such vertices $w$. We have exactly $a_{n-1,i}$ such vertices $v$ with cone type $t_i$, therefore our count yields $\sum_{i=0}^q n_{i,j}a_{n-1,i}$. On the other hand, we get the exact same count if we consider all vertices $w$ with length $n$ which have cone type $t_j$ and count the number of vertices $v$ of length $n-1$ which are adjacent to $w$. By definition, the number of such vertices $v$ is $m_j$ and the number of vertices $w$ with length $n$ which have cone type $t_j$ is $a_{n,j}$, so our second count yields $m_j a_{n,j}$. Equating our two counts gives us the recursive relation.
		\\Next define $\forall j = 0 ... q$
		$$ f_j(z) = \sum_{n=0}^\infty a_{n,j}z^n.$$
		Then the previous relations become:
		$$ f_0(z) = 1, \;\;\; f_j(z) =  \dfrac{z}{m_j} \sum_{i=0}^q n_{i,j} f_i(z) \;\; j \neq 0.$$
		Therefore, we can use the data $(T, (m_j)_j, (n_{i,j})_{i,j})$ to compute the functions $(f_j)_j$ as solutions to the linear system of equations above. Every group element belongs to exactly one extended cone type and so
		$$\delta(n)=\sum_{j=o}^q a_{n,j}\;\;\implies \;\; \Delta(z) = \sum_{j=0}^q f_j(z),$$
		from which we obtain the spherical growth series $\Delta(z).$ 
		Finally, since the coefficients in the linear system of the $f_j$s only involve rational functions,
		the $f_j$s will be rational and hence our growth series $\Delta(z)$ will also be rational.
	\end{proof}	

	Finally, we may wish to visually display the data $((t_j)_j, (m_j)_j, (n_{i,j})_{i,j})$ in a directed graph which we shall call the \textbf{extended cone type diagram}. Its vertices will be the extended cone types, and number of directed edges from an extended cone type $t_i$ to another $t_j$ will be the number $n(t_i,t_j)$. Furthermore, we may mark next to an extended cone type its associated $m(\cdot)$ value, omitting writing it when the value is 1. For each of the 7 cases that we consider in section 4, we write down the associated extended cone type diagram.

	\subsection{Equations for the Geodesic Growth}

	In this section we present a method for finding the geodesic growth of a group using the notions of (graph) cones and weak cone equivalence. The method is similar to Cannon's equations, however, its origins are from the DSV method for computing the growth of context free languages (named after Delest, Schuetzenberger,and Viennot) \cite{ChomskySchutzenbergerDSVMethod}. This section will not be needed in the subsequent sections.
	
	As we have already seen in Section 3.1, to every geodesic word $w \in Geo(G,S)$ corresponds a geodesic path from $1$ to $\bar{w}$, and thus the geodesic growth of a group can be expressed as the number of geodesics starting from $1$ of a given length. This remark motivates the following:
	
	\begin{definition}
	For $g\in G$, the \textbf{geodesic growth function} of a graph cone $C(g)$ is
	$$ l_{C(g)}(n) = |\{w \in Cone(g): |w|=n\}|, $$
	i.e. the number of geodesics in $C(g)$ starting at $g$ with length $n$.
	\end{definition}
	
	It easily follows that if two cones $C(g_1)$ and $C(g_2)$ are weakly equivalent, then $l_{C(g_1)}(n) = l_{C(g_2)}(n)$.
	
	Suppose for the rest of this section that $\Gamma$ has finitely many cone types $\{t_0, t_1, ... t_q\}$. Via the previous remark, we can associate to each cone type $t_j$ its \textbf{geodesic growth function} $l_j(n)$.
	We can then also define the corresponding \textbf{geodesic growth series} $L_j(z) = \sum_{n=0}^{\infty} l_j(n)z^n$.
	
	Next we consider the analogue of the n-values in section 3.7, this time using cones instead of extended cones. For each $g,h \in G$, let $k(C(g),C(h))$ be the number of vertices in $A_+^g$ whose cones are weakly equivalent to $C(h)$.
	
	In a similar manner as in Lemma 3.5, it follows that 
	$$ C(g_1) \sim C(g_2) \;and\; C(h_1) \sim C(h_2) \implies $$ $$k(C(g_1),C(h_1)) = k(C(g_2),C(h_2)). $$	
	This property allows us to obtain well defined numbers $(k_{i,j})_{i,j}$ which can be used to compute the geodesic growth of a group with finitely many cone types:
	
	\begin{theorem}
	Let G be a group with finite generating set S such that $\Gamma(G,S)$ has
	finitely many cone types. Then the geodesic growth series $L(z)$ is rational and
	its presentation in the form of a ratio $\dfrac{P(z)}{Q(z)}$ of two polynomials
	can be computed using the data $((t_j)_j, (k_{i,j})_{i,j})$ about the set of cone types.
	\end{theorem}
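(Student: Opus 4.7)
The plan is to mirror the proof of the preceding theorem on spherical growth, but work at the level of \emph{words} rather than group elements, using (non-extended) cones together with the numbers $k_{i,j}$ in place of $m_j$ and $n_{i,j}$. The driving observation is Lemma 3.1: any geodesic path consists exclusively of norm-increasing edges. In word-language this says that every geodesic word $w \in Cone(g)$ of positive length factors uniquely as $w = s w'$ with $s \in A$, the edge $(g, sg)$ norm-increasing, and $w' \in Cone(sg)$.

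First I fix, once and for all, a representative $g_j$ of each cone type $t_j$, with $g_0 = 1$, and set $l_j(n) := l_{C(g_j)}(n)$; this is well defined by weak cone equivalence. Because $C(1) = \Gamma$, the words in $Cone(1)$ are exactly the geodesic words of $G$, so $l_0(n) = l(n)$ and $L_0(z) = L(z)$. For $n \geq 1$, partitioning the words of length $n$ in $Cone(g_j)$ by their first letter, then grouping the resulting norm-increasing neighbors by cone type, yields the recursion
$$l_j(n) \;=\; \sum_{i=0}^{q} k_{j,i}\, l_i(n-1), \qquad l_j(0) = 1,$$
with the invariance statement displayed just above the theorem ensuring that the $k_{j,i}$ are well defined.

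Translating to generating series $L_j(z) = \sum_{n \geq 0} l_j(n)z^n$, this becomes the linear system
$$L_j(z) \;=\; 1 + z \sum_{i=0}^{q} k_{j,i}\, L_i(z), \qquad j = 0,\dots,q,$$
or, in matrix form, $(I - zK)\vec{L}(z) = \vec{\mathbf{1}}$ with $K = (k_{j,i})$. Since $\det(I - zK)$ takes the value $1$ at $z = 0$, it is a nonzero polynomial in $\mathbb{Q}[z]$, and Cramer's rule expresses every $L_j(z)$, and in particular $L(z) = L_0(z)$, as a ratio of polynomials whose coefficients are read off directly from the data $((t_j)_j,(k_{i,j})_{i,j})$.

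The one step demanding honest care is the first-letter factorization: that $w \mapsto (s, w')$ is a bijection between geodesic words of length $n$ in $Cone(g_j)$ and pairs $(s, w')$ with $s \in A$, $(g_j, sg_j) \in A^{+}_{g_j}$, and $w' \in Cone(sg_j)$ of length $n-1$. Uniqueness is clear (distinct $s \in A$ produce distinct neighbors $sg_j$), while the two inclusions follow by concatenating the relevant geodesic paths and invoking Lemma 3.1. Once this bookkeeping is set up, the passage from the recursion to rationality is purely formal and parallels the closing lines of the previous proof.
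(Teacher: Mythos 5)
Your argument is correct and follows essentially the same route as the paper: the first-letter factorization gives the recursion $l_j(n)=\sum_i k_{j,i}\,l_i(n-1)$ with $l_j(0)=1$, which translates into the linear system $L_j(z)=1+z\sum_i k_{j,i}L_i(z)$ whose solution is rational. You are in fact a bit more careful than the paper, supplying the bijection behind the recursion and the observation that $\det(I-zK)$ is nonzero at $z=0$, which the paper leaves implicit.
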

	
	\begin{proof}
	The following recursive relation follows easily:
	$$l_i(n) = \sum_{j=0}^q k_{i,j}\;l(n-1).$$
	Looking at the corresponding growth series, this relation becomes:
	$$L_i(z) = 1 + z \cdot \sum_{j=0}^q k_{i,j}\;L_j(z). $$
	These relations hold for each $i = 0...q$ hence we have a systems of linear equations with 1 variable z which means that the solution $L(z)=L_0(z)$ will be a rational function in z.
	\end{proof}	
	
	\subsection{Describing Cones and Extended Cones}

	So far, we have discussed the use of cones and extended cones in determining the growth of groups.
	However, in order to apply these methods to any Cayley graph, one needs to be able to rigorously find the vertices and edges that comprise a given cone and extended cone. In this section, we provide a general procedure for this task, provided that one can find the cones and extended cones of $S \cup S^{-1}$ and one can translate and intersect subsets of the group easily. For the next proposition, recall that $\forall g \in G$, $A_g^+$ is the set of all $h \in G$ s.t. $(g,h)$ is an order increasing edge.
	
	\begin{proposition}
		For $g \in G$, $s \in S \cup S^{-1}$ and $sg \in A^+_g$,
		$$C(sg)\;=\; C(g) \; \cap \; C(s)\cdot g.$$
	\end{proposition}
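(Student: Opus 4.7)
The plan is to establish the equality of vertex sets of $C(sg)$ and $C(g) \cap C(s)\cdot g$; since each of these three cones is an induced subgraph of $\Gamma$ on its vertex set (cones are defined to contain all edges of $\Gamma$ between their vertices), equality of vertex sets automatically yields equality as subgraphs. The key structural fact I would invoke is that right multiplication by $g$ is an automorphism of $\Gamma$ (by right-invariance of $d_\Gamma$), sending $1 \mapsto g$ and $s \mapsto sg$. Consequently, $h \in C(s)\cdot g$ if and only if $hg^{-1}\in C(s)$, which translates via this automorphism to the statement that some geodesic from $g$ to $h$ passes through $sg$.

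For the inclusion $C(sg) \subseteq C(g) \cap C(s)\cdot g$, I would take $h \in C(sg)$ and fix a geodesic $\pi$ from $1$ to $h$ passing through $sg$. The hypothesis $sg \in A_g^+$ means $|sg| = |g|+1$, so a geodesic from $1$ to $sg$ may be constructed as a geodesic from $1$ to $g$ followed by the single edge $(g,sg)$. Replacing the initial segment of $\pi$ with this construction yields a geodesic from $1$ to $h$ that passes through both $g$ and $sg$; this simultaneously witnesses $h \in C(g)$ and, after restriction to the sub-path from $g$ onward, witnesses $h \in C(s)\cdot g$.

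For the reverse inclusion $C(g) \cap C(s)\cdot g \subseteq C(sg)$, suppose $h$ lies in both. From $h \in C(g)$, Lemma 3.2 gives $d_\Gamma(g,h) = |h|-|g|$. From $h \in C(s)\cdot g$, pick a geodesic from $g$ to $h$ passing through $sg$; by the previous sentence this path has length $|h|-|g|$. Concatenating any geodesic from $1$ to $g$ with this path produces a walk from $1$ to $h$ of length $|g| + (|h|-|g|) = |h|$ passing through $sg$, which is therefore a geodesic realizing $h \in C(sg)$.

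The main obstacle I foresee is not conceptual but notational: correctly tracking that $C(s)\cdot g$ denotes the right-translated subgraph rather than some object defined intrinsically at $g$, and consistently converting between ``geodesic from $1$ through $x$'' and ``geodesic from $g$ through $xg$'' via right-invariance of $d_\Gamma$. Once this bookkeeping is in place, both directions reduce to the familiar splicing of geodesic paths, with Lemma 3.1 ensuring that concatenations of norm-increasing segments remain geodesic.
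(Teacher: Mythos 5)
Your proof is correct and takes essentially the same route as the paper's: both inclusions are handled by splicing geodesic segments and transporting them between the basepoints $1$ and $g$ via the right-multiplication isometry of $\Gamma$. Your explicit length bookkeeping via Lemma 3.2 in the reverse inclusion just makes precise the concatenation step that the paper's proof leaves implicit.
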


	\begin{figure}[h]
		\begin{center}
			\includegraphics[scale=0.26]{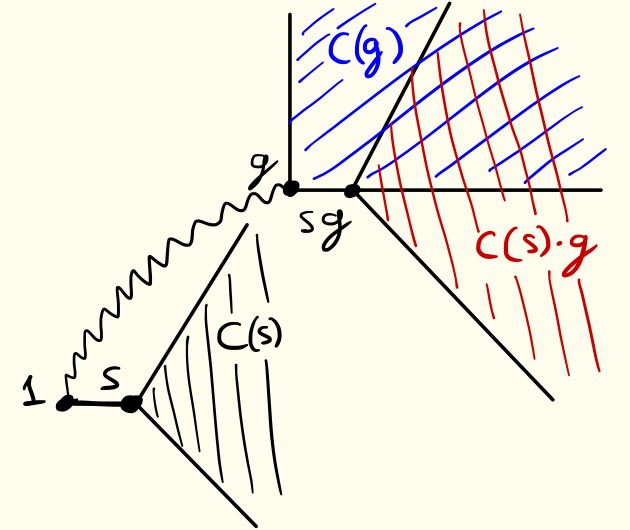}
			\caption{Illustration of Proposition 3.8}
			\label{fig:3.8.1}
		\end{center}
	\end{figure}

	\begin{proof}	 
	$\subseteq$: If $k \in C(sg)$, we get a geodesic from $1$ to $sg$ to $k$. Since $sg \in C(g)$, we also get a geodesic from $1$ to $g$ to $sg$. 
	We may then combine the parts of the above geodesics to construct a new geodesic path from $g$ to $sg$ to $k$. 
	Since right multiplication maps are isometries on $\Gamma$, we may translate the above path to get a geodesic from $gg^{-1} = 1$ to $sgg^{-1} = s$ to $kg^{-1}$, meaning that $kg^{-1} \in C(s)$, i.e. $k\in C(s)$.
	\\
	$\supseteq$: If $k\in C(g)\cap C(s)\cdot g$, we get a geodesic from $1$ to $s$ to $kg^{-1}$ which (via the right multiplication map) translates to a geodesic from $g$ to $sg$ to $k$. We also have a geodesic from $1$ to $g$ to $k$. Combining these two geodesics, we get another geodesic from $1$ to $g$ to $sg$ to $k$, therefore $k \in C(sg)$.
	\end{proof}
	
	\begin{proposition}
			For $g \in G$, $s \in S \cup S^{-1}$ s.t. $sg \in A^+_g$ and for $h \in C(sg)$, $k \not\in C(sg)$, \\
		$(h,k)$ is a tangent edge of $\bar{C}(sg)$
		precisely when one of two cases holds:
		\\ \textit{Case 1}: 
		$(h,k)$ is a tangent edge of $\bar{C}(g)$. 
		with $k\not\in C(g)$
		\\ \textit{Case 2}:
		$(hg^{-1},kg^{-1})$ is a tangent edge of $\bar{C}(s)$
		with $kg^{-1} \not\in C(s)$ but $k\in C(g)$.
	\end{proposition}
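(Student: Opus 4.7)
The plan is to reduce everything to Proposition 3.8's identity $C(sg) = C(g) \cap C(s)\cdot g$, together with two facts already in hand: the containment $C(sg) \subseteq C(g)$ from Lemma 3.4, and the right-invariance of $d_\Gamma$, which makes right multiplication by $g^{-1}$ a graph isometry sending the edge $(h,k)$ to $(hg^{-1},kg^{-1})$ and sending the tangent condition $|h|=|k|$ to $|hg^{-1}|=|kg^{-1}|$. The natural dichotomy that drives the two cases is simply whether $k\in C(g)$ or $k\notin C(g)$: Case 1 handles the first and Case 2 handles the second, via Proposition 3.8.

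For the forward direction, I would assume $(h,k)$ is a tangent edge of $\bar{C}(sg)$ with $h\in C(sg)$, $k\notin C(sg)$, so $|h|=|k|$. If $k\notin C(g)$, then since $h\in C(sg)\subseteq C(g)$ by Lemma 3.4, the edge $(h,k)$ is automatically a tangent edge of $\bar{C}(g)$ attached to $h\in C(g)$ with its other endpoint outside $C(g)$; this is exactly Case 1. If instead $k\in C(g)$, then Proposition 3.8 combined with $k\notin C(sg)$ forces $k\notin C(s)\cdot g$, i.e.\ $kg^{-1}\notin C(s)$. On the other hand $h\in C(sg)\subseteq C(s)\cdot g$ gives $hg^{-1}\in C(s)$. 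Applying the isometry of right multiplication by $g^{-1}$, the edge $(hg^{-1},kg^{-1})$ is an edge of $\Gamma$ with $|hg^{-1}|=|h|-|g|=|k|-|g|=|kg^{-1}|$ (using Lemma 3.2), hence a tangent edge of $\bar{C}(s)$, producing Case 2.

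The reverse direction is a short verification in each case. In Case 1, the tangent condition $|h|=|k|$ is given, and $k\notin C(g)\supseteq C(sg)$ automatically gives $k\notin C(sg)$, so $(h,k)$ is tangent in $\bar{C}(sg)$. In Case 2, the assumption that $(hg^{-1},kg^{-1})$ is tangent in $\bar{C}(s)$ gives $|hg^{-1}|=|kg^{-1}|$; since both $h,k\in C(g)$, Lemma 3.2 converts this into $|h|=|k|$, and the edge $(h,k)$ in $\Gamma$ is the right-translate of $(hg^{-1},kg^{-1})$. Finally, the hypotheses $k\in C(g)$ and $kg^{-1}\notin C(s)$ together with Proposition 3.8 yield $k\notin C(sg)$.

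The main obstacle I anticipate is purely bookkeeping: making sure at every step that an edge which \emph{is} tangent in $\Gamma$ actually lies in the relevant extended cone, which requires carefully tracking that one of its endpoints lies in $C(sg)$, $C(g)$, or $C(s)$ as appropriate. This is where the containments $C(sg)\subseteq C(g)\cap C(s)\cdot g$ from Proposition 3.8 and $C(sg)\subseteq C(g)$ from Lemma 3.4 do the real work, and where one must be careful not to confuse the vertex-level condition ($h\in C(\cdot)$) with the edge-level condition ($(h,k)$ is a tangent edge of $\bar{C}(\cdot)$).
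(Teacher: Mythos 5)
Your proposal is correct and follows essentially the same route as the paper: the same dichotomy on whether $k\in C(g)$, with Case 1 handled by the containment $C(sg)\subseteq C(g)$ and Case 2 by Proposition 3.8 together with the right-translation by $g^{-1}$. The only cosmetic difference is that you invoke Lemma 3.2 to compare lengths where the paper explicitly concatenates and translates geodesics, but the underlying argument is identical.
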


	\begin{figure}[h]
		\begin{center}
			\includegraphics[scale=0.26]{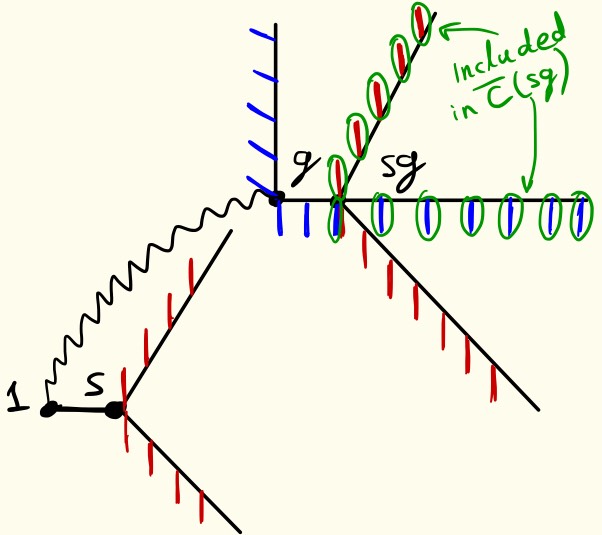}
			\caption{Illustration of Proposition 3.9}
			\label{fig:3.8.2}
		\end{center}
	\end{figure}

	\begin{proof}
	$\leftarrow$:
	If case 1 holds, then tautologically $(h,k)$ is a tangent edge of $\Gamma$ with $h\in C(sg)$ so by definition $(h,k)$ is in $\bar{C}(sg)$.\\
	If case 2 holds, there are two geodesics of equal length, one from $1$ to $s$ to $hg^{-1}$ and another from $1$ to $kg^{-1}$. These two geodesics translate to geodesics from $g$ to $sg$ to $h$ and from $g$ to $k$. \\
	Since $h \in C(sg)$, $h \in C(g)$ and together with the assumption that $k \in C(g)$, we may extend these two geodesics from $1$ to $g$ to $sg$ to $h$ and from $1$ to $g$ to $k$. Via our construction these two geodesics will still have the same length, ergo $(h,k)$ is tangent in $\Gamma$ and the claim follows.\\
	$\rightarrow$:
	We again split into two cases:
	\\Case 1: $k \not\in C(g)$ Then since $k \in C(sg) \subseteq C(g)$, by the definition of an extended cone type, we must have that $(h,k) \in \bar{C}(g)$.
	\\Case 2: $k \in C(g)$ we can a geodesics from $1$ to $g$ to $k$, and from $1$ to $g$ to $sg$ to $h$. Considering the subpaths from $g$ to $k$, and from $g$ to $sg$ to $h$ and translating them we get geodesics from $1$ to $kg^{-1}$ and from $1$ to $s$ to $hg^{-1}$. These paths will have the same length since $(h,k)$ is tangent ergo $(hg^{-1},kg^{-1})$ is a tangent edge in $\bar{C}(s)$. Finally, from Lemma 6, $k \in C(g)$ but $k \not\in C(sg)$ means that $k \not\in C(s) \cdot g$.
	\end{proof}

	The importance of above two propositions is immense. We have essentially reduced the problem of finding the cones and extended cones into:
	\\i) Finding the cones $C(s)$ where $s \in S \cup S^{-1}$.
	\\ii) Finding the edges that are in an extended cone $\bar{C}(s)$ but are not in $C(s)$ for $s \in S \cup S^{-1}$.	
	\\iii) Translating (via the group action of $G$ on $\Gamma$) and intersecting the cones in i) along with the edges in ii)
	\\In order to achieve these three goals, we really need to use specific properties of our 7 Cayley graphs, one key property being their embeddings into $\mathbb{R}^2$. Another proposition follows:
	
	\begin{proposition}
		Suppose $i:\Gamma \to \mathbb{R}^2$ is a graph embedding, $g \in \Gamma$, and there are sub-graphs $S_{in} \subset C(g)$, $S_{out} \subset \Gamma \setminus C(g)$ s.t.
		\\a) $i[S_{out}]$ splits $\mathbb{R}^2$ into 2 disconnected domains $D_1$ and $D'$. 
		\\b) $i[S_{in}]\subset D'$ and splits $D'$ into 2 disconnected domains $D_2$ and $D_3$.
		\\c) $1\in i^{-1}[D_1] \cup S_{out}$ but $g\not\in i^{-1}[D_1] \cup S_{out}$ 
		\\ Then $i^{-1}[D_3] \cup S_{in} \subseteq C(g) \subseteq \Gamma \setminus (i^{-1}[D_1] \cup S_{out})$
	\end{proposition}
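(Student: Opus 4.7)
The plan is to prove the two inclusions separately, both via a Jordan-curve style crossing argument combined with the graph-embedding convention that edges meet only at common endpoints, so that whenever a path in $\Gamma$ (viewed as a curve in $\mathbb{R}^2$) crosses an embedded sub-graph, the crossing must occur at a common vertex.

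For the upper inclusion $C(g) \subseteq \Gamma \setminus (i^{-1}[D_1] \cup S_{out})$, the part $S_{out} \cap C(g) = \emptyset$ is immediate from the hypothesis $S_{out} \subset \Gamma \setminus C(g)$. Suppose for contradiction that some $v \in i^{-1}[D_1]$ lies in $C(g)$, and fix a geodesic $p$ from $1$ through $g$ to $v$. Since $g \notin i^{-1}[D_1] \cup S_{out}$ we have $i(g) \in D'$, while $i(v) \in D_1$, so the sub-path of $p$ from $g$ to $v$ must meet the separating set $i[S_{out}]$, and by the embedding convention this meeting occurs at some vertex $w \in S_{out}$. The prefix of $p$ from $1$ to $w$ is then itself a geodesic passing through $g$, so $w \in C(g)$, contradicting $S_{out} \cap C(g) = \emptyset$.

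For the lower inclusion $i^{-1}[D_3] \cup S_{in} \subseteq C(g)$, the containment $S_{in} \subset C(g)$ is given. For $v \in i^{-1}[D_3]$ I would pick any geodesic $q$ from $1$ to $v$: since $i(1) \in D_1 \cup i[S_{out}]$ while $i(v) \in D_3$, and $i[S_{in}]$ encloses $D_3$ inside $D'$, the curve $i(q)$ must meet $i[S_{in}]$, so $q$ passes through some vertex $u \in S_{in}$. Now $u \in C(g)$ gives $d(1,u) = d(1,g) + d(g,u)$ directly from the definition of $C(g)$, while $u$ lying on the geodesic $q$ gives $d(1,v) = d(1,u) + d(u,v)$. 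Combining these with the triangle inequality,
\begin{equation*}
d(1,v) \;=\; d(1,g) + d(g,u) + d(u,v) \;\geq\; d(1,g) + d(g,v) \;\geq\; d(1,v),
\end{equation*}
so $d(1,v) = d(1,g) + d(g,v)$; concatenating a geodesic $1 \to g$ with a geodesic $g \to v$ then produces a geodesic from $1$ to $v$ through $g$, i.e.\ $v \in C(g)$.

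The main obstacle is the topological step in the lower inclusion: justifying that every path in the embedded graph from $1$ to a vertex of $i^{-1}[D_3]$ necessarily contains a vertex of $S_{in}$. This requires $\partial D_3 \subseteq i[S_{in}]$, i.e.\ that $S_{in}$ entirely encloses $D_3$ (otherwise a path could sneak into $D_3$ across $i[S_{out}]$ instead). In the seven wallpaper Cayley graphs of Section~4 the proposition will be applied in precisely this configuration, so the topological claim follows from a Jordan-type separation argument applied to $i[S_{in}]$ together with the edge-meeting convention for embeddings. Once this step is granted, the metric computations above are routine.
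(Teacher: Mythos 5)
Your proof is correct and follows essentially the same route as the paper's: a Jordan-curve crossing argument (using that an embedded path can only meet an embedded sub-graph at a shared vertex) forces any geodesic witnessing $k \in C(g)$ for $k \in i^{-1}[D_1]$ to pass through a vertex of $S_{out}$ after $g$, a contradiction, and forces any geodesic from $1$ to $k \in i^{-1}[D_3]$ to pass through some $u \in S_{in} \subset C(g)$, after which concatenating geodesics $1 \to g \to u$ and $u \to k$ gives $k \in C(g)$. The topological point you flag --- that a path cannot enter $D_3$ across $i[S_{out}]$ while avoiding $i[S_{in}]$, which amounts to requiring $\partial D_3 \subseteq i[S_{in}]$ --- is indeed the one delicate step; the paper's own proof simply asserts that every geodesic from $1$ into $D_3$ meets $S_{in}$, so your explicit identification of the condition (which does hold in all seven applications, where $S_{in}$ is a pair of rays enclosing $D_3$ inside $D'$) is if anything more careful than the original.
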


	\begin{figure}[h]
		\begin{center}
			\includegraphics[scale=0.15]{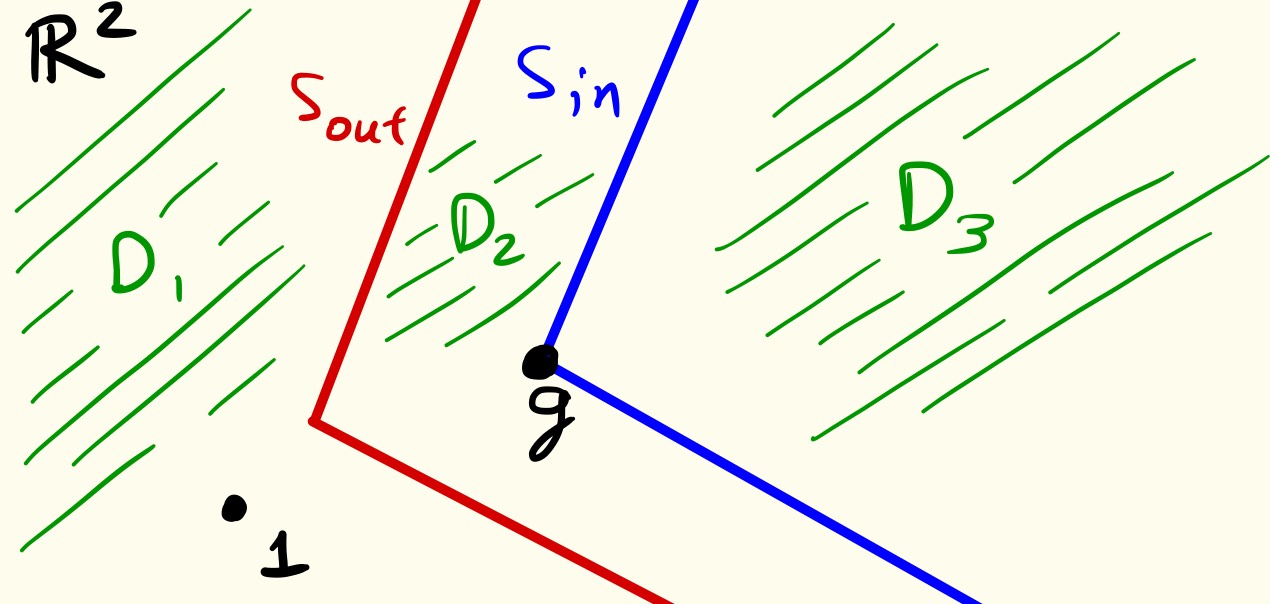}
			\caption{Picture illustrating Proposition 3.10}
			\label{fig:3.8.3}
		\end{center}
	\end{figure}	 
	
	\begin{proof}
	If $k \in i^{-1}[D_1] \cup S_{out}$, then any discrete path from $1$ to $g$ embeds to a continuous path on $\mathbb{R}^2$ which must cross $i[S_{out}]$ and hence the path on $\Gamma$ must pass through a vertex of $S_{out}$. Since $S_{out} \subset \Gamma \setminus C(g)$, we can always replace the subpath from $1$ to $g$ to $S_{out}$ with a strictly shorter one, and thus our initial path cannot be a geodesic.\\
	On the other hand, if $k \in i^{-1}[D_3] \cup S_{in}$, then any geodesic from $1$ to $k$ must pass through a vertex $h \in S_{in}$. But since $S_{in} \subset C(g)$, there is a geodesic from $1$ to $g$ to $h$, which we can combine with the previous geodesic to make a geodesic from $1$ to $g$ to $h$ to $k$, ergo $k \in C(g)$.
	\end{proof}

	The importance of Proposition 3.10 is that it lets us work with the "boundary" of our cones rather than the entire graphs. These "boundaries" will typically consist of 2 rays towards infinity and with a repeating periodic pattern, making them amenable to inductive arguments. Before we move on to applying these propositions to our 7 cases, we will need one more definition: a \textbf{geodesic ray} is an infinite sequence of vertices $\{g_i\}_{i=0}^\infty \subset \Gamma$ s.t. $\forall n \in \mathbb{N}$, $\{g_i\}_{i=0}^n$ is a geodesic from $g_0$ to $g_n$. 
	
\section{The Seven Cases}

\subsection{The Triangular Tiling $3^6$}
		
	Recall that there are 17 wallpaper groups listed in Table 1 (3 of them listed twice with 2 different generating sets). Up to non-oriented unlabeled graph isomorphism, these 20 cases yield 7 different Caley graphs.
	We begin the analysis of the cone types with the graph $3^6$, shown in Figure \ref{fig:2.5.1}. For brevity, we will write "ext.cone" instead of "extended cone".
	
	Our first step is to find the cone and extended cone of a vertex $v$ of distance 1 from the identity. However, we first consider the following sequence of lines on $\Gamma$, $\{... l_{-2}, l_{-1}, l_{0}, l_{1}, l_{2} ... \}$ shown in Figure \ref{fig:4.1.1}. 
	Let $1, u \in l_0$, and consider a path from $1$ to $u$.
	\\
	Look at the largest $N$ such that the path crosses $l_{-N} \cup l_N$. 
	When $N>0$ we can replace this path with another one which does not cross $l_{-N} \cup l_N$ and has a shorter length than the original. 
	Thus any path from $1$ to $u$ which does not lie entirely on $l_0$ is not a geodesic.
	From this we can immediately conclude that $l_0 \subset \Gamma\setminus C(v)$. 
	\\
	Similarly, any path from $1$ to $w \in l_1$ which does not lie entirely on $l_0 \cup l_1$ is not a geodesic. 
	Using these two observations, we may compute the norms of vertices on $l_0 \cup l_1$ using induction to conclude that the rays $r_1$ and $r_2$ in Figure \ref{fig:4.1.1} are geodesic rays and that $r_2 \subseteq C(v)$.
	\begin{figure}[h]
		\begin{center}
			\includegraphics[scale=0.14]{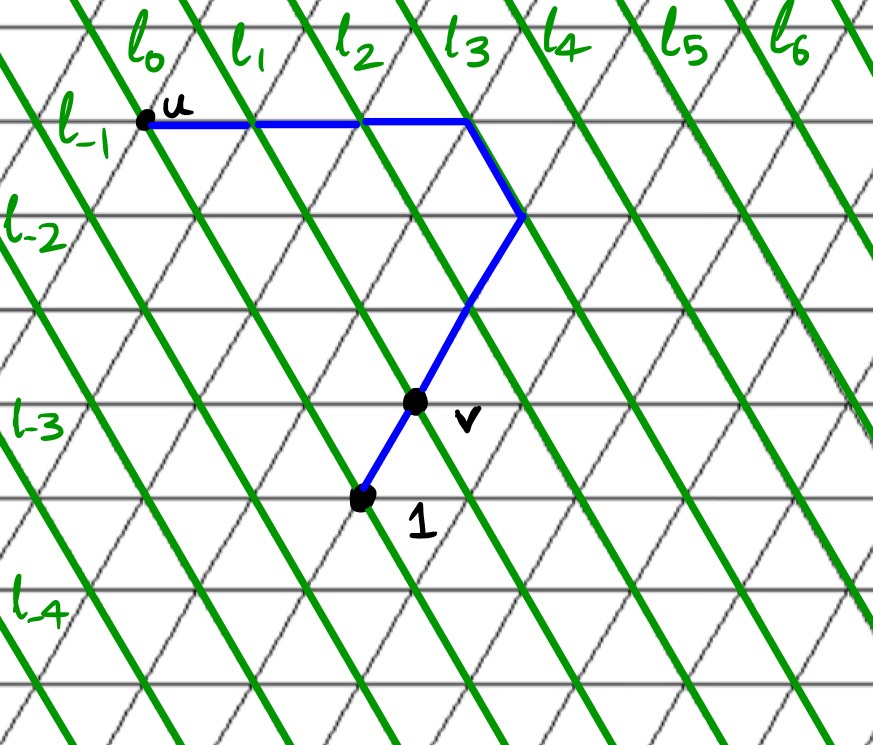}
			\includegraphics[scale=0.14]{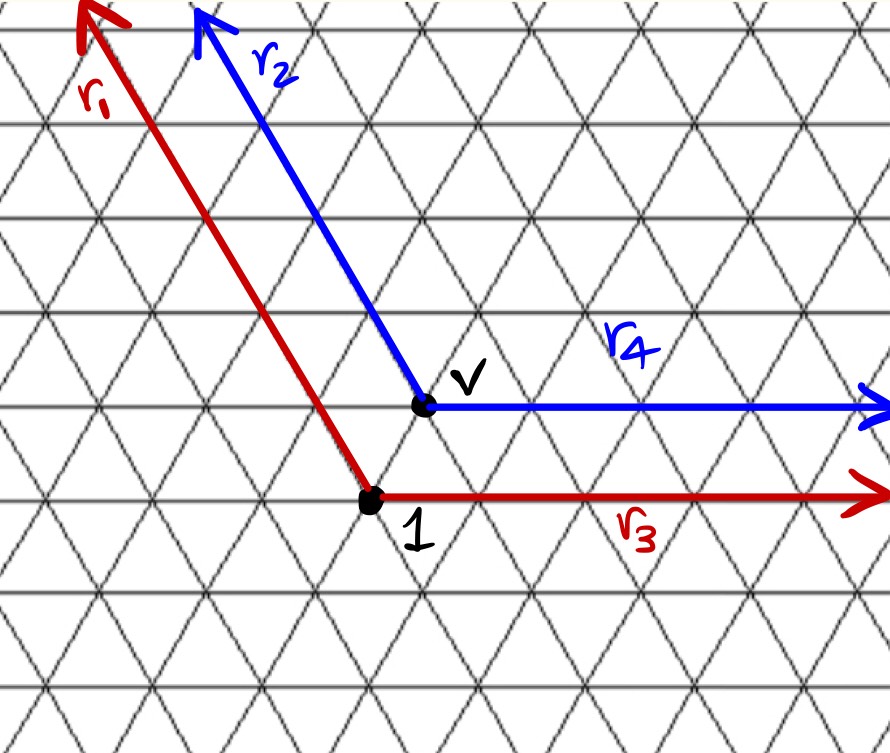}
			\includegraphics[scale=0.15]{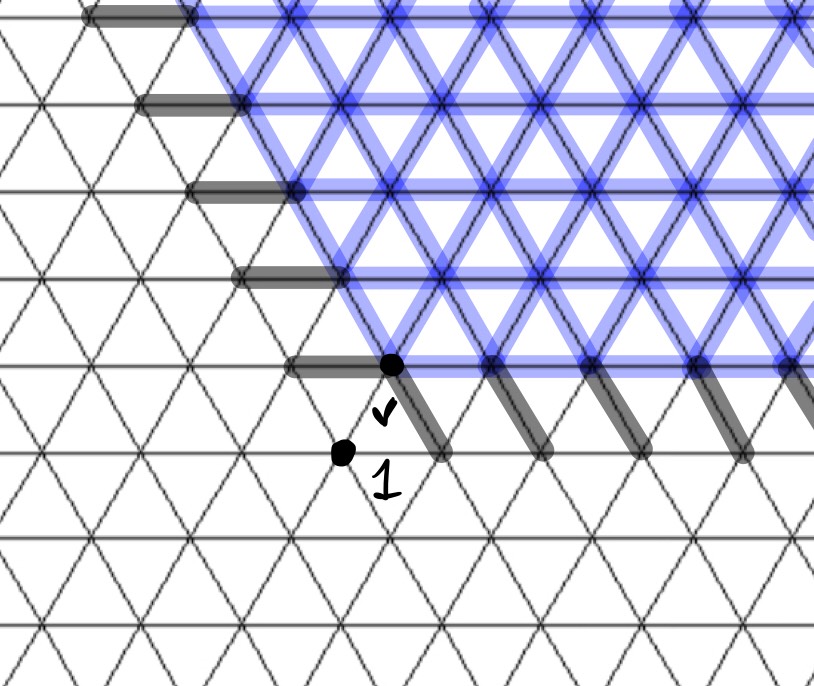}
			\caption{Finding the ext.cone of $v$ with $|v|$}
			\label{fig:4.1.1}
		\end{center}
	\end{figure}
 	Next consider the rays $r_3, r_4$, as shown in Figure \ref{fig:4.1.1}. 
 	From analogous arguments it follows that $r_3 \subset \Gamma \setminus C(v)$ and $r_4\subset C(v)$. 
	Therefore, letting $S_{in} := r_2 \cup r_4$ and $S_{out} := r_1 \cup r_3$, we can apply Proposition 3.10 and get C(v) shown in blue in Figure \ref{fig:4.1.1}.
	\\
	Next, to get the extended cone $\bar{C}(v)$, we need to add new edges and vertices through tangent edges.
	The vertices of $C(v)$ which are connected to vertices outside $C(v)$ are precisely $r_2 \cup r_4$, so we only need to look at these vertices.
	Since we computed the lengths of all the vertices  in $r_1 \cup r_2 \cup r_3 \cup r_4$ we can add any new vertices and tangent edges for our extended cone. 
	These additions are shown in Figure \ref{fig:4.1.1} in black.
	
	Similarly, we find all the 6 extended cones of the \textbf{1-sphere}, the vertices of distance 1 from the root vertex, shown in Figure \ref{fig:4.2.1}.
	\begin{figure}[h]
		\begin{center}
			\includegraphics[scale=0.12]{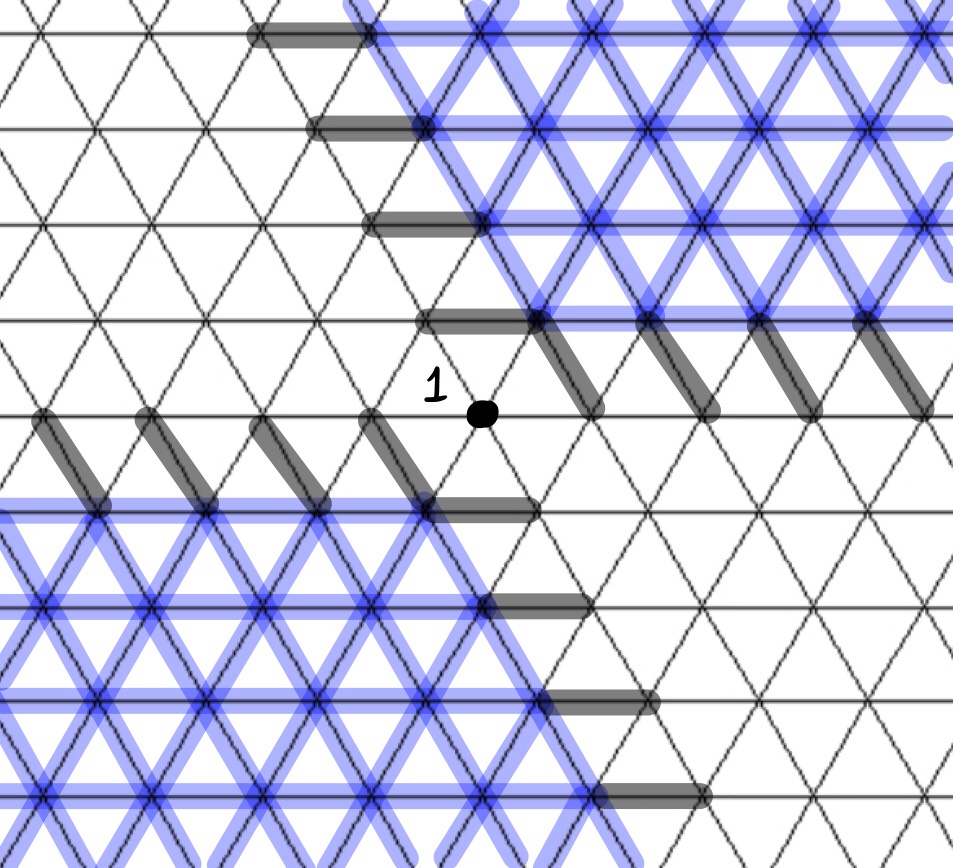}
			\includegraphics[scale=0.12]{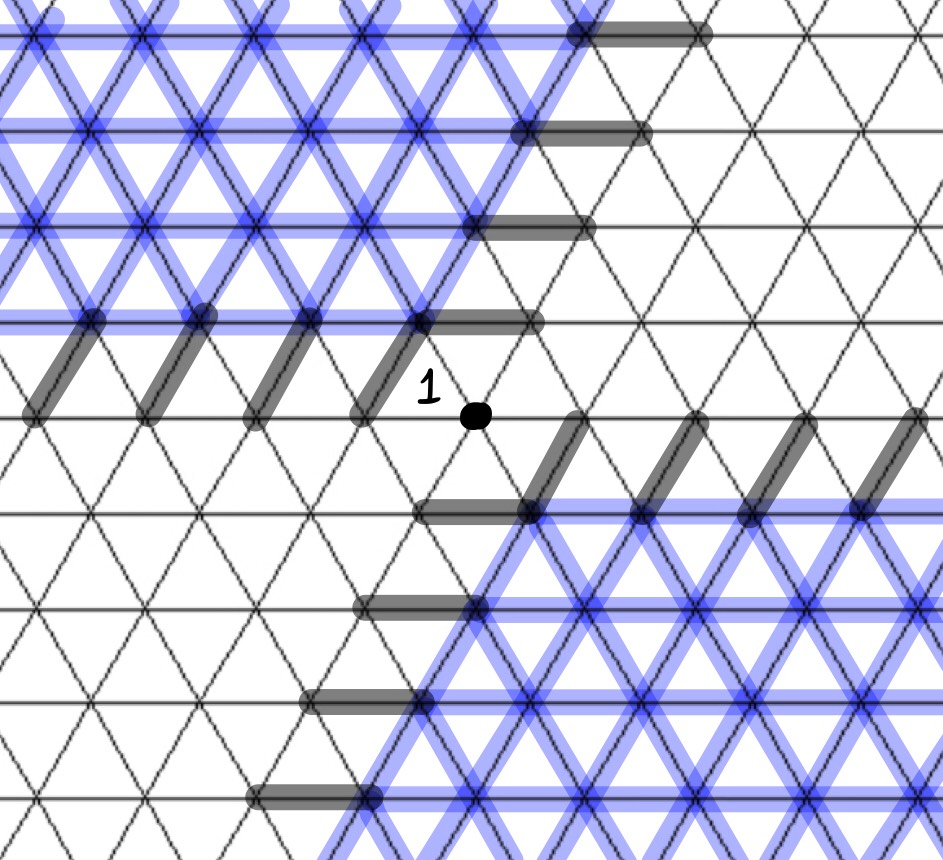}
			\includegraphics[scale=0.11]{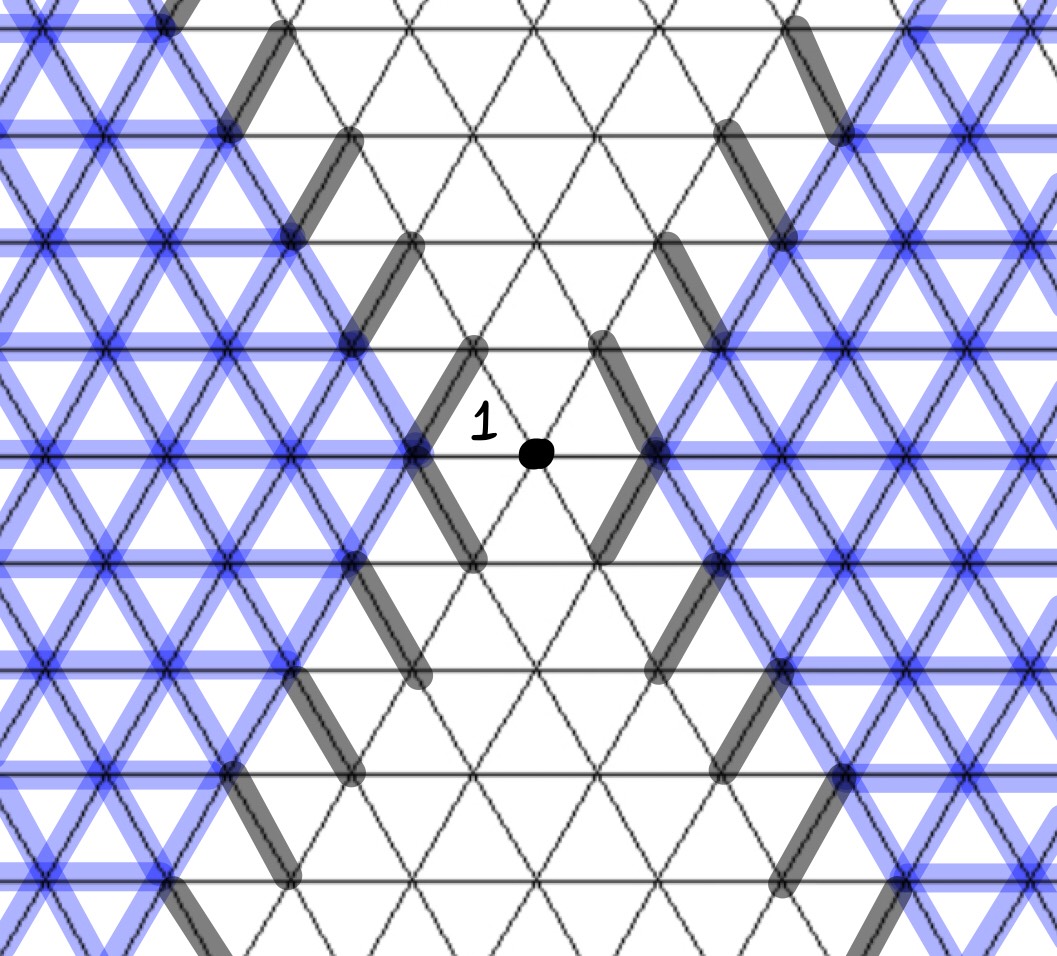}
			\caption{Finding the ext.cones of all vertices in the 1-sphere}
			\label{fig:4.1.2}
		\end{center}
	\end{figure}	 
	It is easy to see that the above extended cones are equivalent. We name their ext.cone type, $A$. Looking at the pictures in Figure \ref{fig:4.1.2}, we see that $m_A = 1$. Pick a representative cone with root vertex $v_1$ as shown in Figure \ref{fig:4.1.3}. 
	We wish to find all the ext.cones of the vertices in $A^+_{v_1}$. Doing so, we will get the numbers $(n_{A,i})_i$ needed for Cannon's equations.
	We start with one such vertex $v_2$. We apply Proposition 3.8 and see that $C(v_2)$ consists of all the vertices in the intersection shown in Figure \ref{fig:4.1.3}.
	
	Next,we compute this intersection in detail. Notice that the following 2 red rays belong to this intersection while the 2 black ones do not (see Figure \ref{fig:4.1.3})
	\begin{figure}[h]
		\begin{center}
			\includegraphics[scale=0.18]{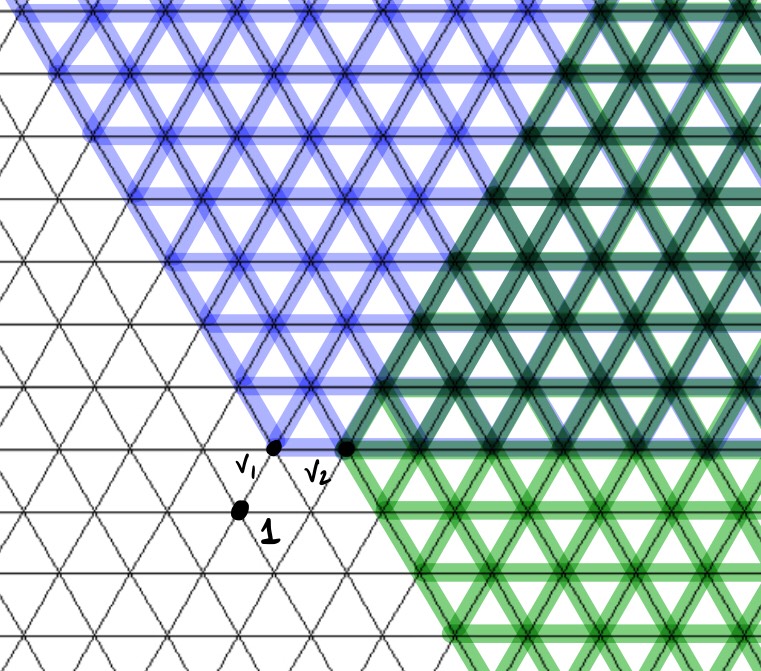}
			\includegraphics[scale=0.18]{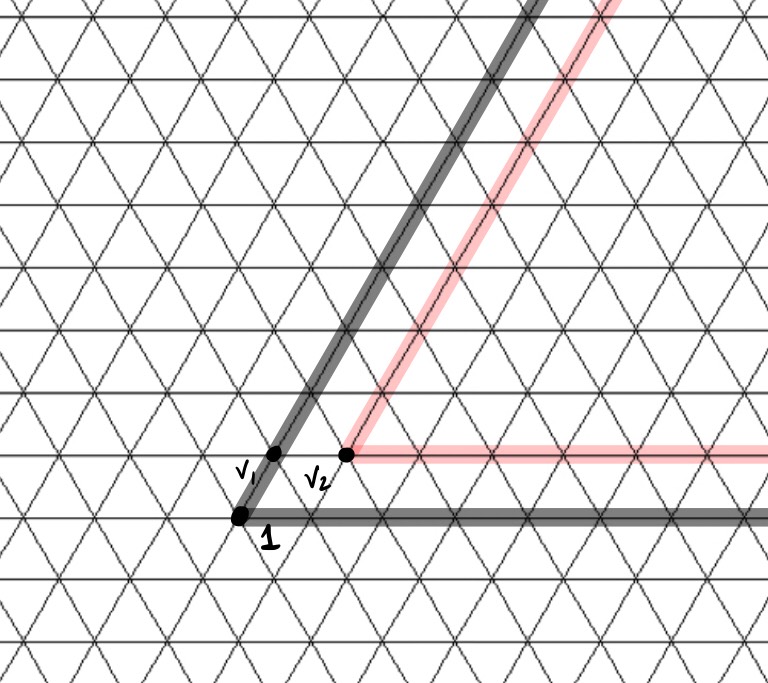}
			\caption{Applying Proposition 3.8}
			\label{fig:4.1.3}
		\end{center}
	\end{figure}		
	Using Proposition 3.10, we can find the cone $C(v_2)$ shown in Figure \ref{fig:4.1.4}. To find $\bar{C}(v_2)$, we use Proposition 3.9. Shown in Figure \ref{fig:4.1.4} are the additional edges (in black) that need to be added to $C(v_2)$ to obtain $\bar{C}(v_2)$. The edges at the bottom side of the cone are added according to case 1 of Proposition 3.9, whereas the ones on the top side of the cone according to case 2.
	\begin{figure}[h]
		\begin{center}
			\includegraphics[scale=0.18]{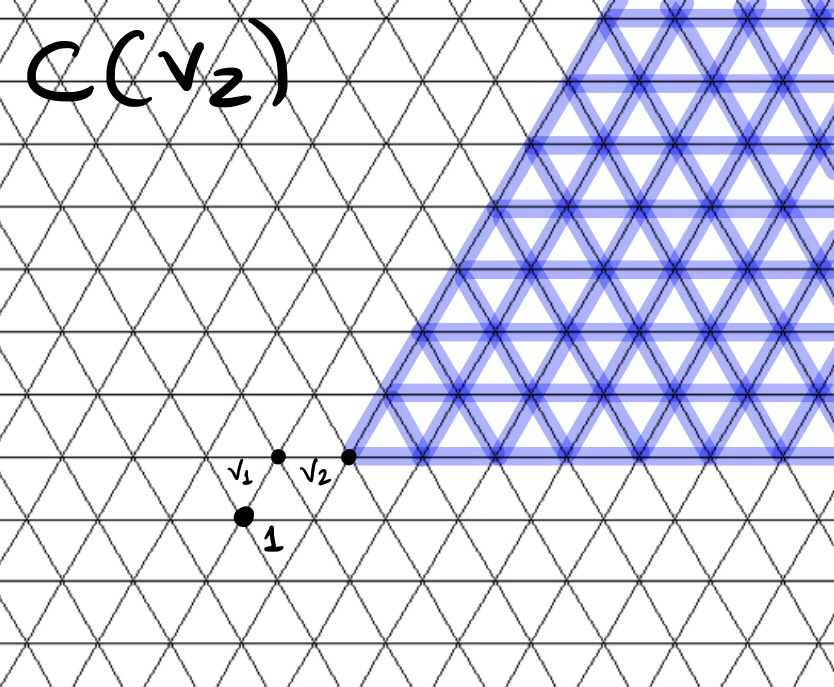}
			\includegraphics[scale=0.18]{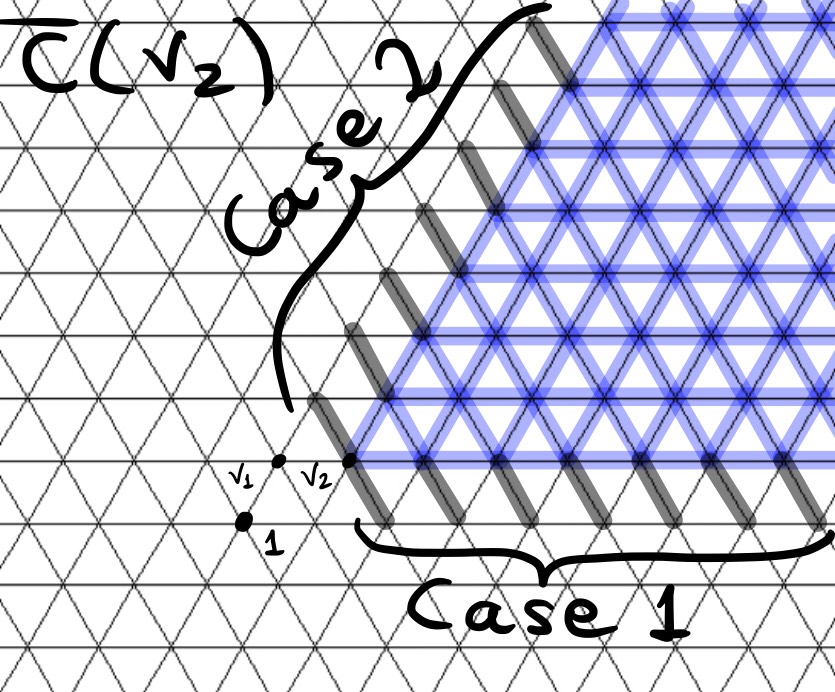}
			\caption{Applying Proposition 3.9}
			\label{fig:4.1.4}
		\end{center}
	\end{figure}	 
	
	Applying the same argument to the other points of $A^+_{v_1}$, we get two more ext.cones (See Figure \ref{fig:4.1.5}).
	\begin{figure}[h]
		\begin{center}
			\includegraphics[scale=0.085]{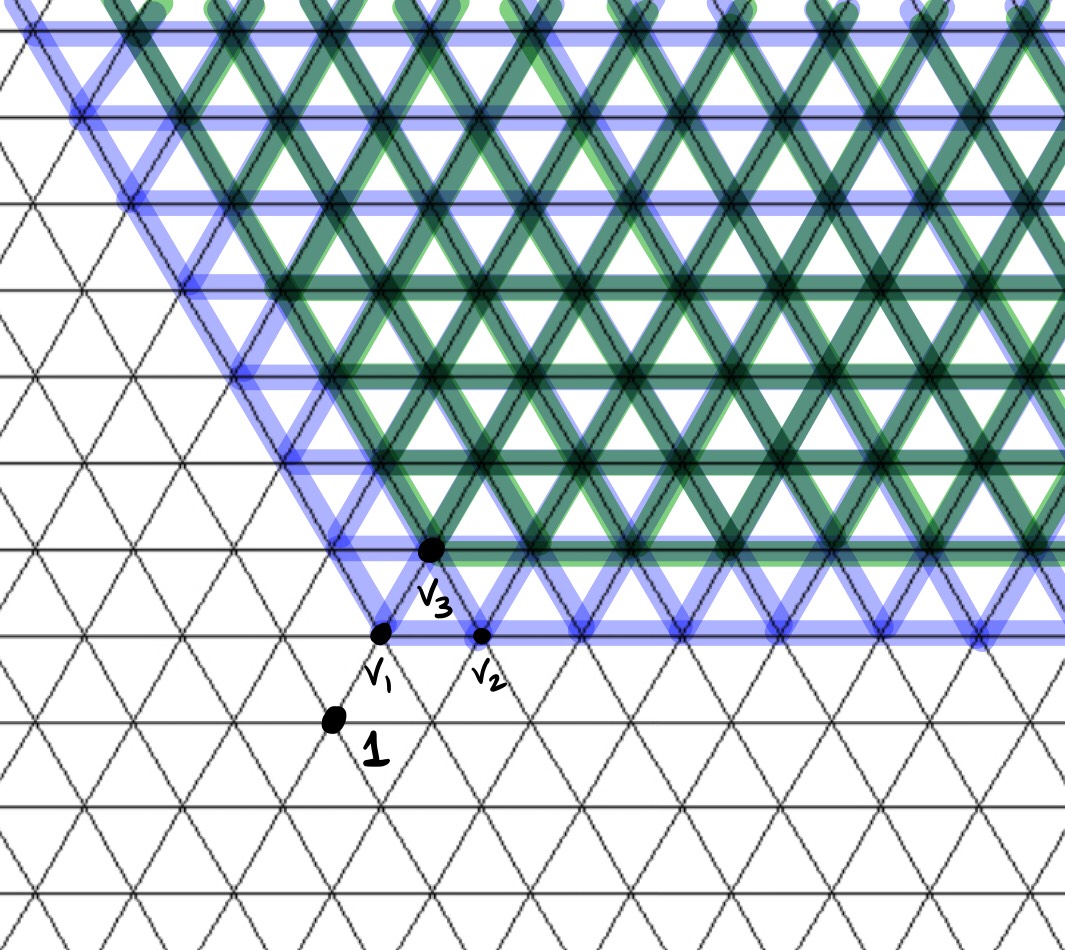}
			\includegraphics[scale=0.085]{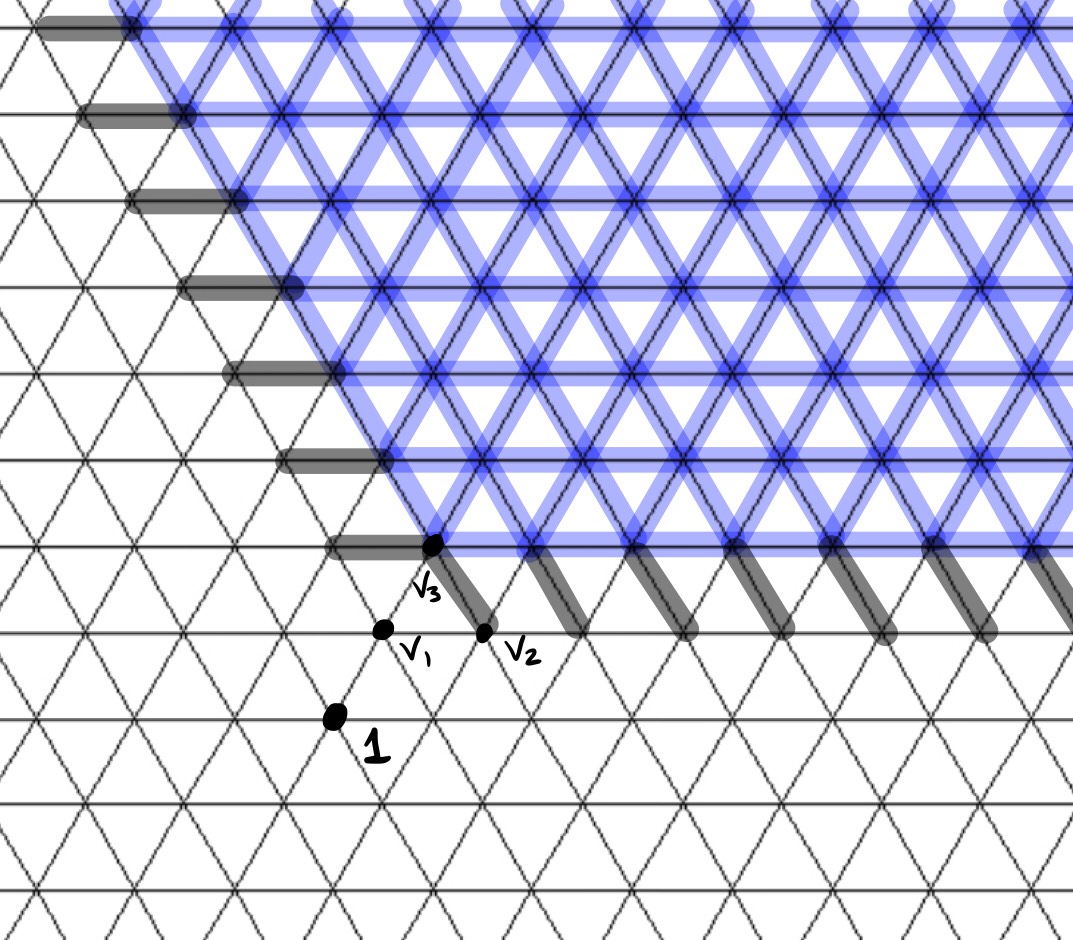}
			\includegraphics[scale=0.12]{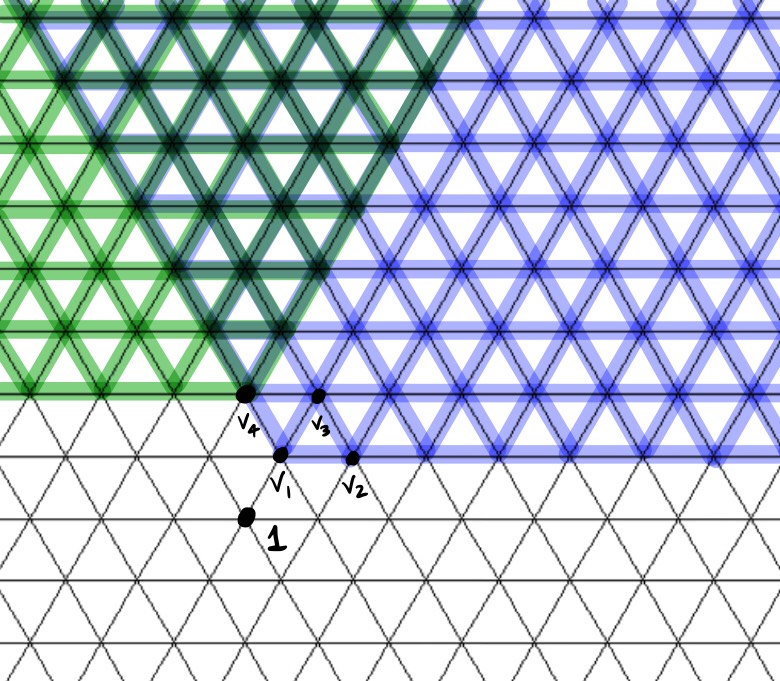}
			\includegraphics[scale=0.085]{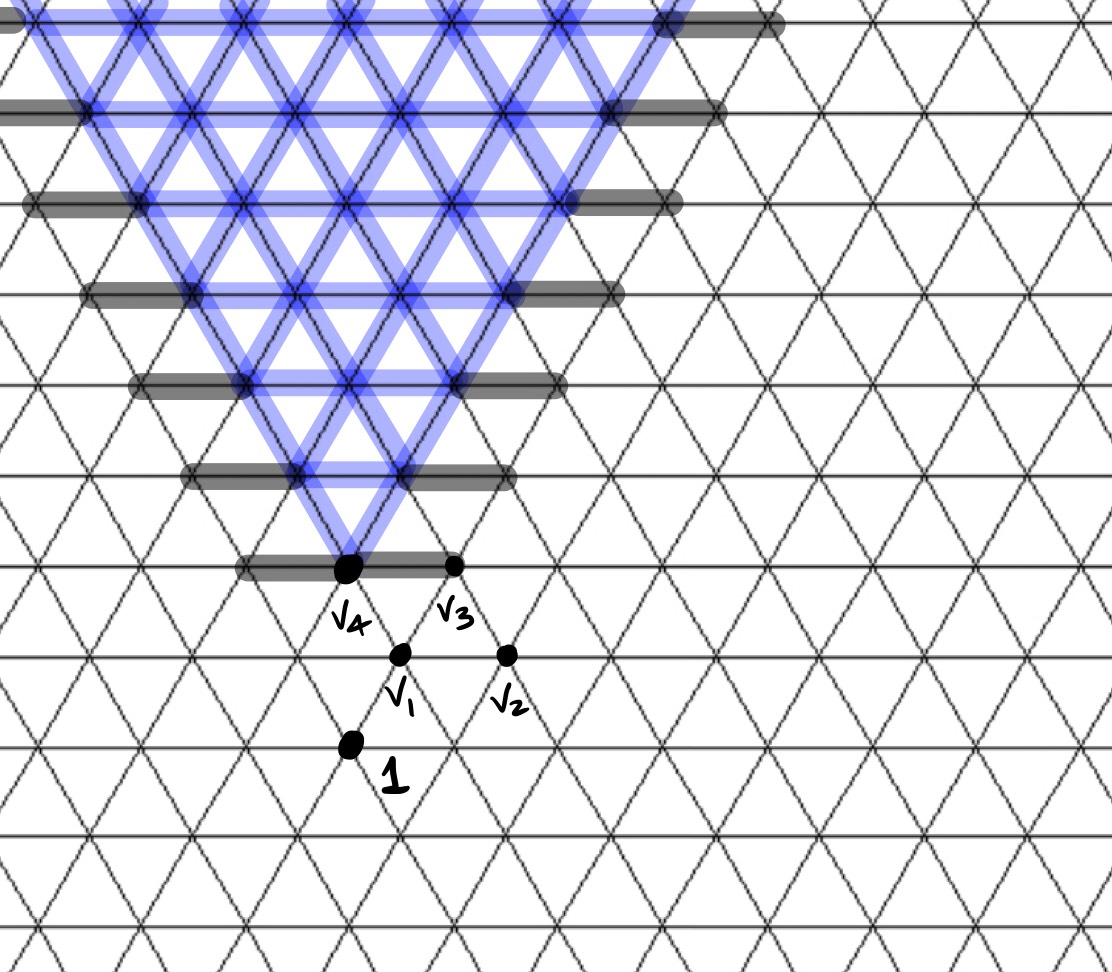}
			\caption{Finding the rest of the ext.cones of all vertices in $A^+_{v_1}$}
			\label{fig:4.1.5}			
		\end{center}
	\end{figure}		
	Observe that $\bar{C}(v_1) \sim \bar{C}(v_3)$ and $\bar{C}(v_2) \sim \bar{C}(v_4)$. Name the ext.cone type of the latter two ext.cones $B$. We then consider $\bar{C}(v_4)$ and find the ext.cones of the vertices in $A^+_{v_4}$, using the exact same approach as above (See Figure Figure \ref{fig:4.1.6}). 
	\begin{figure}[h]
		\begin{center}
			\includegraphics[scale=0.125]{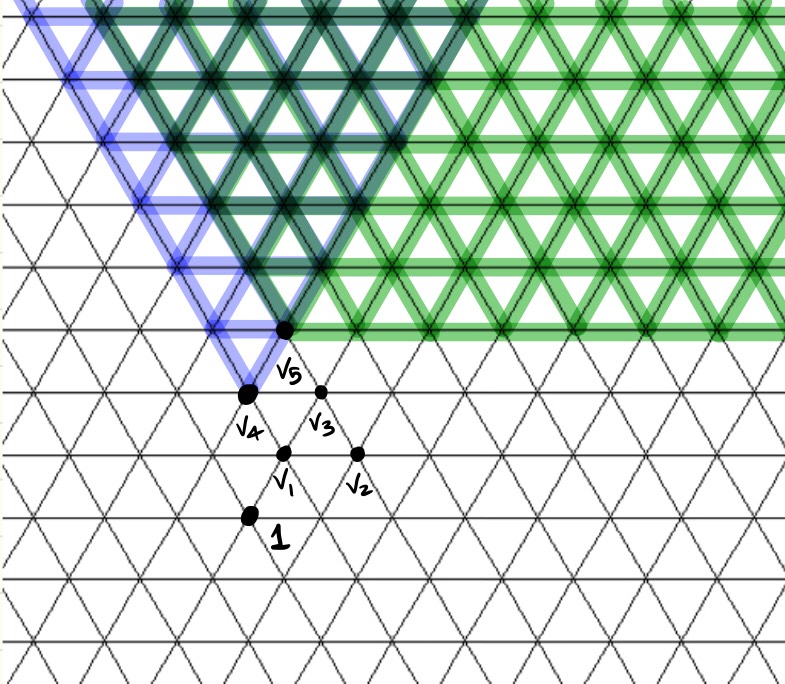}
			\includegraphics[scale=0.125]{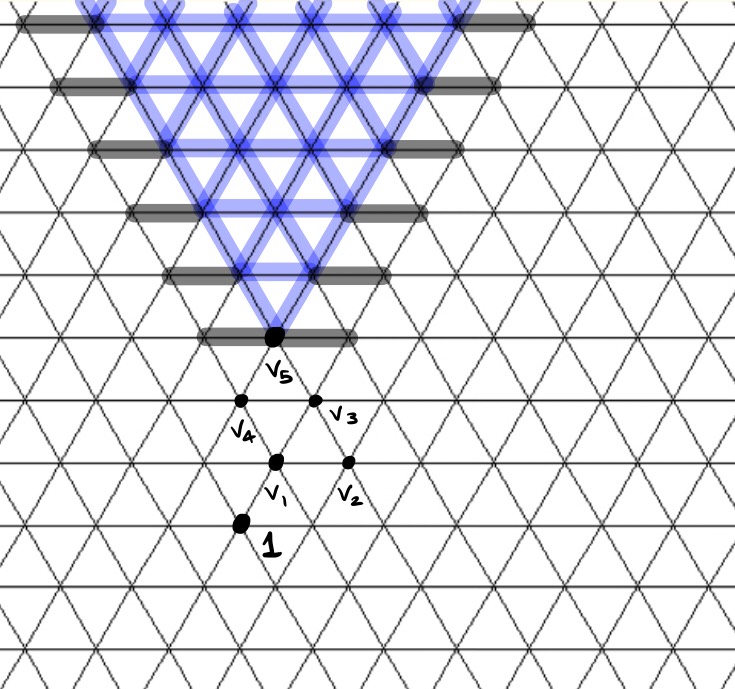}
			\includegraphics[scale=0.125]{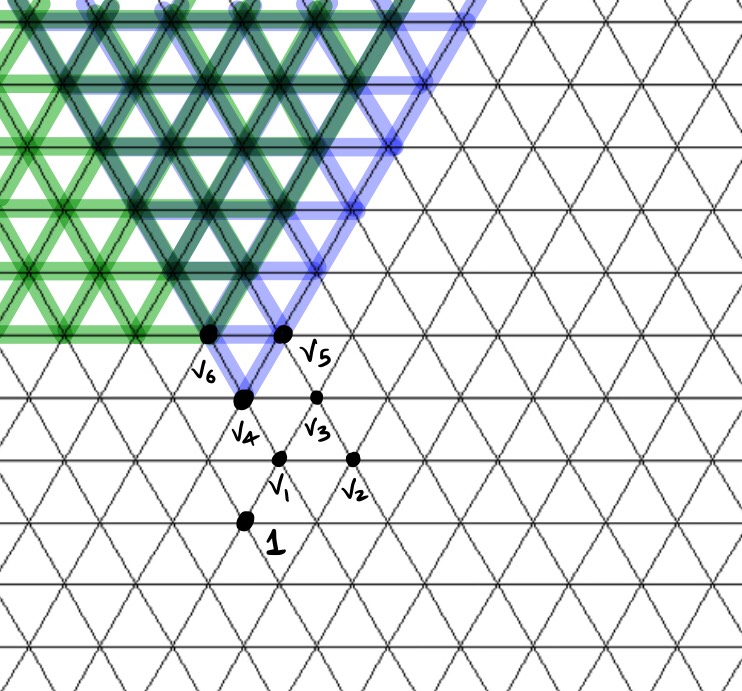}
			\includegraphics[scale=0.125]{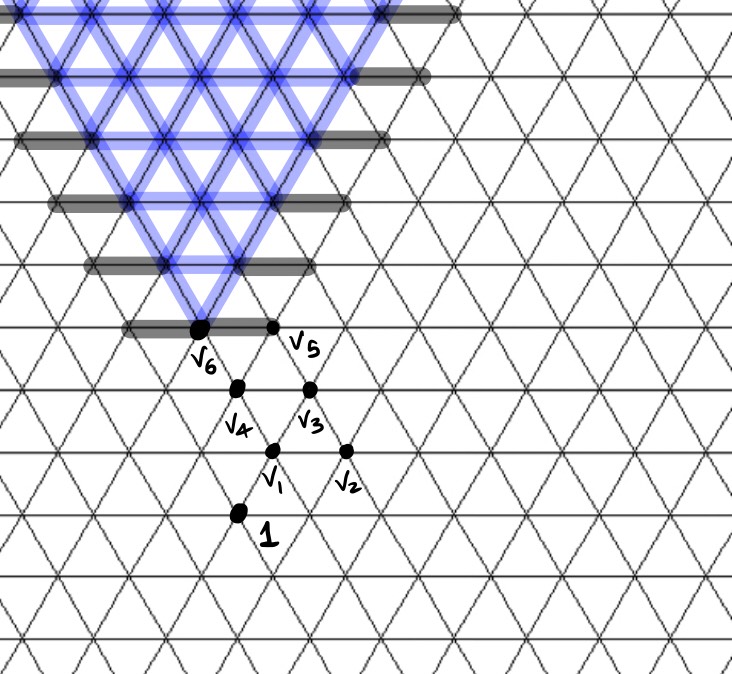}
			\caption{Finding the ext.cones of all vertices in $A^+_{v_4}$}
			\label{fig:4.1.6}
		\end{center}
	\end{figure}
	Observe that $\bar{C}(v_4) \sim \bar{C}(v_5) \sim \bar{C}(v_6)$, so all these points have cone type $B$.
	Since no new cone types emerged, we are done! 
	
	We have found all the extended cone types of $3^6$ and are now ready to get Cannon's equations. 
	Lets call the ext.cone type at the identity $e$.
	The m-values are easy to find: just take an ext.cone of a given type and look at how many edges starting at the root vertex are not tangent nor lead to an element in the cone; this will be the m-value. 
	For the n-values:
	The cone type $e$ appears only at the identity and leads to 6 ext.cones of type $A$. 
	Then, ext.cones of type $A$ always lead to 1 ext.cone of type $A$ and 2 ext.cones of type $B$. Finally, ext.cones of type $B$ leads to 2 ext.cones of type $B$. 
	We obtain the numbers $(m_j)_j$ and $(n_{i,j})_{i,j}$ that were defined at the beginning of section 3.6:
	$$ 
	\begin{matrix}
	m_e = 0	\\
	m_A = 1	\\
	m_B = 2
	\end{matrix}
	\;\;\;\; and \;\;\;\;
	\begin{matrix}
	n_{i,j}	& e	& A	& B	\\
	e		& 0	& 0	& 0	\\
	A		& 6	& 1	& 0	\\
	B		& 0	& 2	& 2	\\
	\end{matrix}
	\;\;.$$
	The above numbers can be summarized in an extended cone type diagram (see Figure \ref{fig:4.1.7}), which was defined at the end of section 3.6.
	\begin{figure}[h]
		\begin{center}
			\includegraphics[scale=0.2]{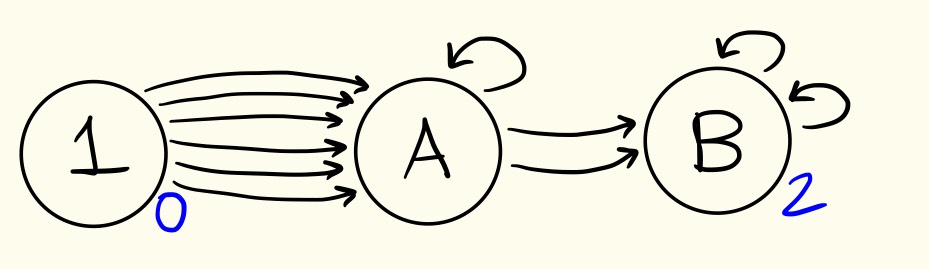}
			\caption{The ext.cone type diagram of $3^6$}
			\label{fig:4.1.7}
		\end{center}
	\end{figure}			
	Using the above numbers, we set up and solve Cannon's Equations using Theorem 1:
	$$ 
	\begin{pmatrix}
	1	& 0	 & 	0	\\
	-6z	& 1-z& z-1	\\
	0	& -2z& 2-2z
	\end{pmatrix}
	\cdot
	\begin{pmatrix}
	f_e\\
	f_A\\
	f_B
	\end{pmatrix}
	=
	\begin{pmatrix}
	1\\
	0\\
	0
	\end{pmatrix}
	.$$
	and obtain the spherical growth series:
	$$\Delta(z) = f_e(z) + f_A(z) + f_B(z) = \dfrac{z^2+4z+1}{(1-z)^2}.$$

	\subsection{The Square Tiling $4^4$}
	
	This case is even simpler than the $3^3$ graph. Since $4^4$ is bipartite, there are no tangent edges, thus cones and extended cone are the same, making our calculations easier (see remark at the beginning of section 3.4).
	\\
	Similar to the $3^6$ graph, we can easily find the cone of a vertex $v_1$ on the 1-sphere at the identity using geodesic rays $r_1$ and $r_2$ and applying the same arguments (see Figure \ref{fig:4.2.1}).
	\begin{figure}[h]
		\begin{center}
			\includegraphics[scale=0.18]{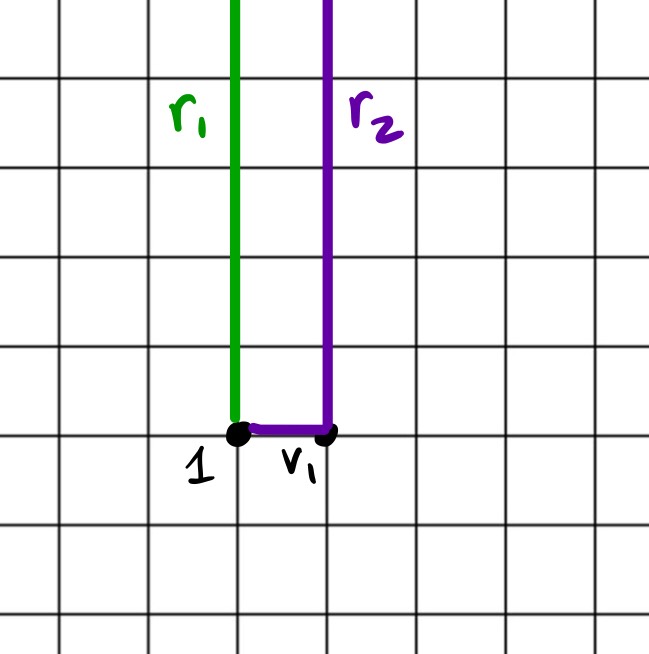}
			\includegraphics[scale=0.18]{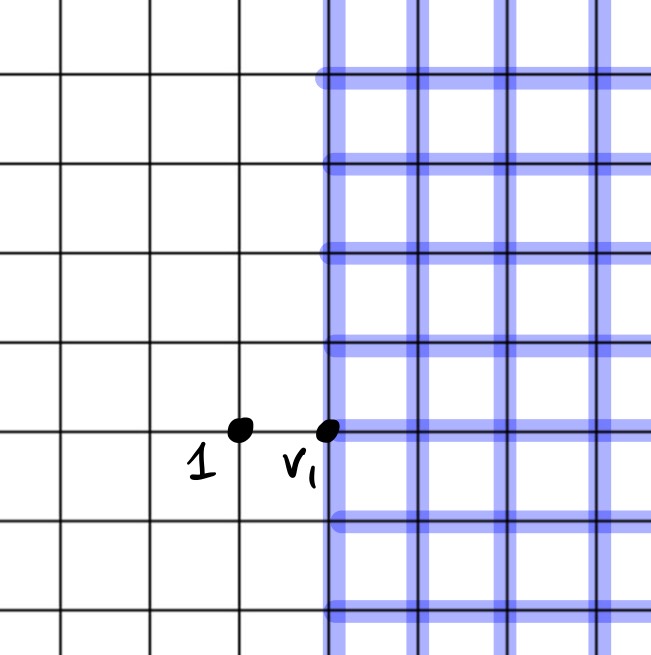}
			\caption{Getting an ext.cone at the 1-sphere}
			\label{fig:4.2.1}
		\end{center}
	\end{figure}
	Using Proposition 3.8, through translating and intersecting the cones at the 1-sphere, we can find the cone at the vertex $v_2 \in A^+_{v_1}$ (see Figure \ref{fig:4.2.2})
	\begin{figure}[h]
		\begin{center}
			\includegraphics[scale=0.18]{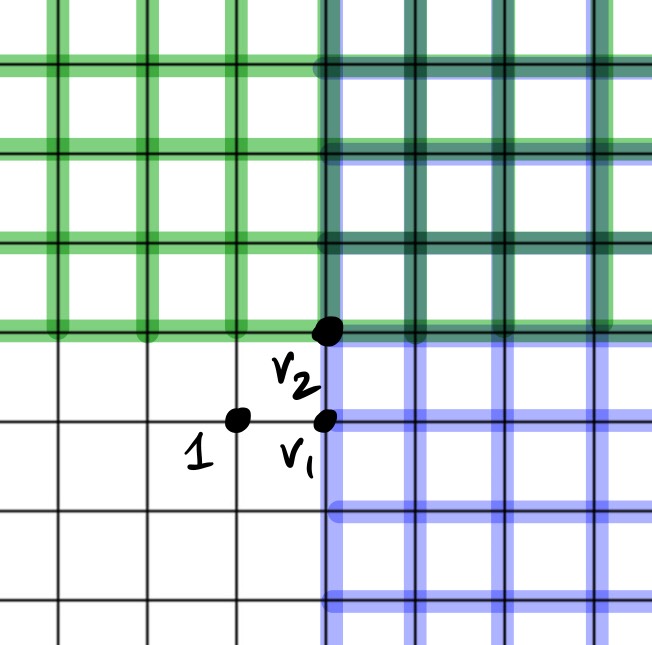}
			\includegraphics[scale=0.18]{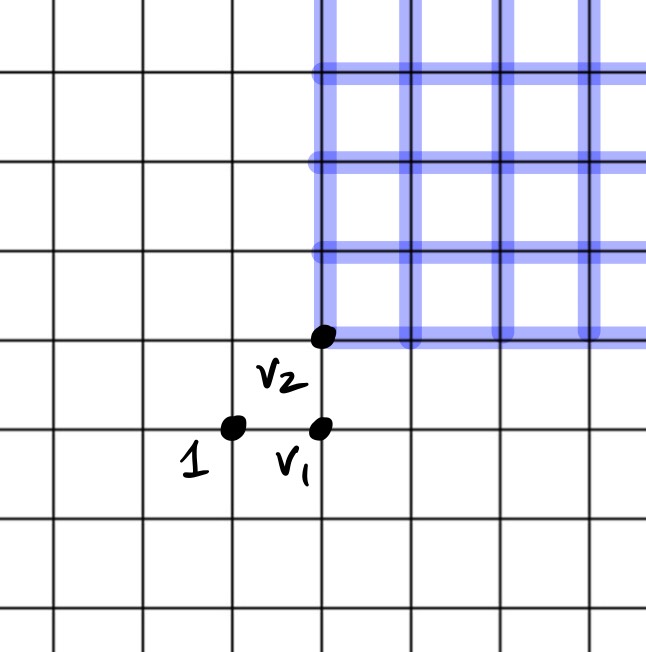}
			\includegraphics[scale=0.15]{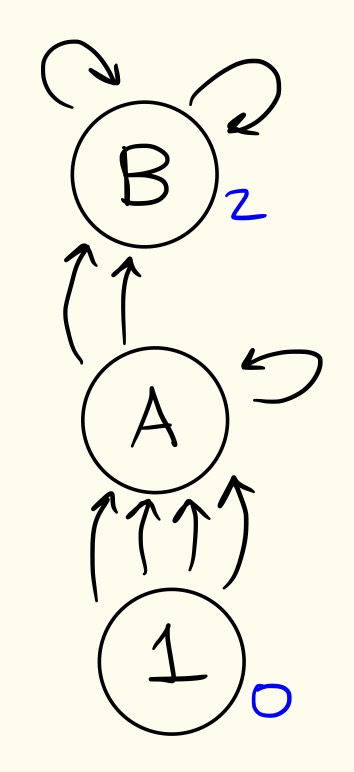}
			\caption{Getting an ext.cone in $A^+_{v_2}$ and the ext.cone type diagram of $4^4$}
			\label{fig:4.2.2}
		\end{center}
	\end{figure}	 
	\\
	Call the ext.cone type that $\bar{C}(v_1)$ belongs to $A$ and the ext.cone type that $\bar{C}(v_2)$ belongs to $B$. If we make all the needed calculations as in the $3^6$ graph, it turns out that ext.cone types $A$, $B$ and the ext.cone at the identity $1$ are all the ext.cone types of $4^4$ and the corresponding Cannon's numbers can be summarized in the extended cone types diagram found in Figure \ref{fig:4.2.2}.
	Using Cannon's system of equations we end up with:
	$$ \Delta(z) = \dfrac{(z+1)^2}{(z-1)^2}.$$
	
	\subsection{The Hexagonal Tiling $6^3$}
	
	Notice that $6^3$ is also bipartite, making cones and ext.cones the same. Likewise to the $3^6$ graph, we consider lines ...$l_{-3}$, $l_{-2}$, $l_{-1}$, $l_{0}$, $l_{1}$, $l_{2}$, $l_{3}$... as shown in Figure \ref{fig:4.3.1}. 
	For any two vertices $v, w \in l_0 \cup l_1$, we can replace any path from $v$ to $w$ that crosses vertices outside $l_0 \cup l_1$ with a shorter path that lies on $l_0 \cup l_1$. 
	This observation allows us to inductively compute the norms of all the vertices of $l_0 \cup l_1$ and show that the rays $r_1$, $r_2$, $r_3$ and $r_4$ in Figure \ref{fig:4.3.1} are all geodesic rays, so $r_3 \cup r_4 \subset C(v_1)$.
	Similarly any path from $1$ to $w \in l_0$ which does not lie on $l_0$ is not a geodesic, and therefore $r_1 \cup r_2 \subset \Gamma \backslash C(v_1)$.
	\begin{figure}[h]
		\begin{center}
			\includegraphics[scale=0.3]{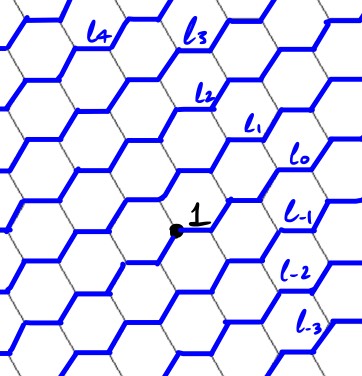}
			\includegraphics[scale=0.27]{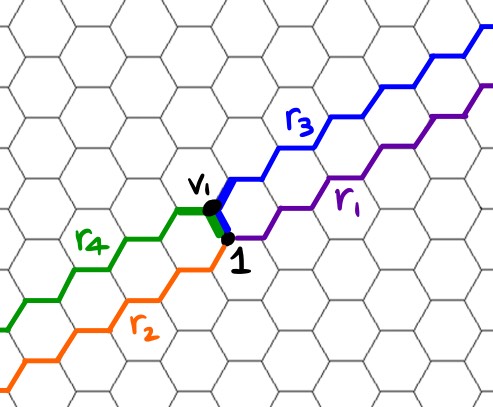}
			\caption{Geodesic rays in $6^3$}
			\label{fig:4.3.1}
		\end{center}
	\end{figure}	 
	\\
	Applying Proposition 3.10, we can compute the cone of $v_1$, an element on the 1-sphere. Through translating and intersecting cones via Proposition 3.8, we end up with 5 different ext.cone types, the ext.cone at the identity 1 and ext.cones A, B, C, D. Representatives $\bar{C}(v_1), \bar{C}(v_2), \bar{C}(v_3), \bar{C}(v_4)$ of each of the four ext. cone types $A, B, C, D$ respectively are shown in Figure \ref{fig:4.3.2}.
	
	\begin{figure}[h]
		\begin{center}
			\includegraphics[scale=0.18]{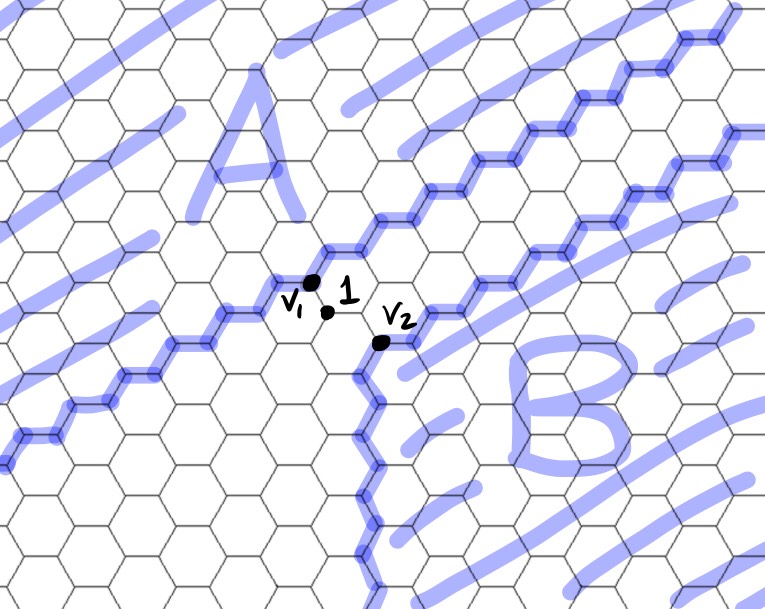}
			\includegraphics[scale=0.18]{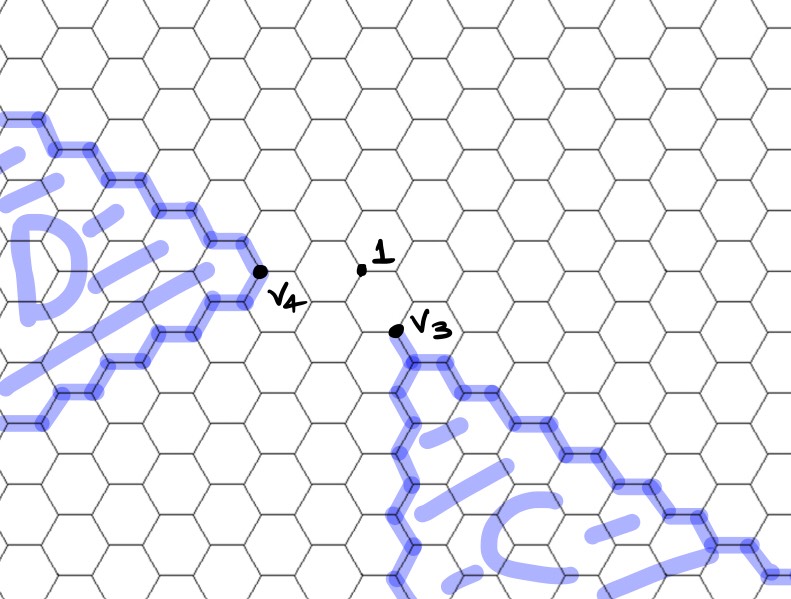}
			\includegraphics[scale=0.15]{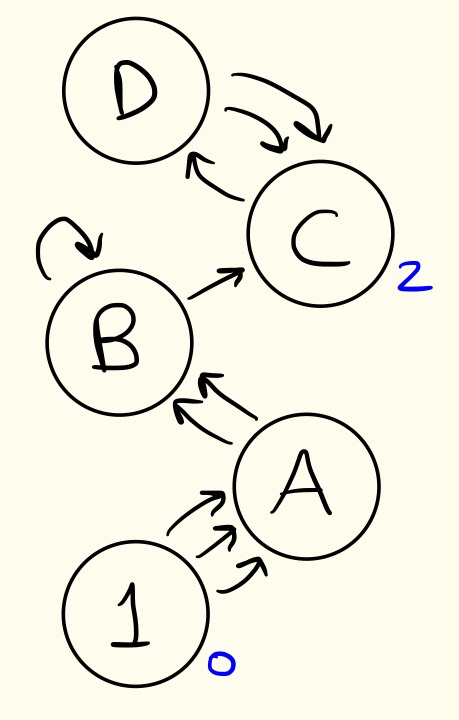}
			\caption{The ext.cone types of and the ext.cone type diagram of $6^3$}
			\label{fig:4.3.2}
		\end{center}
	\end{figure}

	The corresponding ext.cone type diagram (Figure \ref{fig:4.3.2}) is then used get the spherical growth series:
	$$ \Delta(z) = \dfrac{z^2 + z + 1}{(z-1)^2}.$$	

	\subsection{The $(3.6)^2$ Tiling}
	
	We begin, as in the previous cases, with constructing lines $(l_k)_{k\in Z}$ and $(l'_k)_{k\in Z}$ as shown in Figure \ref{fig:4.4.1}.  
	\begin{figure}[h]
		\begin{center}
			\includegraphics[scale=0.17]{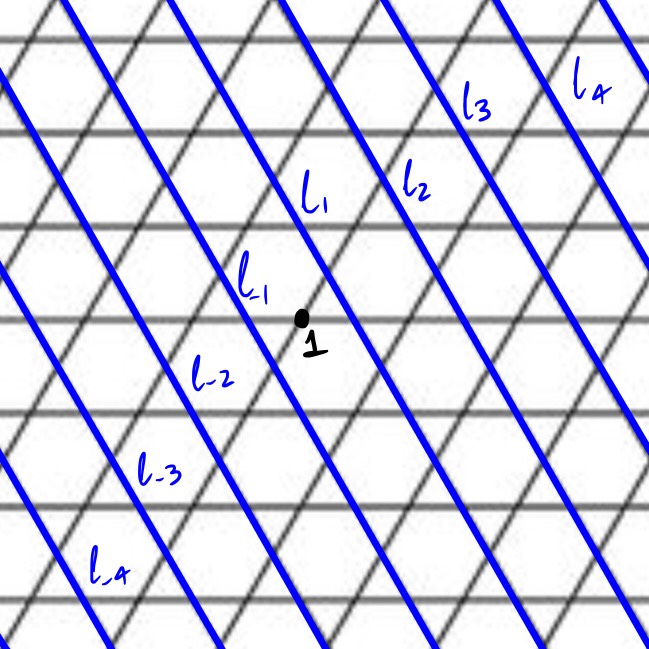}
			\includegraphics[scale=0.17]{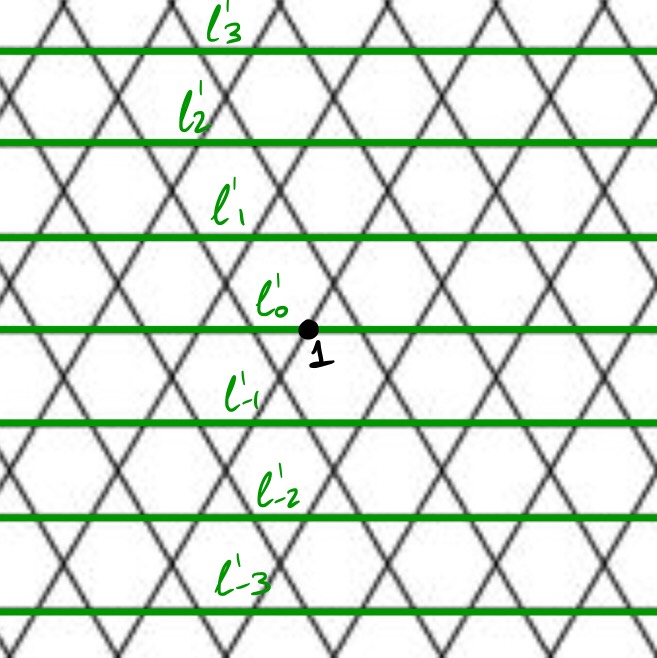}
			\includegraphics[scale=0.17]{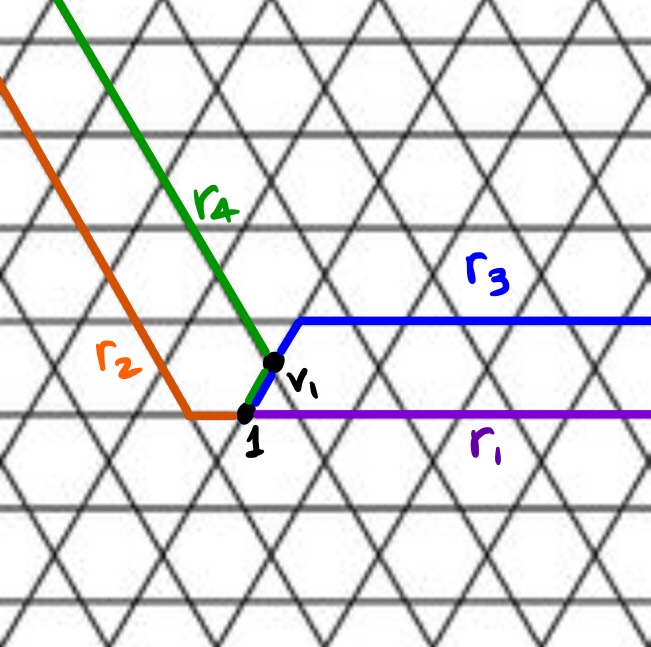}
			\caption{Geodesic rays in $(3.6)^2$}
			\label{fig:4.4.1}
		\end{center}
	\end{figure}	 
	Similarly to previous arguments, we can conclude that the $r_1$, $r_2$, $r_3$ and $r_4$ in Figure \ref{fig:4.2.2} are geodesic rays.
	Using an inductive argument, we can compute the norms of all the vertices that lie on or between $r_1$ and $r_3$ and on or between $r_2$ and $r_4$. 
	We then get (via Proposition 3.10) the extended cone of a vertex $v_1$ adjacent to $1$, whose ext.cone type we shall call $A$. Similarly we obtain all the other ext.cones at the 1-sphere at $1$ and it turns out that they are all strongly equivalent. 
	\begin{figure}[h]
		\begin{center}
			\includegraphics[scale=0.17]{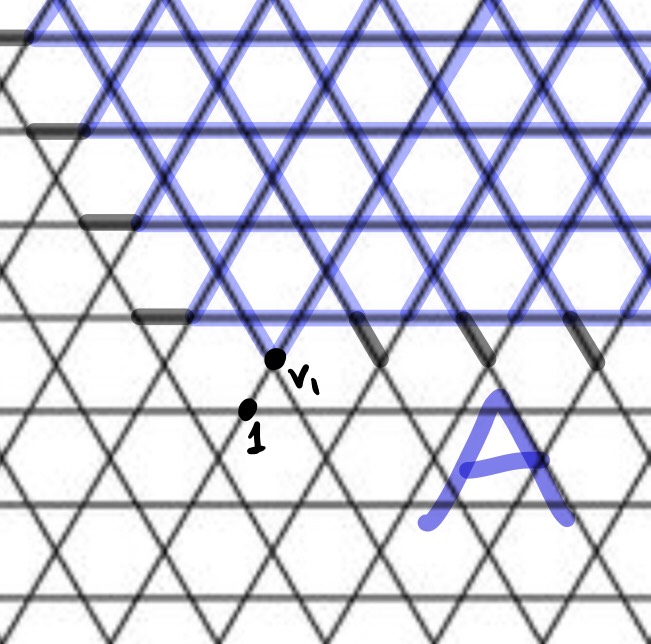}
			\includegraphics[scale=0.17]{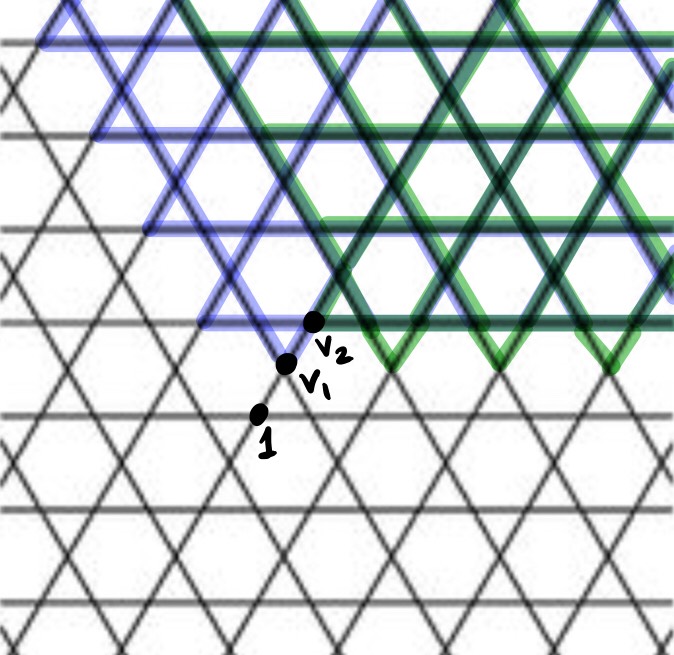}
			\includegraphics[scale=0.17]{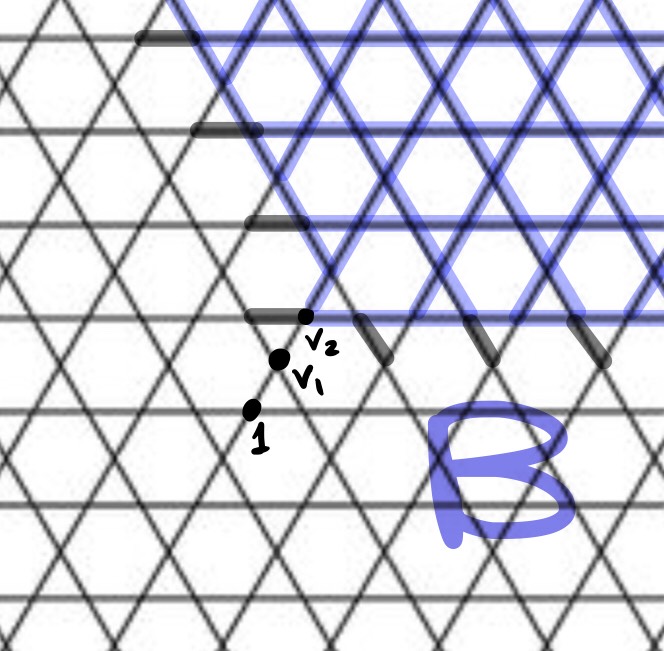}
			\caption{Obtaining ext.cone types A and B}
			\label{fig:4.4.2}
		\end{center}
	\end{figure}	 		
	Next, we apply Propositions 3.1 and 3.2 to find the extended cone type, which we will call $B$, of a vertex $v_2 \in A^+_{v_1}$ (see Figure \ref{fig:4.4.2}).
			
	Proceeding in the same manner, we get in total 8 ext.cone types. Figures \ref{fig:4.4.3} and \ref{fig:4.4.4} depict representative ext.cones of the remaining 5 ext.cones types, while Figure \ref{fig:4.4.4} also shows the extended cone type diagram of this graph. It is noteworthy to remark that an ext.cone of type $F$ is a finite graph with only 3 vertices.
	\begin{figure}[h]
		\begin{center}
			\includegraphics[scale=0.15]{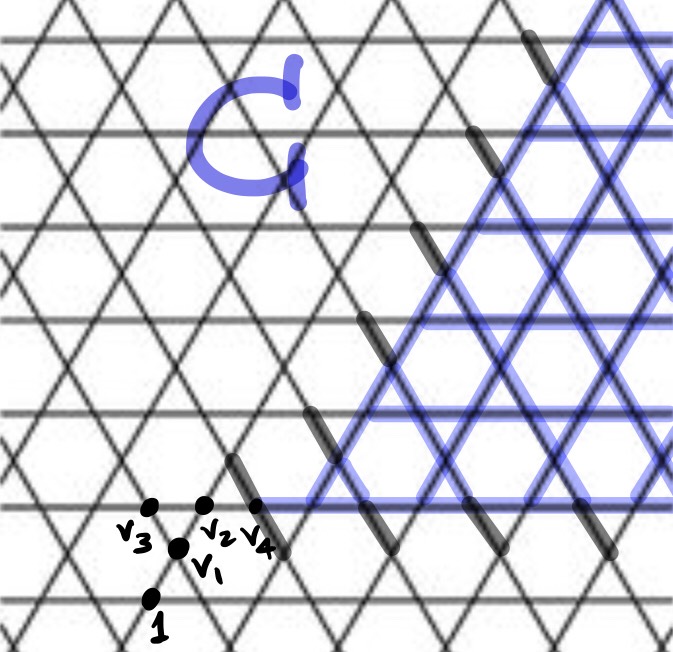}
			\includegraphics[scale=0.15]{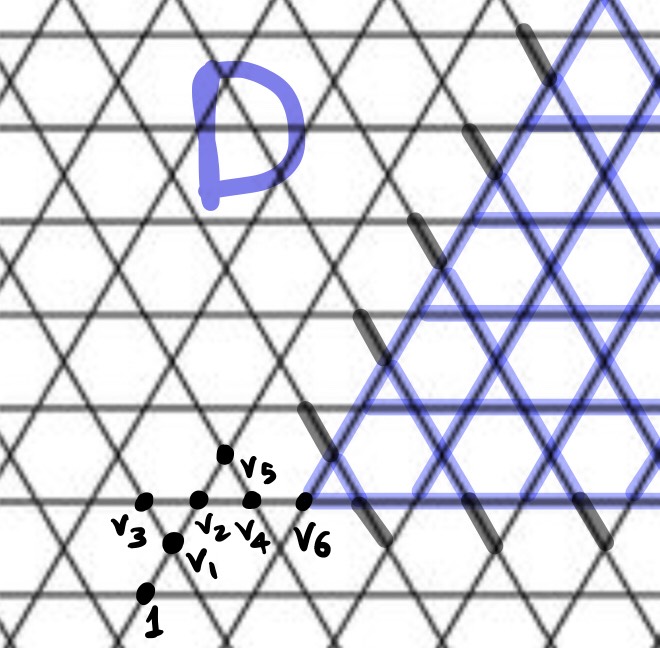}
			\includegraphics[scale=0.15]{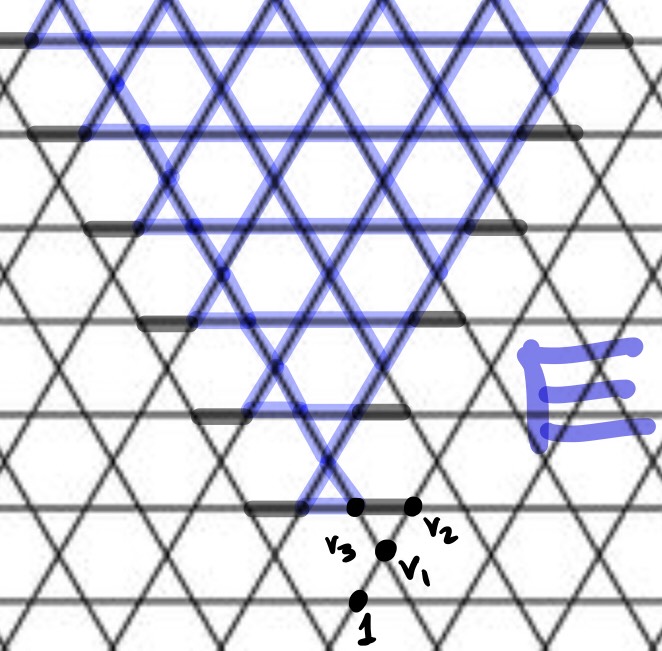}
			\caption{Ext.cone types C, D and E}
			\label{fig:4.4.3}
		\end{center}
	\end{figure}			
	\begin{figure}[h]
		\begin{center}			
			\includegraphics[scale=0.15]{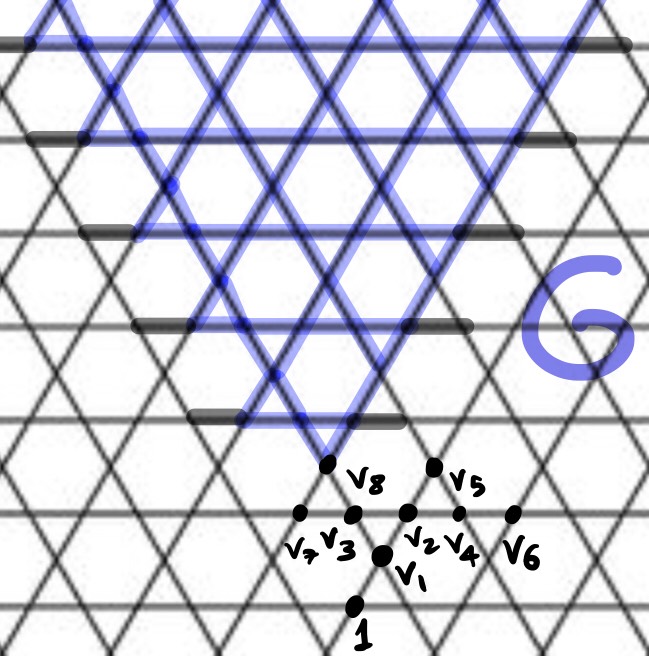}
			\includegraphics[scale=0.15]{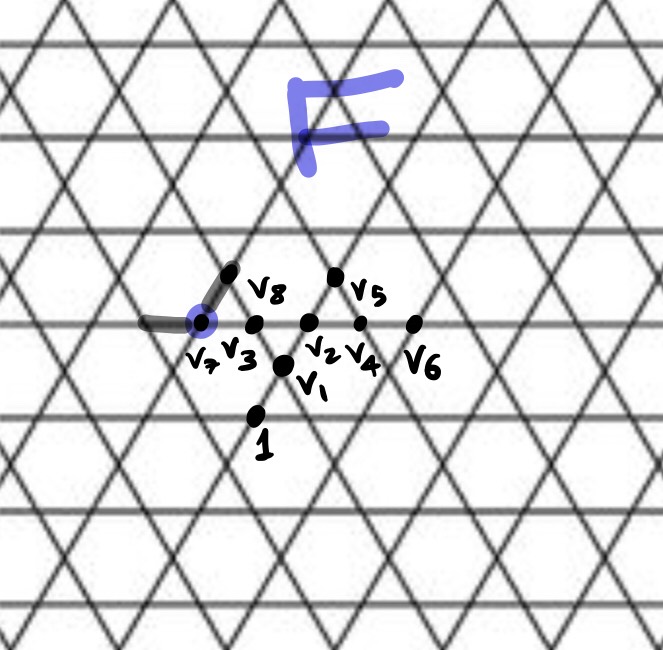}
			\includegraphics[scale=0.12]{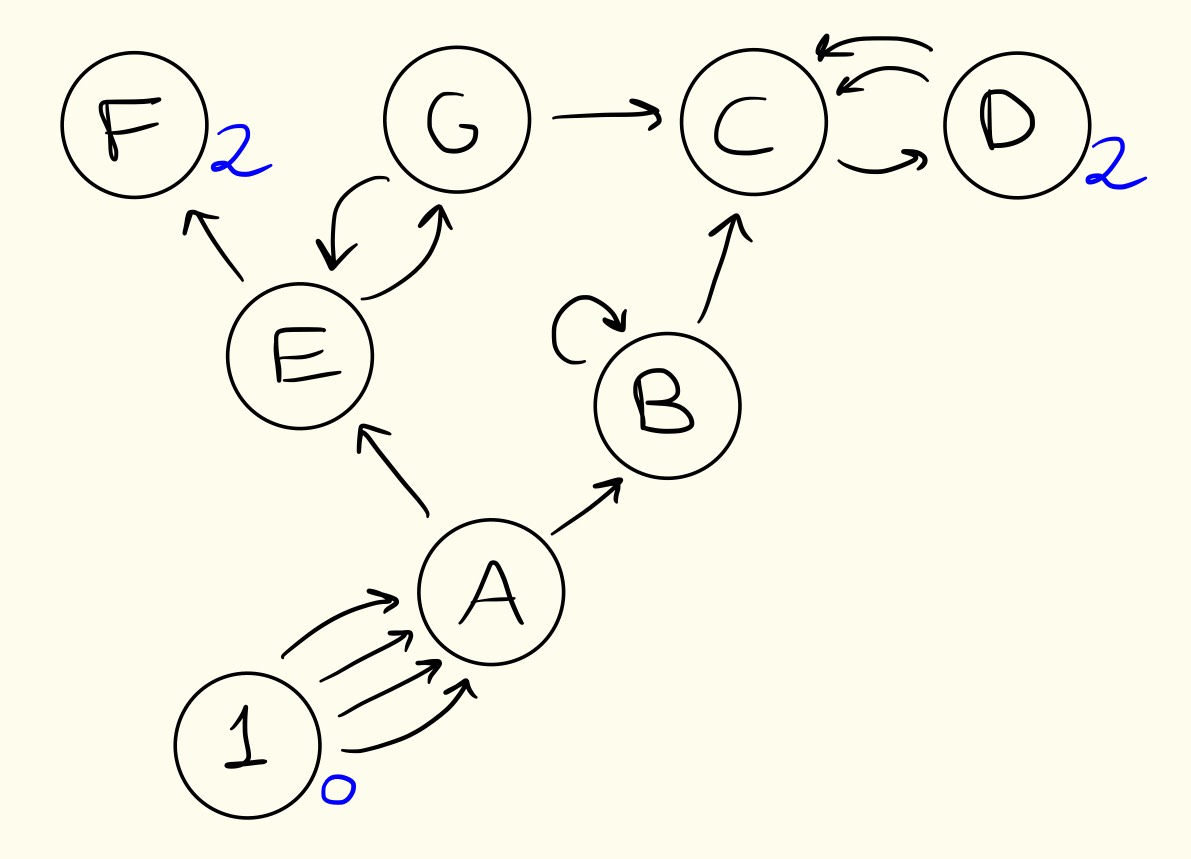}
			\caption{Ext.cone types E and F, and the ext.cone type diagram of $(3.6)^2$}
			\label{fig:4.4.4}
		\end{center}
	\end{figure}
	After solving Cannon's system of equations, we get the spherical growth series:
	$$ \Delta(z) = \dfrac{-z^5+3z^4+6z^3+6z^2+4z+1}{(1-z^2)^2}.$$
	
	\subsection{The $4.8^2$ Tiling}

	Once again, since our graph is bipartite, we do not have to worry about extended cones. We construct lines $(l_k)_{k\in \mathbb{Z}}$ as shown in Figure \ref{fig:4.5.1} to conclude (via similar arguments) that the rays $r_1, r_2, r_3, r_4$ are geodesic rays and hence we obtain the cone with cone type $I1$ (also in Figure \ref{fig:4.5.1}).
	\begin{figure}[h]
		\begin{center}
			\includegraphics[scale=0.127]{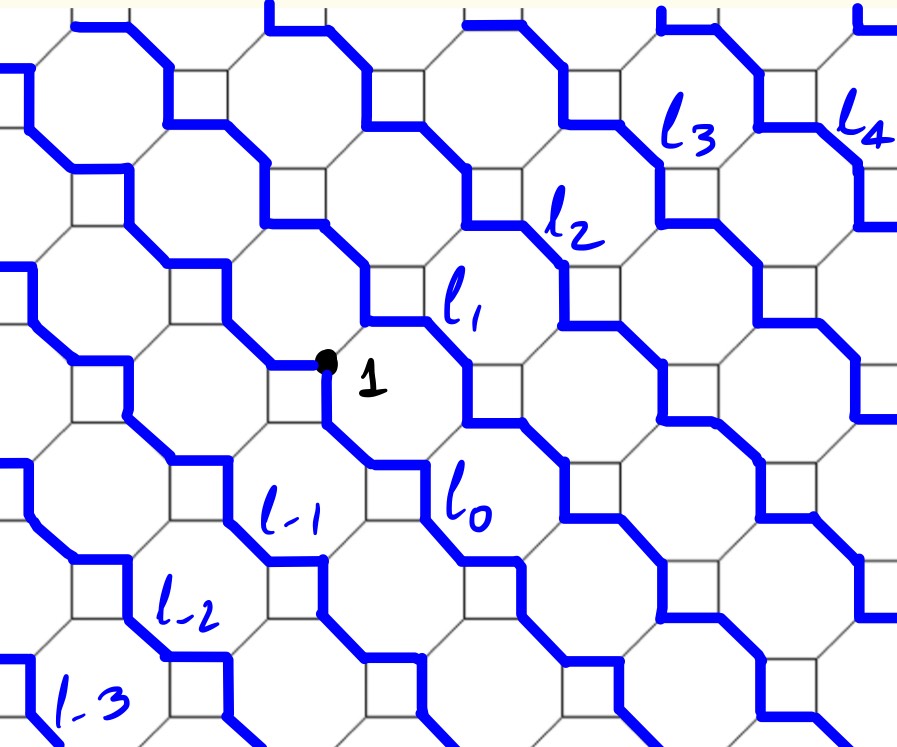}
			\includegraphics[scale=0.127]{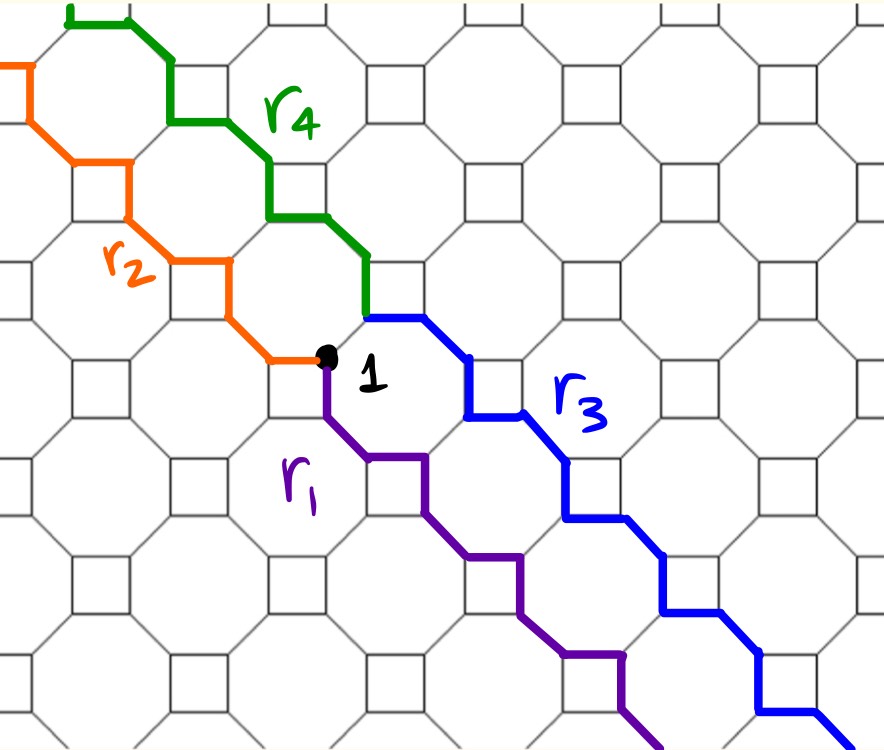}
			\includegraphics[scale=0.127]{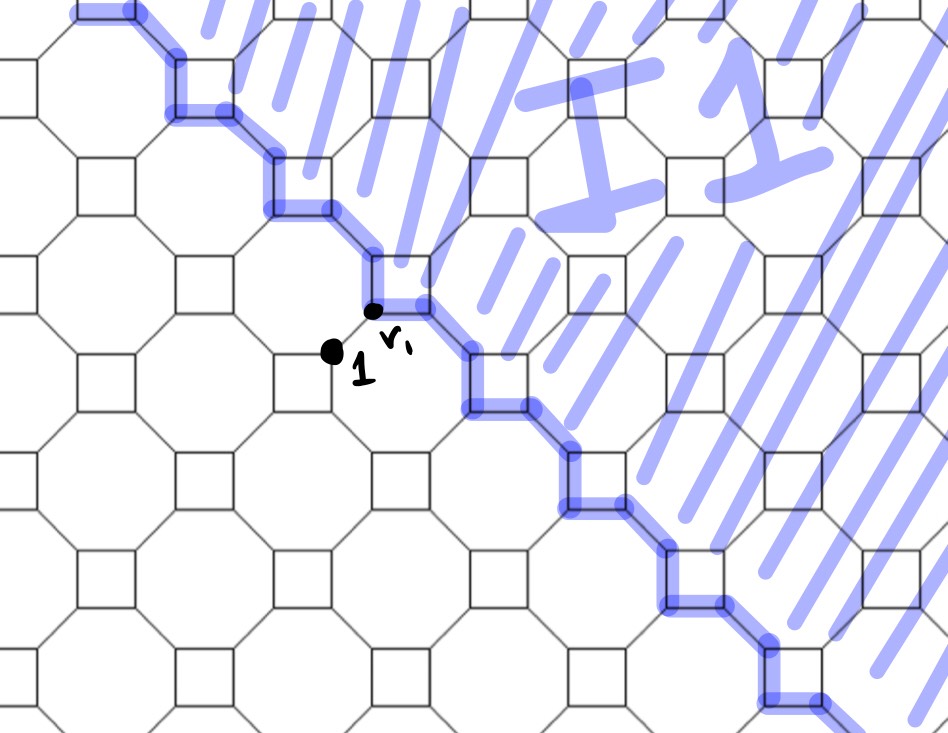}
			\caption{Obtaining cone type $I1$}
			\label{fig:4.5.1}
		\end{center}
	\end{figure}	 
	\begin{figure}[h]
		\begin{center}
			\includegraphics[scale=0.127]{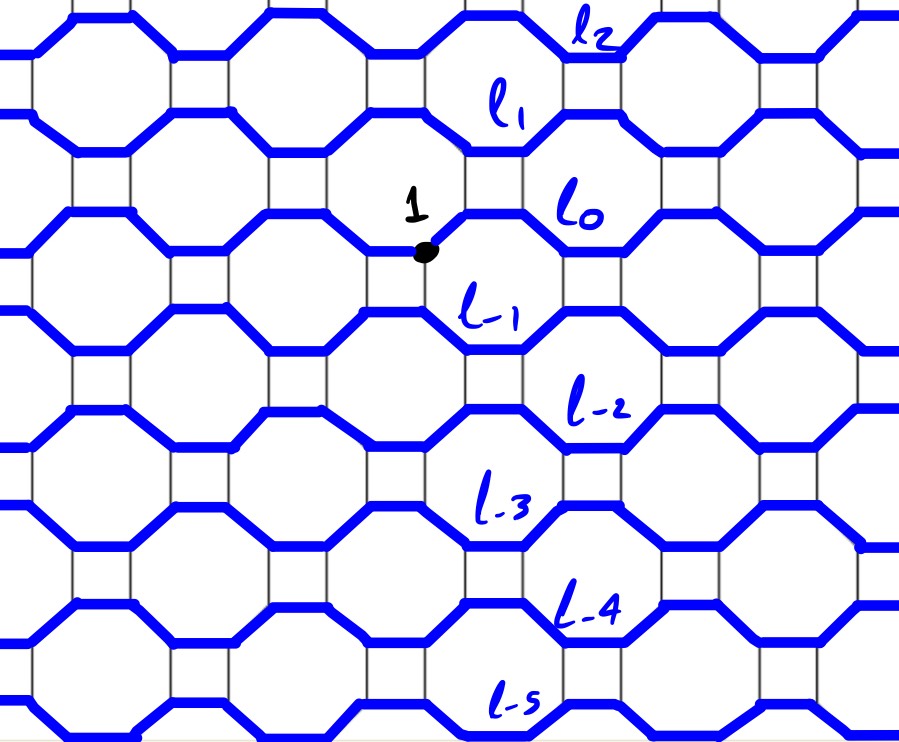}
			\includegraphics[scale=0.127]{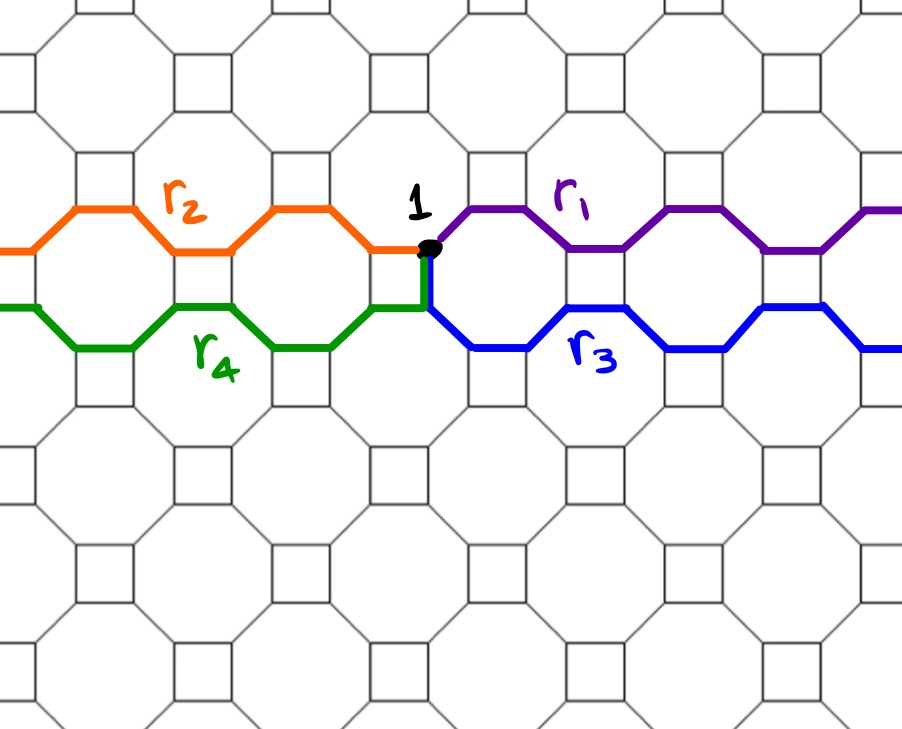}
			\includegraphics[scale=0.127]{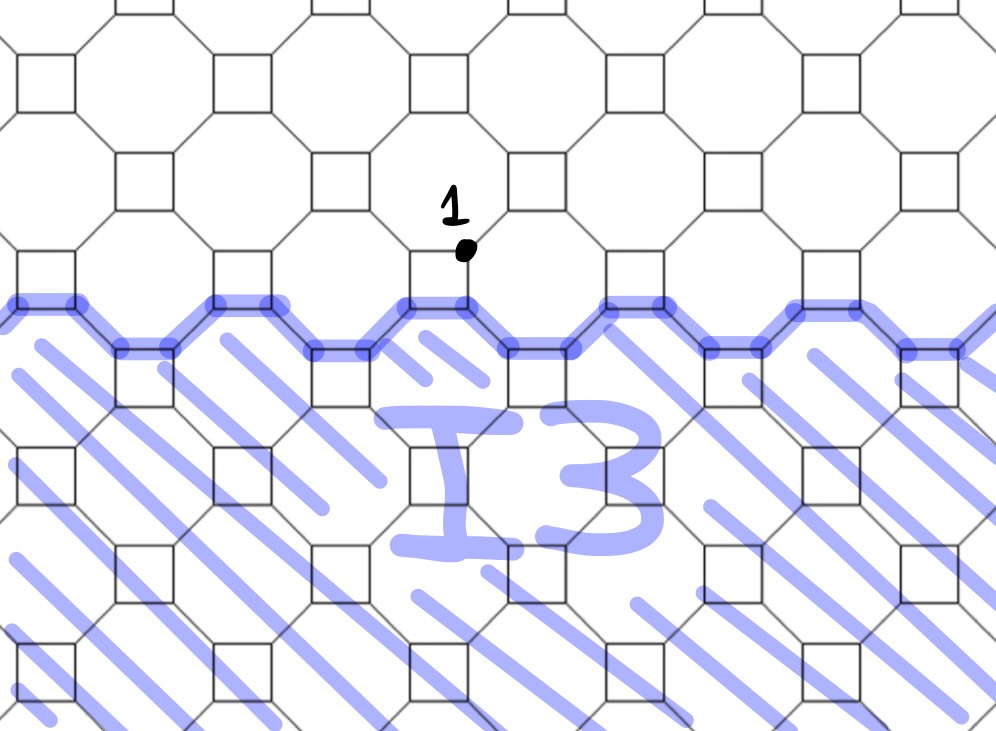}
			\caption{Obtaining cone type $I3$}
			\label{fig:4.5.2}
		\end{center}
	\end{figure}
	Applying the same procedure again we obtain cone type $I3$ (see Figure \ref{fig:4.5.2}), and that way we have found all the cone types at the discrete 1 sphere. Taking translations and intersections of these cones, we find all 14 different (ext.) cone types. The findings can be summarized in the cone type diagram  in Figure \ref{fig:4.5.3}. Using Cannon's system of equations, we arrive at:
	$$ \Delta(z) = \dfrac{(z^2+1)(z+1)^2}{(1-z)^2(1+z+z^2)}.$$	 
	\begin{figure}[h]
		\begin{center}
			\includegraphics[scale=0.16]{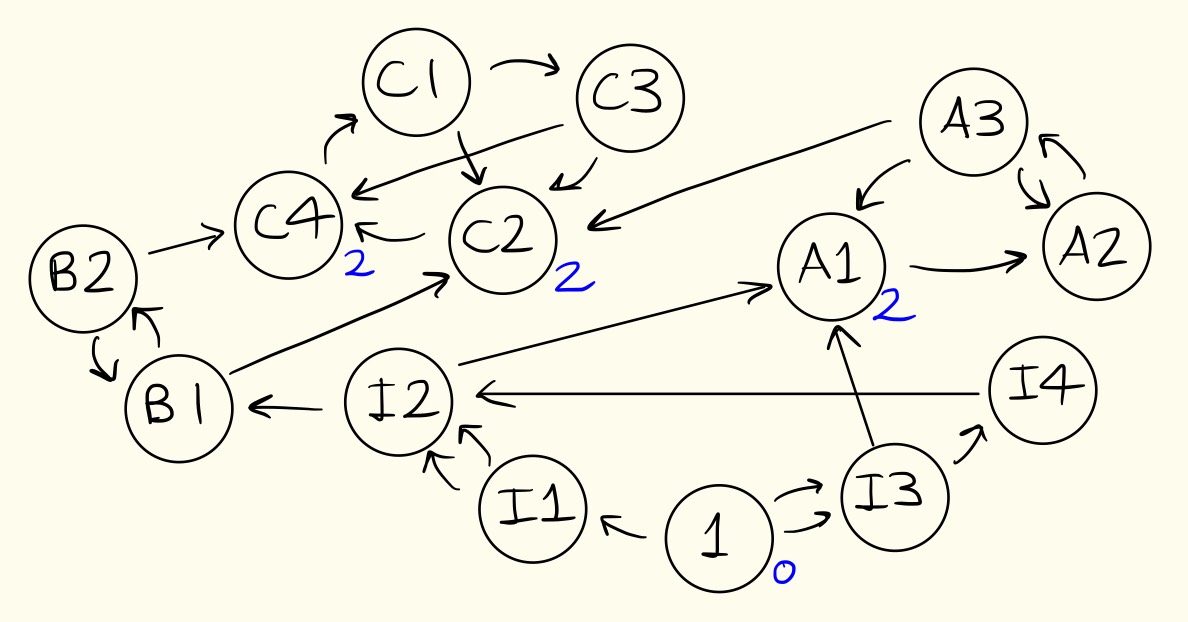}
			\caption{Ext.cone type diagram for $4.8^2$}
			\label{fig:4.5.3}
		\end{center}
	\end{figure}	
	
	\subsection{The $3.12^2$ Tiling}

	Via constructing lines  $(l_k)_{k\in \mathbb{Z}}$ we get geodesic rays $r_1, r_2, r_3, r_4$ which allow us to compute the ext.cones at the 1-sphere, having ext.cone type $I1$ or $I3$ (see Figure \ref{fig:4.6.1}). 
	\begin{figure}[h]
		\begin{center}
			\includegraphics[scale=0.147]{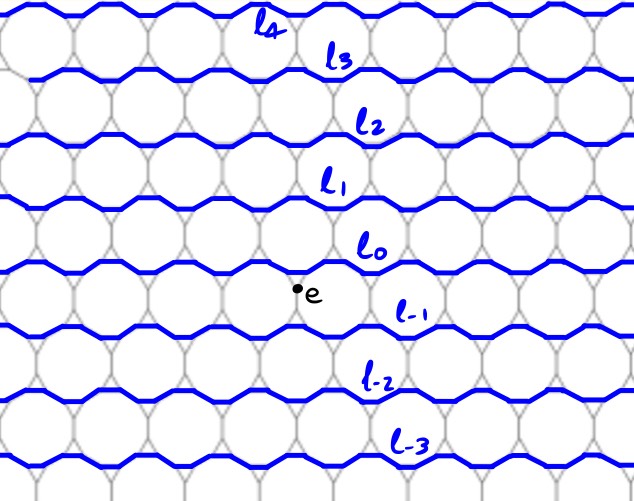}
			\includegraphics[scale=0.147]{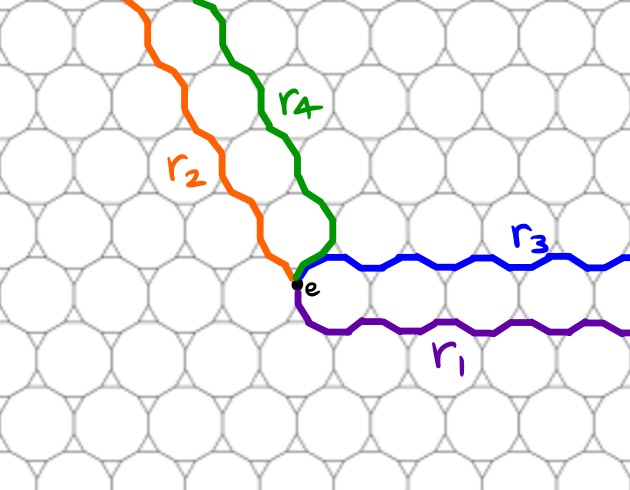}
			\includegraphics[scale=0.147]{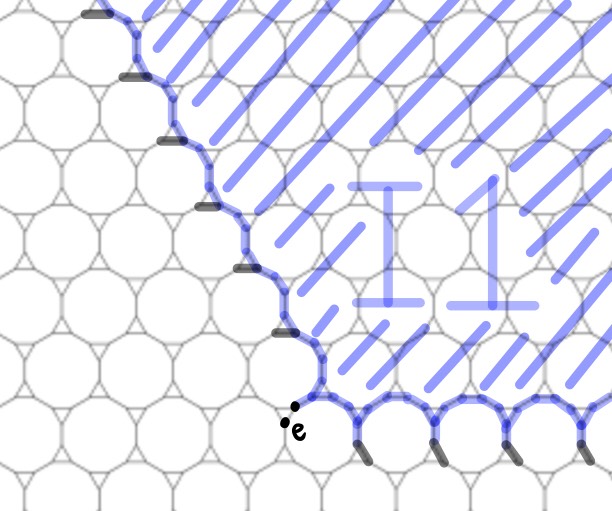}
			\includegraphics[scale=0.147]{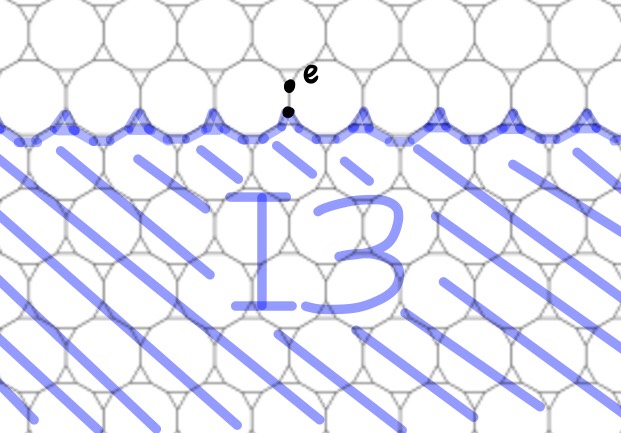}
			\caption{Ext.cone types of the 1-sphere}
			\label{fig:4.6.1}
		\end{center}
	\end{figure}
	Translating and intersecting these cones (Proposition 3.8) while keeping track of the tangent edges added for the extended cones (Proposition 3.9), we manage to find all 28 ext.cone types. 
	\\
	Unfortunately, we cannot display them all, so instead we provide a picture (Figure \ref{fig:4.6.2}) the graph close to the identity where each vertex is colored according to its ext.cone type (the purple colored vertices have ext.cone types $D1,D2,D3,D4$ at the top row and $E1,E2,E3,E4$ at the bottom row). 
	As always, we provide a ext.cone type diagram too (Figure \ref{fig:4.6.2}).
	Cannon's system of equations yields:
	$$ \Delta(z) = \dfrac{-2z^8+4z^7-3z^6+5z^5-z^4+3z^3+z^2+z+1}{(1-z)^2(1+z^2)^2}.$$
	\begin{figure}[h]
		\begin{center}
			\includegraphics[scale=0.137]{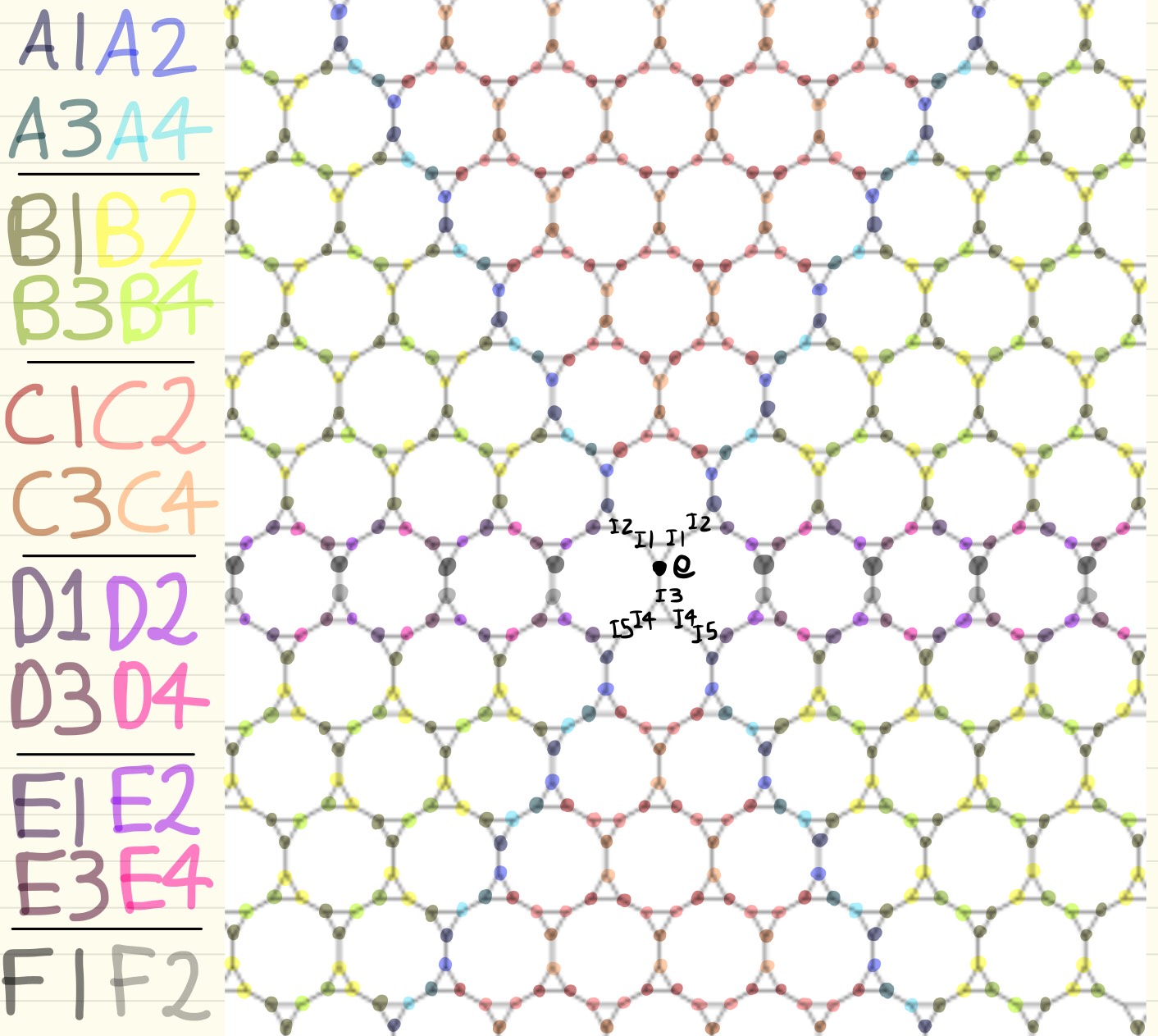}
			\includegraphics[scale=0.137]{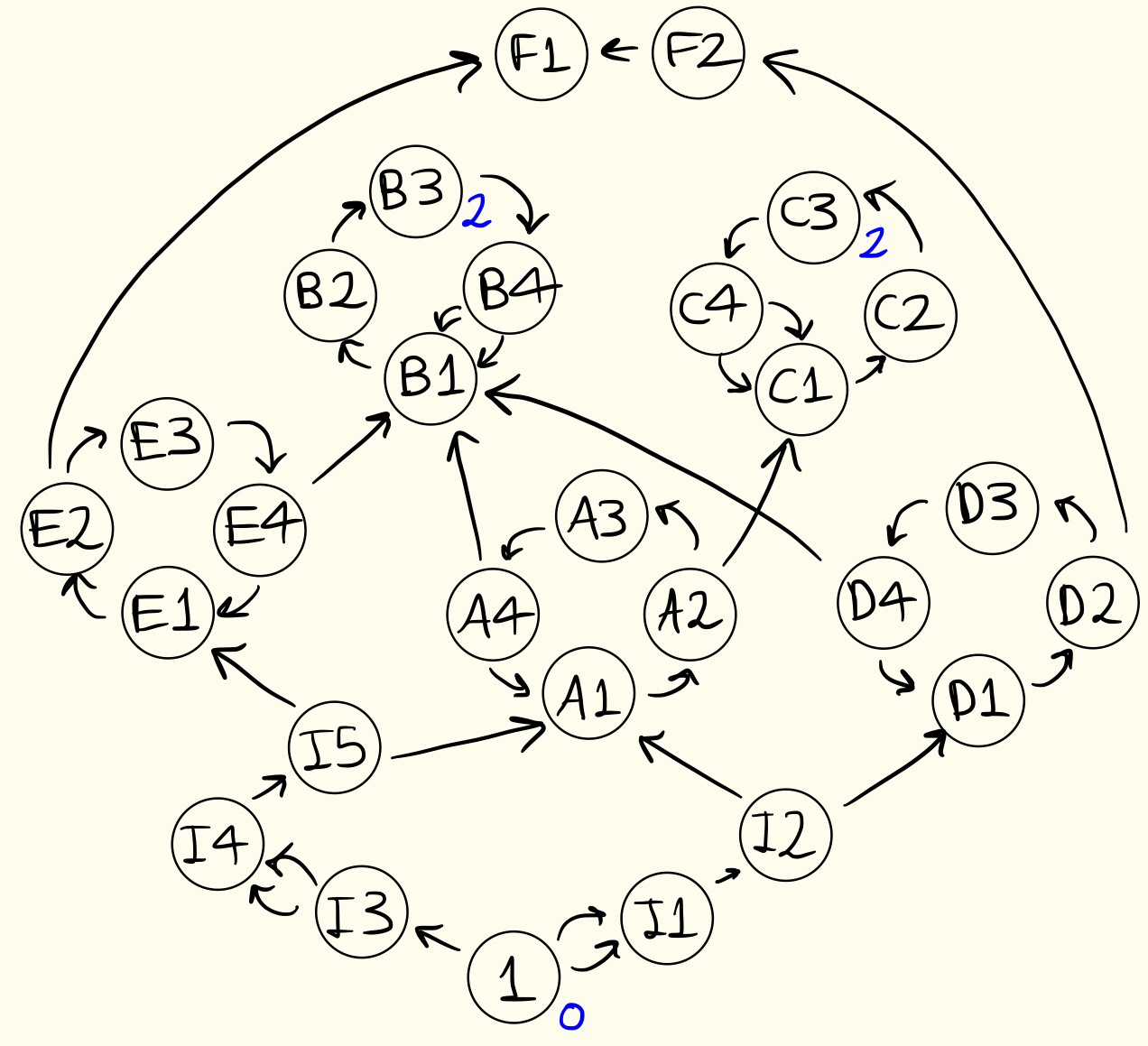}
			\caption{Pretty picture and ext.cone type diagram for $3.12^2$}
			\label{fig:4.6.2}
		\end{center}
	\end{figure}	

	\subsection{The $4.6.12$ Tiling}
	
	This is by far the toughest case, even though this graph is bipartite. In total, we have 42 cone types! Of course we do not display them all. Instead, we display all the cones of the 1-sphere (Figure \ref{fig:4.7.1}), pictures of vertices near the identity labeled according to their cone type (Figure \ref{fig:4.7.2}) and the ext.cone type diagram (Figure \ref{fig:4.7.3}). After solving carefully Cannon's system of linear equations, we obtain:
	$$ \Delta(z) = \dfrac{(z^2+z+1)(z^2-z+1)(z+1)^2}{(1-z^5)(1-z)}.$$
	\begin{figure}[h]
		\begin{center}
			\includegraphics[scale=0.16]{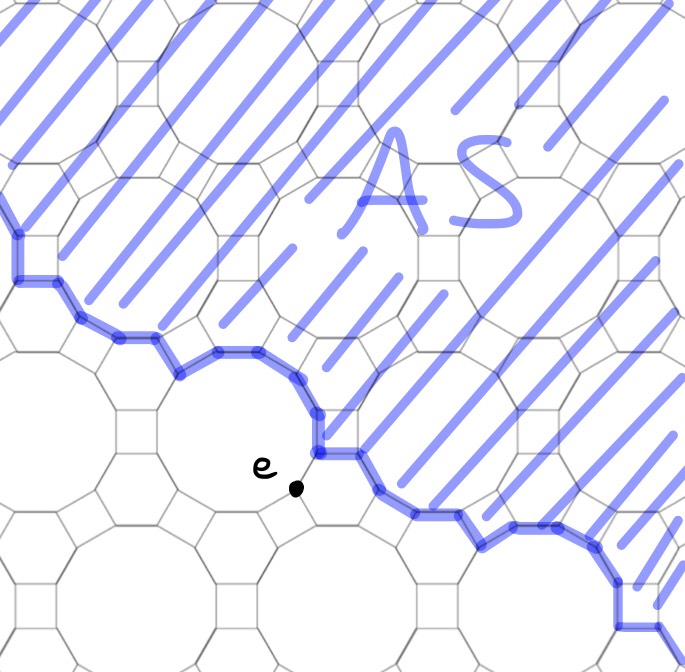}
			\includegraphics[scale=0.16]{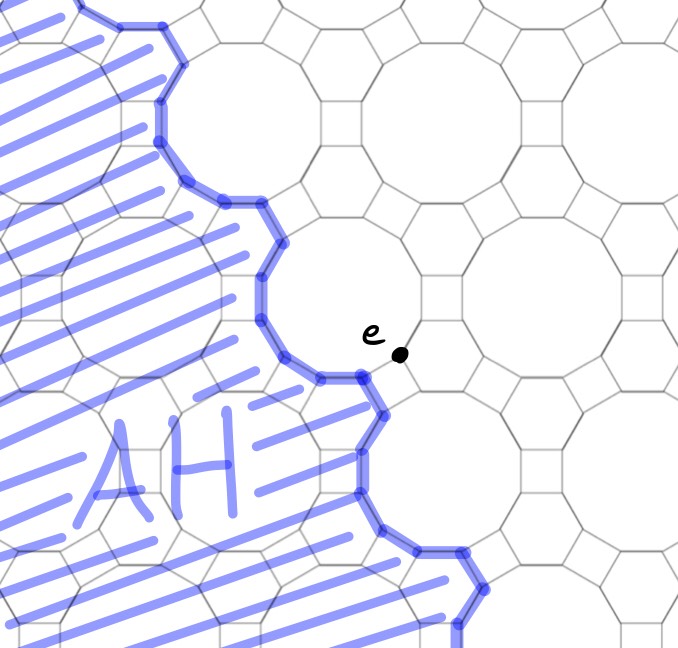}
			\includegraphics[scale=0.16]{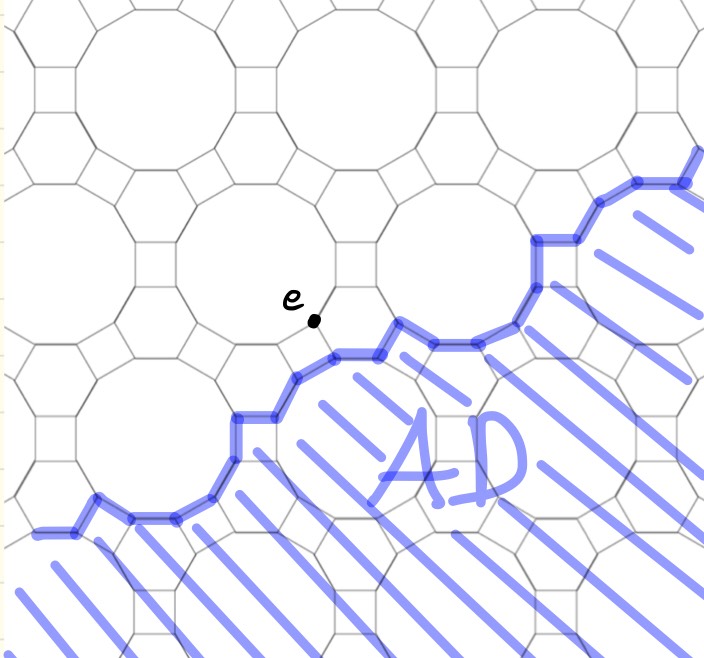}
			\caption{Ext.cone types of the 1-sphere}
			\label{fig:4.7.1}
		\end{center}
	\end{figure}	
	\begin{figure}[h]
		\begin{center}
			\includegraphics[scale=0.15]{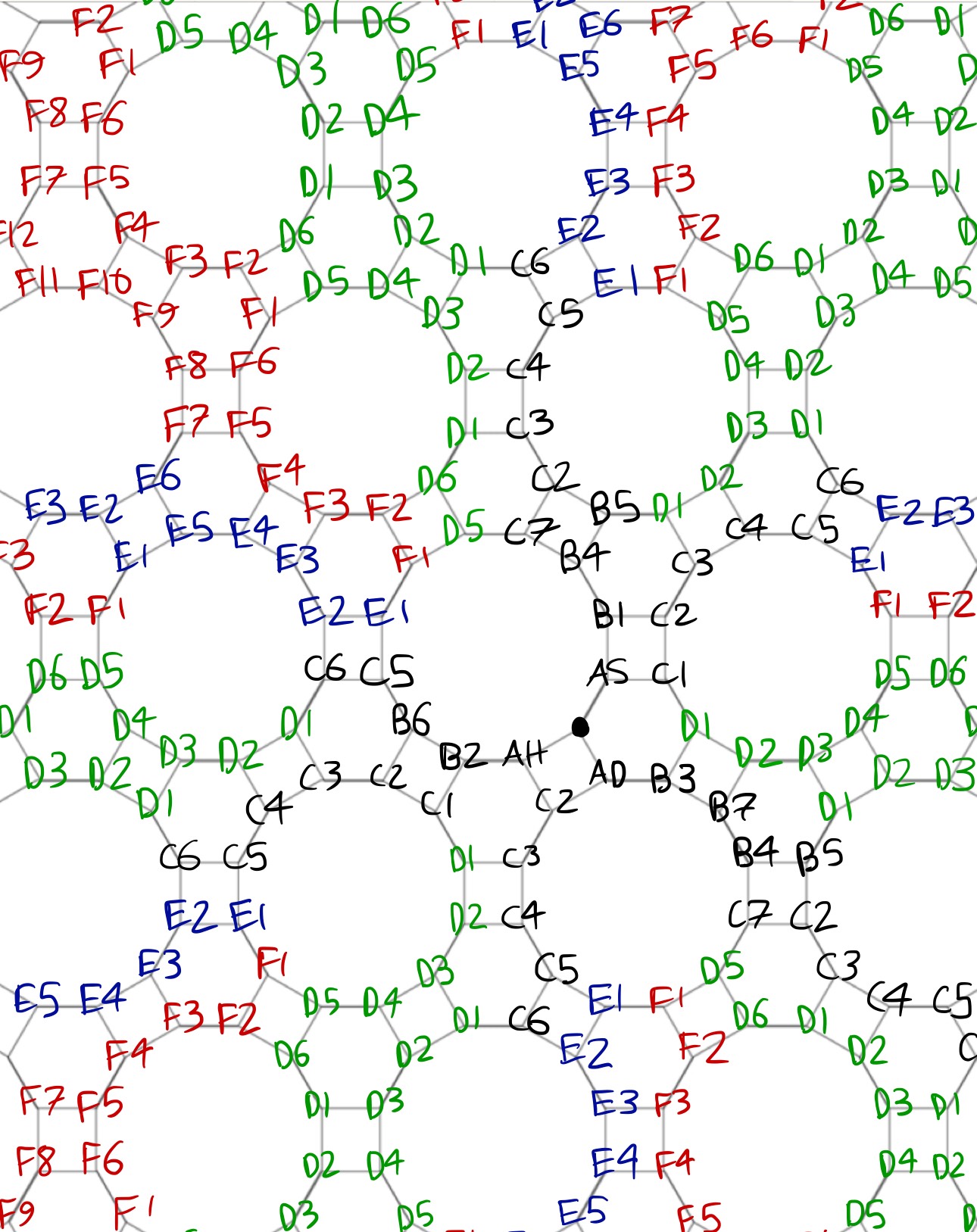}
			\includegraphics[scale=0.15]{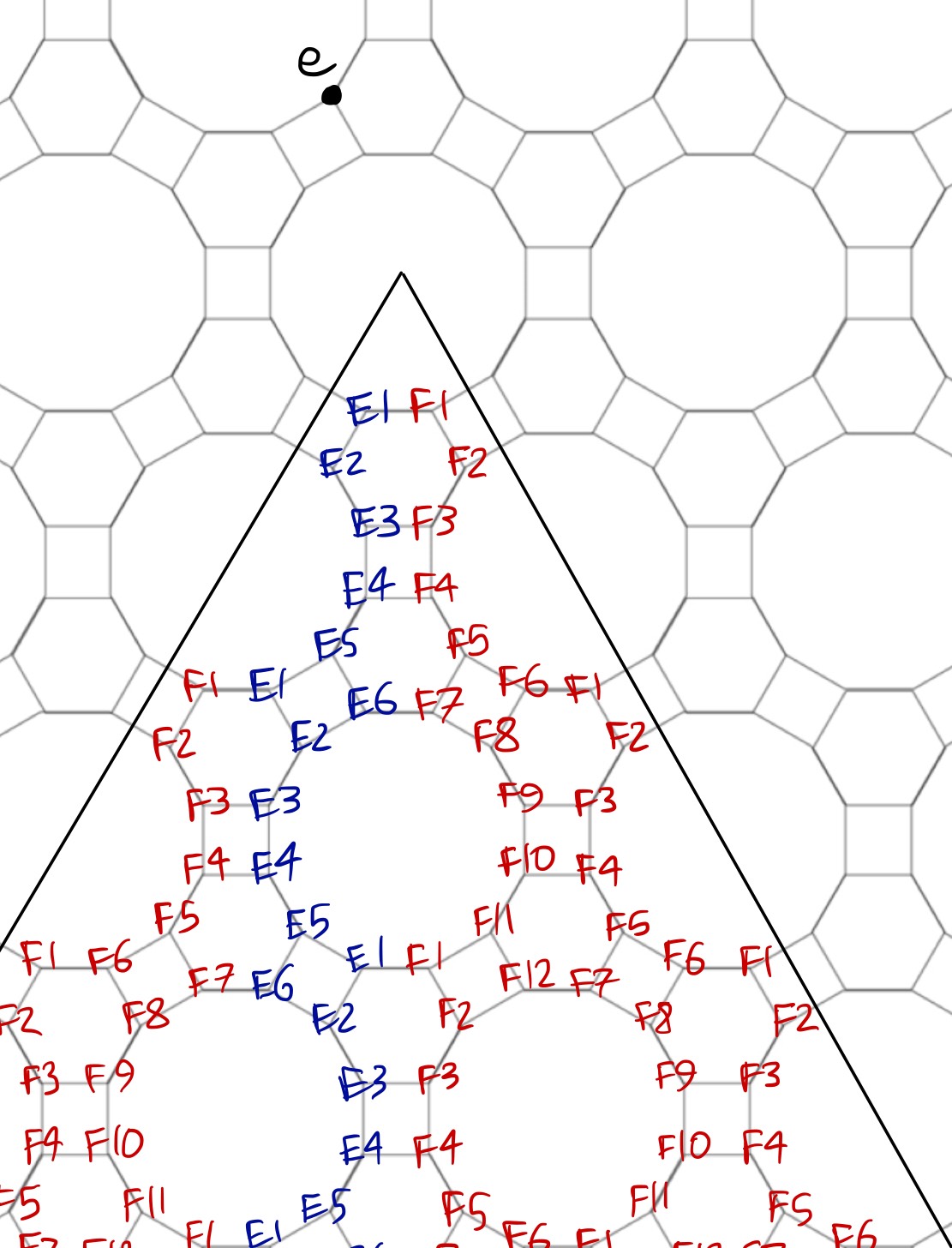}
			\caption{The cone types of vertices near the identity}
			\label{fig:4.7.2}
		\end{center}
	\end{figure}	
	\begin{figure}[h]
		\begin{center}			
			\includegraphics[scale=0.14]{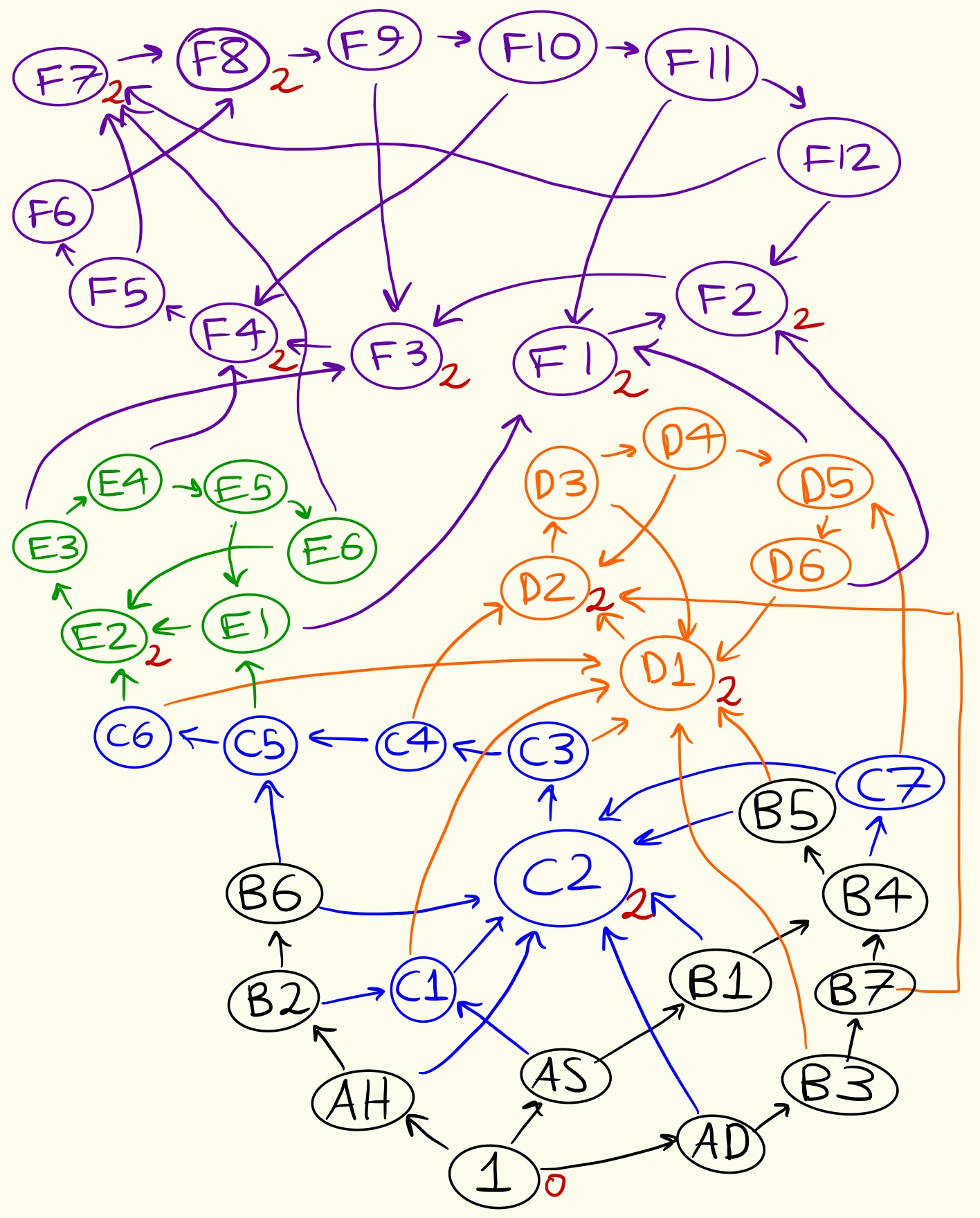}
			\caption{The ext.cone type diagram of $4.6.12$}
			\label{fig:4.7.3}
		\end{center}
	\end{figure}	
	
\section{A Summary of The Results}

	Applying Cannon's method of cone types to each of the 7 non-oriented Cayley graphs,
	we obtain the spherical growth series.
	By taking a partial fraction decomposition of the growth series, 
	we can compute the Taylor expansion of the growth series, i.e. the growth function.
	We take the Taylor expansions of the spherical growth series $\Delta(z)$ 
	and the cumulative growth series $\Gamma(z) = \dfrac{\Delta(z)}{1-z}$.
	We end up with the following tables:\\
	\\ \\ \\ \\ \\ \\ \\
	\\ \\ \\ \\ \\ \\ \\
	\\ \\ \\ \\ \\ \\ \\
	\begin{center}
	\begin{tabular}{ | c | c |}
	\hline
	\hline
	Cayley Graph $\Gamma(G,S)$ & Spherical Growth Series $\Delta(z)$\\
	\hline
	\hline
	$3^6$       & $\dfrac{z^2+4z+1}{(1-z)^2}$ \\
	\hline
	$4^4$       & $\dfrac{(1+z)^2}{(1-z)^2}$  \\
	\hline
	$6^3$       & $\dfrac{z^2+z+1}{(1-z)^2}$ \\
	\hline
	$(3.6)^2$ & $\dfrac{-2z^5+3z^4+6z^3+6z^2+4z+1}{(1-z^2)^2}$ \\
	\hline
	$4.8^2$    & $\dfrac{(z^2+1)(z+1)^2}{(1-z)^2(1+z+z^2)}$ \\
	\hline
	$3.12^2$  & $\dfrac{-2z^8+4z^7-3z^6+5z^5-z^4+3z^3+z^2+z+1}{(1-z)^2(1+z^2)^2}$ \\
	\hline
	$4.6.12$   & $\dfrac{(z^2+z+1)(z^2-z+1)(z+1)^2}{(1-z^5)(1-z)}$ \\
	\hline
	\hline
	\end{tabular}
	\end{center}


\newpage

	\begin{center}
	\begin{tabular}{ | c | c |}
	\hline
	\hline
	Cayley Graph $\Gamma(G,S)$ & Spherical Growth function $\delta(n)$\\
	\hline
	\hline
	$3^6$       &  	$\begin{array}{cc}
					1 & n=0 \\
					6n & n>0\\
				\end{array}$	  		\\
	\hline
	$4^4$       &  	$\begin{array}{cc}
					1 & n=0 \\
					4n & n>0\\
				\end{array}$	  		\\
	\hline
	$6^3$       &  	$\begin{array}{cc}
					1 & n=0 \\
					3n & n>0\\
				\end{array}$			\\
	\hline
	$(3.6)^2$ &  	$\begin{array}{cc}
					1 	& n=0  \\
					4 	& n=1  \\
					5n-2 	& n\equiv0\mod2,n>1\\
					4n+2 	& n\equiv1\mod2,n>1\\
				\end{array}$			\\
	\hline
	$4.8^2$    &	  	$\begin{array}{cc}
					1 				& n=0  \\
					\dfrac{8n}{3} 		& n\equiv0\mod3,n>0\\
					\dfrac{8n+1}{3} 		& n\equiv1\mod3,n>0\\
					\dfrac{8n-1}{3}		& n\equiv2\mod3,n>0\\
				\end{array}$			\\
	\hline
	$3.12^2$  &	  	$\begin{array}{cc}
					1 				& n=0  \\
					3 				& n=1  \\
					4 				& n=2  \\
					\dfrac{5n-4}{4} 		& n\equiv0\mod4,n>2\\
					\dfrac{9n+3}{4} 		& n\equiv1\mod4,n>2\\
					2n+2				& n\equiv2\mod4,n>2\\
					\dfrac{9n-3}{4}		& n\equiv3\mod4,n>2\\
				\end{array}$			\\
	\hline
	$4.6.12$   &	  	$\begin{array}{cc}
					1 				& n=0  \\
					\dfrac{12n}{5} 		& n\equiv0\mod5,n>0  \\
					\dfrac{12n+3}{5} 		& n\equiv1\mod5,n>0\\
					\dfrac{12n+1}{5} 		& n\equiv2\mod5,n>0\\
					\dfrac{12n-1}{5}		& n\equiv3\mod5,n>0\\
					\dfrac{12n-3}{5}		& n\equiv4\mod5,n>0\\
				\end{array}$			\\
	\hline
	\hline
	\end{tabular}
	\end{center}

\newpage

	\begin{center}
	\begin{tabular}{ | c | c |}
	\hline
	\hline
	Cayley Graph $\Gamma(G,S)$ & Cumulative Growth function $\gamma(n)$\\
	\hline
	\hline
	$3^6$       &  	$1+3n(n+1)$	  		\\
	\hline
	$4^4$       &  	$1+2n(n+1)$	  		\\
	\hline
	$6^3$       &  	$1+\dfrac{3}{2}n(n+1)$		\\
	\hline
	$(3.6)^2$ &  	$\begin{array}{cc}
					1 					& n=0  \\
					\dfrac{9n^2+10n-4}{4} 		& n\equiv0\mod2,n>1\\
					\dfrac{9n^2+8n+3}{4}  		& n\equiv1\mod2,n>1\\
				\end{array}$			\\
	\hline
	$4.8^2$    &	  	$\begin{array}{cc}
					\dfrac{3+4n(n+1)}{3} 		& n\equiv0\mod3,n>0\\
					\dfrac{4+4n(n+1)}{3} 		& n\equiv1\mod3,n>0\\
					\dfrac{3+4n(n+1)}{3}		& n\equiv2\mod3,n>0\\
				\end{array}$			\\
	\hline
	$3.12^2$  &	  	$\begin{array}{cc}
					1 					& n=0  \\
					4 					& n=1  \\
					\dfrac{9n^2+10n-8}{8} 		& n\equiv0\mod4,n>2\\
					\dfrac{9n^2+10n-3}{8} 		& n\equiv1\mod4,n>2\\
					\dfrac{9n^2+8n+12}{8}		& n\equiv2\mod4,n>2\\
					\dfrac{9n^2+8n+7}{8}		& n\equiv3\mod4,n>2\\
				\end{array}$			\\
	\hline
	$4.6.12$   &	  	$\begin{array}{cc}
					\dfrac{5+6n(n+1)}{5} 		& n\equiv0\mod5,n>0  \\
					\dfrac{8+6n(n+1)}{5} 		& n\equiv1\mod5,n>0\\
					\dfrac{9+6n(n+1)}{5} 		& n\equiv2\mod5,n>0\\
					\dfrac{8+6n(n+1)}{5}		& n\equiv3\mod5,n>0\\
					\dfrac{5+6n(n+1)}{5}		& n\equiv4\mod5,n>0\\
				\end{array}$			\\
	\hline
	\hline
	\end{tabular}
	\end{center}

\newpage

\bibliographystyle{plain}

\bibliography{bibliography}

\end{document}